\documentclass[12pt]{amsart}
\usepackage{amsrefs}
\usepackage{amsmath,amscd,amssymb}
\usepackage{enumitem}
\usepackage{hyperref}

\newtheorem{theorem}{Theorem}[section]
\newtheorem{prop}[theorem]{Proposition}
\newtheorem{lemma}[theorem]{Lemma}
\newtheorem{cor}[theorem]{Corollary}
\newtheorem{definition}[theorem]{Definition}
\newtheorem{example}[theorem]{Example}

\newtheorem{remark}[theorem]{Remark}

\numberwithin{equation}{section}
\newtheorem*{theorem*}{Theorem}
\newcommand{\bb}[1]{\mathbb{#1}}
\newcommand{\cl}[1]{\mathcal{#1}}

\newcommand{\bs}[1]{\boldsymbol{#1}}

\begin{document}
\title[doubly twisted near-isometries]{Doubly twisted near-isometries: Classification and a Wold-type decomposition}

\author{Sneh Lata}
\address{Department Of Mathematics\\
         School of Natural Sciences\\
        Shiv Nadar Institution Of Eminence\\
        Gautam Buddha Nagar - 201314\\
         Uttar Pradesh, India}
\email{sneh.lata@snu.edu.in}

\author{Santosh Singh Negi}
\address{Department Of Mathematics\\
         School of Natural Sciences\\
        Shiv Nadar Institution Of Eminence\\
        Gautam Buddha Nagar - 201314\\
         Uttar Pradesh, India}
\email{sn210@snu.edu.in}

\author{ Dinesh Singh}
\address{Centre For Digital Sciences\\
      O. P. Jindal Global University\\
   Sonipat\\
        Haryana 131001, India}
\email{dineshsingh1@gmail.com}

\subjclass{47A13, 47B37, 46E40, 30H10}

\keywords{Near-isometry, Isometry, Wold decomposition, Wandering subspaces, Hardy spaces, Shift operators.}
\date{}

\maketitle
\begin{abstract}
We introduce and study doubly twisted near-isometries. A doubly twisted near-isometry is a tuple of near-isometries satisfying certain relations determined by a prescribed family of unitaries, thereby generalizing the notion of doubly commuting near-isometries. We establish necessary and sufficient conditions for a tuple of near-isometries to admit a Wold-type decomposition and prove that the existence of such a decomposition automatically ensures its uniqueness by providing an explicit description of the summands. Furthermore, we show that every doubly twisted near-isometry admits a Wold-type decomposition. We also characterize unitary equivalence within the class of doubly twisted near-isometries and construct an analytic model for them. Several examples are included to highlight the distinctions between our results and the corresponding results in the setting of doubly twisted isometries.  
\end{abstract}

\section{Introduction} 
Research related to Wold-type decompositions of operators occupies a central position in operator theory. Originating in the classical work of Wold in the context of stochastic processes, the decomposition became a cornerstone of operator and function theory after the significance of Wold’s result was realized through the work of Halmos on invariant subspaces of the shift operator of infinite multiplicity. We refer to \cite{Hal, Hof}. The Wold decomposition gives a canonical orthogonal splitting of an isometry $T$ on a Hilbert space $\cl H$ into its unitary and pure (shift) parts: 
$$ 
\cl H=\bigoplus_{n=0}^\infty T^n(kerT^*)\bigoplus \bigcap_{n=0}^\infty T^n\cl H.
$$

Note that the subspaces $\cl H_1=\bigoplus\limits_{n= 0}^\infty T^n(kerT^*)$ and $\cl H_2=\bigcap\limits_{n=0}^\infty T^n\cl H$ reduce $T, $ and the restriction of $T$ to $\cl H_2$ is a unitary, and therefore called the unitary part of $T$; whereas the restriction of $T$ to $\cl H_1$ is called its shift part as it is unitarily equivalent to the unilateral shift acting on a vector-valued Hardy space, yielding a concrete functional model for the operator. This structural result is fundamental not only for the analysis of single operators but also for the construction of analytic models and the development of dilation theory. In particular, by way of illustration, we refer to 
\cite{Hal, LMS, lata2018subhardy, Nag, ST, SS} 

Motivated by the wide-ranging impact of the Wold decomposition for isometries, the first and third authors of this paper introduced the notion of a near-isometry in \cite{lata2018subhardy}. A Wold-type decomposition for a near-isometry was obtained implicitly in \cite{lata2018subhardy} and was later established explicitly in \cite{lata2022multivariable}, joint work with Pokhriyal. Although near-isometries relax the strict isometric condition, they retain sufficient structure to admit a meaningful decomposition theory. In Section \ref{model-near}, we use this decomposition, as in the case of an isometry, to derive an analytic model for a near-isometry. To the 
best of our knowledge, this is the first such analytic model. 

S\l{}oci\'nski \cite{Slo} obtained a two-variable analog of the Wold decomposition for doubly commuting isometries. A pair $(T_1, T_2)$ of operators is said to be doubly commuting if $T_1^*T_2=T_2T_1^*$. Every doubly commuting pair of isometries is commuting; however, the converse does not hold in general. In the same paper, he constructed an example of commuting isometries that fail to admit such a decomposition, thereby highlighting the strength of the doubly commutativity. This decomposition was subsequently extended to the $n$-variable setting of doubly commuting isometries by Sarkar \cite{Sar}, and to doubly commuting near-isometries by Lata, Pokhriyal, and Singh \cite{lata2022multivariable}. These multivariable analogues of the Wold decomposition play a key role in the study of invariant subspaces of Hardy spaces over the $n$-torus in both the classical and the de Branges settings \cite{lata2022multivariable, Man, Singh, ST}.  

In recent years, there has been growing interest in doubly twisted variants of tuples of doubly commuting isometries \cite{KRT, Mansi, jaydebmansirakshit2022, sarkar2024orthogonal, pinto}, which generalize the notion of double commutativity in a nontrivial manner. A tuple of isometries $(T_1, \dots, T_n)$ (with $n\ge 2$) on a Hilbert space is said to be a doubly twisted isometry with respect to a commuting family of unitaries $\{U_{ij}: 1\le i<j\le n\}$ if each $T_i$ commutes with every 
$U_{jk}$ and $T_i^*T_j=U_{ij}^*T_jT_i$ for all $1\le i<j\le n$. In a sequence of papers \cite{jaydebmansirakshit2022,sarkar2024orthogonal}, Rakshit, Sarkar, and 
Suryawanshi investigated a Wold-type decomposition for doubly twisted isometries and established a structural and analytic model theory in the strictly isometric setting. Their results demonstrate that the twisted commutation framework is robust enough to support a canonical decomposition and a functional realization. The special case when the unitaries $U_{ij}$ are simply scalar multiples of the identity was treated in \cite{pinto}. More recently, Suryawanshi 
and Solel \cite{Mansi} extended the structural and analytic results of \cite{jaydebmansirakshit2022, sarkar2024orthogonal} to multivariable isometric covariant representations 
associated with product systems of $C^*$-correspondences. 

Motivated by these developments, the present work extends this circle of ideas to the broader class of doubly twisted near-isometries. We demonstrate that the Wold paradigm persists beyond the isometric regime. One of the notable reason for our claim to novelty is that we derive a Wold-type decomposition for this broader class and use it to construct an analytic model for them, identifying the structural mechanisms that compensate for the absence of exact isometry. In doing so, we generalize and refine the known results for doubly twisted isometries \cite{jaydebmansirakshit2022,sarkar2024orthogonal}. 
 
During the preparation of this manuscript, we became aware of \cite{KRT}, where near-isometries are introduced in the context of product systems of $C^*$-correspondences and a Wold-type decomposition is obtained for near-isometric covariant representations, extending the isometric framework of \cite{Mansi} to the near-isometric setting. While \cite{KRT} establishes a Wold-type decomposition in the covariant representation framework, we additionally provide necessary and sufficient conditions for an $n$-tuple of near-isometries to admit such a decomposition. Our results further develop structural features—such as an analytic model and a notion of unitary equivalence—within an explicit operator-theoretic setting. We note that the proof in \cite{KRT} and one of our proofs of the Wold-type decomposition both rely on an induction argument. However, we also present an alternative proof of this decomposition for doubly twisted near-isometries that employs a substantially different set of techniques.

\section{A brief preview}
The rest of the paper is organized as follows: Section \ref{notation} gives notations, basic terminologies, and some preliminary results on near-isometries. 
Section \ref{model-near} develops an analytic model for a near-isometry. Section \ref{char-Wold} introduces the definition of a Wold-type 
decomposition of a tuple of near-isometries and provides necessary and sufficient conditions for an $n$-tuple of near-isometries to admit such a decomposition (Theorem \ref{1 th equivalent condition for wold}). 
Moreover, our proof of Theorem \ref{1 th equivalent condition for wold} yields an explicit description of the summands appearing in the decomposition, which in turn leads to 
the uniqueness of the decomposition (Remark \ref{1 remark unique rep of Wold for n-tuple of near-isometry}). Section \ref{doubly-ex} introduces the notion of a doubly twisted near-isometry and provides an example (Example \ref{1 ex commuting case in near-isometry}) to illustrate the necessity of the additional condition imposed in the definition, as compared with the notion of a doubly twisted isometry. Theorem \ref{doubly-ex1} provides a construction of a doubly twisted near-isometry on a vector-valued Hardy space. Section \ref{doubly-twisted} proves that every doubly-twisted near-isometry admits a Wold-type decomposition (Theorem \ref{1 th wold decom for dtni by induction}). The same section (Remark \ref{cor-isometry}) shows that the Wold-type decomposition for a doubly twisted isometry \cite[Theorem 3.6]{jaydebmansirakshit2022} follows from Theorem \ref{1 th wold decom for dtni by induction} as a special case. 
Section \ref{alternate} contains an alternative proof (Theorem \ref{1th wold decompostion dtni by SOT}) of the Wold-type decomposition of a doubly twisted near-isometry, relying on arguments involving orthogonal projections, in contrast to the proof in Section \ref{doubly-twisted}, which is based on an induction argument. Section \ref{uni-equi} characterizes unitary equivalence within the class of doubly twisted near-isometries. Furthermore, we show that unitary equivalence considered in \cite[Theorem 5.2]{jaydebmansirakshit2022} follows from our equivalence. Through Example \ref{ex-wand}, we demonstrate that the conditions in \cite{jaydebmansirakshit2022} are not sufficient in the broader setting of near-isometries, thereby underscoring the necessity of our theorem. Lastly, Section \ref{doubly-ana} constructs an analytic model for a doubly twisted near-isometry (Theorem \ref{1 th unitary eq of dtni on h}).

\section{Notations and prelimaries} \label{notation}
Let $\bb N$ and $\bb N_0$ denote, respectively, the set of positive and non-negative integers, and let $\bb N_0^n$ be the $n$-fold Cartesian product of $\bb N_0$. To distinguish elements of $\bb N_0^n$ from those of $\bb N_0$, we shall denote them by bold letters such as $\bs k.$ Open unit disc in the complex plane $\bb C$ is denote by $\bb D$, and let $H^2(\bb D)$ denote the Hardy space over $\bb D$. Given $z=(z_1, \dots, z_n)\in \bb C^n$ and ${\bs k} \in \bb N_0^n$, we write $z^{\bs k}$ for $z_1^{k_1}\cdots z_n^{k_n}.$ We shall use  $z^{\bs k}$ interchangeably to denote the complex number $z_1^{k_1}\cdots z_n^{k_n}$ and the function $z\mapsto z^{\bs k},$ the intended meaning will always be clear from the context.  

Recall that the set $\{z^{\bs k}: \bs k\in \bb N_0^n\}$ forms an orthonormal basis for $H^2(\bb D^n).$ Given a Hilbert space $\cl H$, $H^2_{\cl H}(\bb D^n)$ denote the $\cl H$-valued Hardy space over the polydisc $\bb D^n.$ All Hilbert spaces we consider are assumed to be separable. If $\{h_j:j\in \bb N_0\}$ is an orthonormal basis for $\cl H$, then $\{h_jz^{\bs k}: j\in \bb N_0, \bs k\in \bb N_0^n\}$ is an orthonormal basis for $H^2_{\cl H}(\bb D^n).$ Throughout this paper, we shall frequently identify $H^2_{\cl H}(\bb D^n)$ with the Hilbert space tensor product $H^2(\bb D^n)\otimes \cl H$ of $H^2(\bb D^n)$ and $\cl H$. Indeed, the natural map $h_jz^{\bs k}\mapsto z^{\bs k}\otimes h_j$ extends to a unitary from $H^2_{\cl H}(\bb D^n)$ onto $H^2(\bb D^n)\otimes \cl H$. We note that for $\cl H=\bb C$, the Hilbert spaces $\cl H^2_{\cl H}(\bb D^n)$ is simply the Hardy space $H^2(\bb D^n)$ over the polydisc. 

Let $B(\cl H)$ denote the set of all bounded linear operators on $\cl H$; throughout, we shall refer to bounded linear operators simply as operators. One of the most fundamental operators on $H^2(\bb D)$ is $M_z$, the operator of multiplication by the coordinate function $z$, commonly known as the (forward) shift. The shift operator on the vector-valued Hardy space $H^2_{\cl H}(\bb D)$ is given by $M_z \otimes I_{\cl H}$, acting as $z^n\otimes \eta\mapsto z^{n+1}\otimes \eta.$ 
As noted in the introduction, the Wold decomposition of an isometry asserts that the shift part of an isometry is unitarily equivalent to shift on a vector-valued Hardy space. More precisely, 
the shift part of an isometry $T$ is unitarily equivalent to $M_z\otimes I_{kerT^*}$, where the unitary operator is determined by 
$$
 T^n(\eta)\mapsto z^n \otimes \eta, \ \ \ \eta \in kerT^*, \ n\ge 0.
 $$ 

The analogous notion on $H^2(\bb D^n)$ is the tuple $(M_{z_1}, \dots, M_{z_n})$ of multiplication operators, where each $M_{z_i}$ denotes the multiplication by the 
coordinate function $z_i$, that is, $z=(z_1,\dots, z_n)\mapsto z_i$. In analogy with the analytic model for a pure isometry described above, these multiplication 
operators play a central role in an analytic model for a doubly twisted isometry (see \cite{jaydebmansirakshit2022}). In the present paper, we show that these operators continue to be instrumental in obtaining an analytic model for a near-isometry and a doubly twisted near-isometry.

We shall now fix some notations related to tuples of operators. Let $(T_1, \dots, T_n)$ be a tuple of operators on a 
Hilbert space $\cl H$. We fix $I_n$ for the set $\{1, \dots, n\}$, and for any set $A\subseteq I_n$, we define $T_A=(T_{a_1}, \dots, T_{a_m})$ and $T_A^{\bs k} = T_{a_1}^{k_1}\cdots T_{a_m}^{k_m}$, where 
$A=\{a_1, \dots, a_m\}$ and $\bs k=(k_1, \dots, k_m)\in \bb N_0^{m}$. For notational convenience, we shall always assume that the elements of $A$, in the representation of $T_A$ and $T_A^{\bs k}$, are taken in the increasing order, that is, $a_1<\dots<a_m$. Given any positive integer $m$ and $1\le i\le m$, let $e_i\in \bb N_0^{m}$ denotes the $m$-tuple that is $1$ in the $i^{th}$ coordinate and zero else where. Then, for $\bs k\in \bb N_0^{m}$ with $k_i\ge 1$, we have $\bs k-e_i\in \bb N_0^m$ and  
$z^{\bs k -e_i}=z_1^{k_1}\cdots z_i^{k_i-1}\cdots z_m^{k_m}$ for any $z\in \bb C^m.$  

Finally, we give some essential definitions and basic results on near-isometries. 

 \begin{definition}
 A closed subspace $\mathcal{W} \subseteq \cl{H}$ is said to be a wandering subspace for an operator $T\in B(\cl H)$ if $T^i \mathcal{W} \perp T^j \mathcal{W}$ 
 for all non-negative integers $i,j$ with $i \neq j$.
 \end{definition}
 
\begin{definition}
 An operator $T\in B (\mathcal{H})$ is said to be a shift operator on the Hilbert space $\mathcal{H}$ if there exists a wandering subspace $\mathcal{W}$ for $T$ such that 
\begin{equation*}
\mathcal{H}=\bigoplus_{n=0}^{\infty} T^{n}\mathcal{W}.  
\end{equation*}   
\end{definition}

 \begin{definition}
An operator $T\in B(\mathcal{H})$ is said to be a near-isometry if it satisfies the following two conditions:
\begin{enumerate}
\renewcommand{\labelenumi}{(\roman{enumi})}
\item There exists a constant $\delta> 0$ such that $\delta\|x\|\leq \|Tx\|\leq \|x\|$ for all $x\in \mathcal{H}.$
\item For each  $n\geq 0, \ T^{*n}T^{n+1}\mathcal{H}\subseteq T\mathcal{H}$.
\end{enumerate} 
 \end{definition}
 
Condition $(ii)$ in the definition of a near-isometry, namely, $T^{*n}T^{n+1}\mathcal{H}\subseteq T\mathcal{H}$ is equivalent to $T^n(kerT^*)\perp T^{n+1}\cl H$. 
On occasion, we shall use this equivalent formulation of condition $(ii)$. 

The restriction of a near-isometry to an invariant subspace need not remain a near-isometry. We illustrate this with the following example. 

\begin{example}
Let $\cl B$ denote the Bergman space over the open unit disc $\bb D$ in the complex plane. Recall that it consists of holomorphic functions $f(z)=\sum\limits_{n=0}^\infty a_nz^n$ on $\bb D$ such that $\sum\limits_{n=0}^\infty \frac{|a_n|^2}{n+1}<\infty$, and $\left\{\sqrt{n+1}z^n: n\ge 0\right\}$ forms its orthonormal basis.  

It is fairly straight forward to check that $M_z$ is a bounded below contraction on $\cl B.$ Note that $ker(M_z^*)$ is the one-dimensional subspace spanned by the constant function $1$; therefore,  $M_z^n(kerM_z^*)\in span\{z^n\}.$ Further, $\{\sqrt{k+1}z^k: k\ge n+1\}$ forms an orthonormal basis for $M_z^{n+1}\cl B$, which implies that $M_z^n(kerM_z^*)\perp M_z^{n+1}\cl B$. Hence, $M_z$ is a near-isometry on $\cl B.$ 
 
Let $\cl M=\{f\in \cl B: f(1/2)=0\}$. Then $\cl M$ is a closed subspace of $\cl B$. Indeed, $\cl M$ is the orthogonal complement of the one-dimensional subspace of 
$\cl B$ spanned by $k_{\frac{1}{2}}$, the kernel function of $\cl B$ at the point $1/2$. Clearly, $\cl M$ is invariant under $M_z$. For notational sake, let us use $T$ for $M_z$ restricted to $\cl M$. We shall show that $T^{*}T^2(\cl M)\nsubseteq T\cl M.$ Let $f(z)=z-1/2$. Then $T^2(f)=z^3-z^2/2.$  

Now, $T^*(z^3-z^2/2)=P_{\cl M}M_z^*(z^3-z^2/2)=P_{\cl M}(3z^2/4-z/3)$, where $P_{\cl M}$ is the projection onto $\cl M.$ But, 
$P_{\cl M}=I-\langle{\cdot, \widehat{k}_{\frac{1}{2}}}\rangle \widehat{k}_{\frac{1}{2}}$ for $\widehat{k}_{\frac{1}{2}}=k_{\frac{1}{2}}/||k_{\frac{1}{2}}||$. Recall that $k_{\frac{1}{2}}(z)=\frac{1}{(1-\frac{1}{2}z)^2}$. Then, 
 $$
T^*(z^3-z^2/2) = \frac{3}{4}z^2-\frac{1}{3}z-\frac{1}{64}\widehat{k}_{1/2}.
 $$
This imples that $T^*T^2(f)(0)\ne 0$, as $k_{\frac{1}{2}}(0)\ne 0.$ Therefore, $T^*T^2(f)\notin z\cl M=T\cl M$, which establishes that $T=M_z|_ \cl M$ is not a near-isometry.
\end{example}

In contrast to the invariant case, the near-isometric structure is preserved under restriction to a reducing subspace. 

\begin{lemma}\label{1 shift on reducing subspace}
Let $T\in B(\mathcal{H})$ be a near-isometry and let  $\mathcal{M}$ be a reducing subspace of $T$. Then: 
\begin{enumerate}
\item[(i)] $T$ restricted to $\cl M$ is a near-isometry.
\item[(ii)] $T$ is a shift operator if and only if $T|_{\mathcal{M}}$ and $T|_{\mathcal{M}^{\perp}}$ are shift operators.  
\item[(iii)] $T$ is an invertible operator if and only if $T|_{\mathcal{M}}$ and $T|_{\mathcal{M}^{\perp}}$ is an invertible operator. 
\end{enumerate}
\end{lemma}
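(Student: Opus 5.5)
Throughout, write $T_1=T|_{\cl M}$ and $T_2=T|_{\cl M^\perp}$. Since $\cl M$ reduces $T$, we have $T=T_1\oplus T_2$ with respect to $\cl H=\cl M\oplus\cl M^\perp$. Consequently $T^*=T_1^*\oplus T_2^*$, and for every $n$ we have $T^n=T_1^n\oplus T_2^n$ and $T^{*n}=T_1^{*n}\oplus T_2^{*n}$. The whole proof rests on this block-diagonal form. The basic identities we will use are
\[
\ker T^*=\ker T_1^*\oplus\ker T_2^*,\qquad T^k\cl H=T_1^k\cl M\oplus T_2^k\cl M^\perp .
\]

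For (i), the norm inequalities $\delta\|x\|\le\|Tx\|\le\|x\|$ restrict immediately to $x\in\cl M$. For condition (ii) of the definition, take $x\in\cl M$. Then $T_1^{*n}T_1^{n+1}x=T^{*n}T^{n+1}x$, which lies in $T\cl H\cap\cl M$. If $T^{*n}T^{n+1}x=Ty$ with $y=y_1\oplus y_2$, then $Ty=T_1y_1\oplus T_2y_2$. The component in $\cl M^\perp$ must vanish, so $Ty=T_1y_1\in T_1\cl M$. Thus $T_1$ is a near-isometry, and the same argument applies to $T_2$.

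For (iii), the direct sum $T_1\oplus T_2$ is invertible exactly when both summands are. If $T$ is invertible, then $T^{-1}$ commutes with the projection $P_{\cl M}$, because $T$ does, so $T^{-1}$ leaves $\cl M$ and $\cl M^\perp$ invariant and restricts to inverses of $T_1$ and $T_2$. The converse is immediate from the block form.

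For (ii), the ``if'' direction is easy. If $\cl M=\bigoplus_n T_1^n\cl W_1$ and $\cl M^\perp=\bigoplus_n T_2^n\cl W_2$, set $\cl W=\cl W_1\oplus\cl W_2$. This space is wandering for $T$, since the blocks are orthogonal componentwise, and $T^n\cl W=T_1^n\cl W_1\oplus T_2^n\cl W_2$, so $\cl H=\bigoplus_n T^n\cl W$.

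The ``only if'' direction is the main obstacle, because an arbitrary wandering subspace $\cl W$ for $T$ need not split along $\cl M$. The plan is to show that the wandering subspace of a shift near-isometry is forced to be $\ker T^*$, which does split.

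1. Suppose $\cl H=\bigoplus_{n\ge0}T^n\cl W$. Then $T\cl H=\overline{\operatorname{span}}\{T^n\cl W:n\ge1\}$, and this closure is the closed subspace $\bigoplus_{n\ge1}T^n\cl W$. Since $T$ is bounded below, $T\cl H$ is closed, so the two agree.
2. It follows that $\cl W=\cl H\ominus T\cl H=\ker T^*$.
3. Substituting, $\cl H=\bigoplus_n T^n(\ker T^*)$, where $T^n(\ker T^*)=T_1^n(\ker T_1^*)\oplus T_2^n(\ker T_2^*)$.
4. Projecting onto $\cl M$, which commutes with $T$, gives $\cl M=\overline{\sum_n T_1^n\ker T_1^*}$.
5. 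The family $\{T_1^n\ker T_1^*\}_n$ is mutually orthogonal, since $T_1^n\ker T_1^*\subseteq T^n\cl W$. Hence $\cl M=\bigoplus_n T_1^n\ker T_1^*$, so $T_1$ is a shift with wandering subspace $\ker T_1^*$. The same holds for $T_2$.

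Notice that condition (ii) of the near-isometry definition is not even needed here; only the closed range of $T$ (from bounded-belowness) and the orthogonality of the summands are used.
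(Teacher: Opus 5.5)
Your proof is correct and follows essentially the same route as the paper. You use the block-diagonal form of $T$ with respect to $\mathcal{M}\oplus\mathcal{M}^{\perp}$, the splitting $\ker T^*=\ker T_1^*\oplus\ker T_2^*$ for the ``only if'' part of (ii), the wandering subspace $\mathcal{W}_1\oplus\mathcal{W}_2$ for the ``if'' part, and the block matrix for (iii). In (i) you check the range inclusion $T^{*n}T^{n+1}\mathcal{H}\subseteq T\mathcal{H}$ directly, whereas the paper checks the equivalent orthogonality $T^n(\ker T^*)\perp T^{n+1}\mathcal{H}$.

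Your steps 1--2, which show that any wandering subspace of a bounded-below shift must equal $\ker T^*$, supply a justification the paper omits. The paper passes straight from the existence of some wandering subspace to $\mathcal{H}=\bigoplus_{n\ge 0}T^n(\ker T^*)$.
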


\begin{proof} Clearly, $T$ restricted to $\cl M$ stays a bounded below contraction. Furthermore, since $\cl M$ reduces $T$, we have $ker(T|_{\cl M})^*=kerT^*\cap \cl M$. Consequently, $(T|_{\cl M})^n(ker(T|_{\cl M})^*)$ is contained in $T^n(kerT^*)$, while $T^{n+1}\cl M$ is contained in $T^{n+1}\cl H$. Thus, it follows that 
$(T|_{\cl M})^n(ker(T|_{\cl M})^*)\perp T^{n+1}\cl M$. Hence, $T$ restricted to $\cl M$ remains a near-isometry.  

To prove $(ii)$, let us first assume that $T$ is a shift operator on $\mathcal{H}$. Then, there exists a wandering subspace $\mathcal{W}$ such that $\mathcal{H}=\bigoplus_{n=0}^{\infty} T^{n}ker T^{*}.$

Since $\mathcal{M}$ and $\mathcal{M}^{\perp}$ reduces $T$, we have $ker T^{*}=ker T^*|_{\mathcal{M}}\oplus ker T^*|_{\mathcal{M}^{\perp}}$.
This implies 
\begin{equation*}
\mathcal{H}=\bigoplus_{n=0}^{\infty}T^{n}(ker T^{*})=\bigoplus_{n=0}^{\infty}T^{n}(ker T^*|_{\mathcal{M}})\bigoplus_{n=0}^{\infty}T^{n}(ker T^*|_{\mathcal{M}^{\perp}}).   
\end{equation*}

Moreover, $\bigoplus_{n=0}^{\infty}T^{n}(ker T^*|_{\mathcal{M}})\subseteq \mathcal{M}$ and $\bigoplus_{n=0}^{\infty}T^{n}(ker T^*|_{\mathcal{M}^{\perp}})\subseteq \mathcal{M}^{\perp}$. Therefore,  
$$\mathcal{M}=\bigoplus _{n\geq 0}(T|_{\mathcal{M}})^{n}ker (T^*|_{\mathcal{M}}) \ \ {\rm and} \ \  \mathcal{M}^{\perp}=\bigoplus _{n\geq 0}(T|_{\mathcal{M}^{\perp}})^{n}ker (T^*|_{\mathcal{M}^{\perp}}),$$ 
which establishes that restriction of $T$ to $\cl M$ and $\cl M^{\perp}$ are both shifts.  

To prove the converse part in $(ii)$, let $\mathcal{W}_{A}$ and $\mathcal{W}_{B}$ be the wandering subspaces of $T|_{\mathcal{M}}$ and $T|_{\mathcal{M}^{\perp}}$ respectively. Then $\mathcal{W}=\mathcal{W}_{A}\bigoplus \mathcal{W}_{B}$ is a wandering subspace for $T$ because $T^{n}\mathcal{W}\perp T^{m}\mathcal{W}$ for all $n\neq m.$ Additionally, $\mathcal{W}_{A}\subseteq \mathcal{W}$, which implies that $\mathcal{M} \subseteq \bigoplus_{n=0}^{\infty}T^{n}\mathcal{W}.$ Similarly $\mathcal{M}^{\perp}\subseteq  \bigoplus_{n=0}^{\infty}T^{n}\mathcal{W}.$ Thus, $\mathcal{H}=\bigoplus_{n=0}^{\infty}T^{n}\mathcal{W}.$  

Now, to settle $(iii)$, note that we can decompose $T$ as 
 $$ T=\begin{bmatrix}  
   T|_{\mathcal{M}} & 0\\
0 & T|_{\mathcal{M}^{\perp}}
\end{bmatrix}$$ from which it is evident that $T$ is invertible if and only if $T|_{\mathcal{M}}$ and $ T|_{\mathcal{M}^{\perp}}$ are invertible.
\end{proof}

\section{An analytic model for near-isometries}\label{model-near}
Just as the Wold decomposition of an isometry yields an analytic model for its shift part, the Wold-type decomposition for a near-isometry (Theorem \ref{1 wold decomposition for a near-isometry}) also leads to an analytic model of its shift component, which appears to be new in the literature. In particular, we show that the shift part of a near-isometry is unitarily equivalent to an operator-valued  weighted shift acting on a vector-valued Hardy space $H^2_{\cl H}(\bb D)$. To the best of our knowledge, this provides the first analytic model for a near-isometry. To state and prove this result precisely, we first recall the Wold-type decomposition for a near-isometry from \cite{lata2022multivariable}, and then the notion of an operator-valued weighted shift.

\begin{theorem}\label{1 wold decomposition for a near-isometry}
 Let $T\in B(\mathcal{H})$ be a near-isometry. Then $\mathcal{H}$ can be decomposed into reducing subspaces of $T$ as $\mathcal{H}=\mathcal{H}_{T,S}\bigoplus \mathcal{H}_{T,I}$ such that $T|_{\mathcal{H}_{T,S}} $ is a shift operator and $T|_{\mathcal{H}_{T,I}} $ is an invertible operator. Moreover, $\mathcal{H}_{T,S}=\bigoplus_{n=0 }^{\infty}T^{n}(ker T^{*})$ and $\mathcal{H}_{T,I}=\bigcap_{n\geq 0}T^{n}\mathcal{H}$ where $\mathcal{W}=\mathcal{H}\ominus T\mathcal{H}$.
\end{theorem}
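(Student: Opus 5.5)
Set $\mathcal{W}=\ker T^*=\mathcal{H}\ominus T\mathcal{H}$ (note $T\mathcal{H}$ is closed since $T$ is bounded below). The plan is to show that $\mathcal{W}$ is wandering, identify $\bigoplus_{n\ge0}T^n\mathcal{W}$ as the orthogonal complement of $\bigcap_{n\ge0}T^n\mathcal{H}$, and then check that both pieces reduce $T$ with the required restrictions.

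\textbf{Wandering.} Condition (ii) in the form $T^n(\ker T^*)\perp T^{n+1}\mathcal{H}$ gives, for $m>n$, $T^m\mathcal{W}\subseteq T^{n+1}\mathcal{H}\perp T^n\mathcal{W}$. Each $T^n\mathcal{W}$ is closed because $T^n$ is bounded below, so $\mathcal{H}_{T,S}:=\bigoplus_{n\ge0}T^n\mathcal{W}$ is a well-defined closed subspace.

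\textbf{Finite-stage decomposition.} The key identity is
\[
T^n\mathcal{H}=T^n\mathcal{W}\oplus T^{n+1}\mathcal{H},\qquad n\ge0,
\]
which follows by applying $T^n$ to $\mathcal{H}=\mathcal{W}+T\mathcal{H}$ and using the orthogonality from condition (ii). Iterating,
\[
\mathcal{H}=\mathcal{W}\oplus T\mathcal{W}\oplus\cdots\oplus T^{n-1}\mathcal{W}\oplus T^n\mathcal{H}.
\]
Since $\mathcal{H}=\bigl(\bigoplus_{k<n}T^k\mathcal{W}\bigr)\oplus T^n\mathcal{H}$ is an orthogonal decomposition, we get $T^n\mathcal{H}=\bigl(\bigoplus_{k<n}T^k\mathcal{W}\bigr)^\perp$. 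The subspaces $T^n\mathcal{H}$ are closed and decreasing, so intersecting over $n$ gives
\[
\mathcal{H}_{T,I}:=\bigcap_{n\ge0}T^n\mathcal{H}=\mathcal{H}_{T,S}^{\perp},
\]
and hence $\mathcal{H}=\mathcal{H}_{T,S}\oplus\mathcal{H}_{T,I}$.

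\textbf{Invariance and restrictions.} Clearly $T\mathcal{H}_{T,S}\subseteq\mathcal{H}_{T,S}$. For the other piece, $T\bigl(\bigcap_n T^n\mathcal{H}\bigr)\subseteq\bigcap_n T^{n+1}\mathcal{H}=\mathcal{H}_{T,I}$. Since the two spaces are complementary and both invariant, each reduces $T$.

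By construction, $T|_{\mathcal{H}_{T,S}}$ is a shift with wandering subspace $\mathcal{W}$.

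For invertibility of $T|_{\mathcal{H}_{T,I}}$, it is injective and bounded below, so only surjectivity needs proof. Take $x\in\mathcal{H}_{T,I}$.
\begin{itemize}
\item For each $n$ we have $x\in T^{n+1}\mathcal{H}$, so $x=Ty_n$ with $y_n\in T^n\mathcal{H}$.
\item $T$ is injective, so all the $y_n$ equal a single vector $y$.
\item Then $y\in\bigcap_n T^n\mathcal{H}=\mathcal{H}_{T,I}$, so $x\in T\mathcal{H}_{T,I}$.
\end{itemize}
Thus $T|_{\mathcal{H}_{T,I}}$ is onto, hence invertible.

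\textbf{Main obstacle.} The delicate point is the identity $T^n\mathcal{H}=T^n\mathcal{W}\oplus T^{n+1}\mathcal{H}$. For an isometry this orthogonality is automatic, but here it must come entirely from condition (ii); the closedness of the ranges comes from $T$ being bounded below.
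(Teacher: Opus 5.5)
Your proof is correct and complete for the statement as written. The wandering property, the finite-stage splitting $\mathcal{H}=\bigl(\bigoplus_{k<n}T^k\mathcal{W}\bigr)\oplus T^n\mathcal{H}$, the identification $\bigcap_{n}T^n\mathcal{H}=\mathcal{H}_{T,S}^{\perp}$, the reducing property, and surjectivity on $\bigcap_n T^n\mathcal{H}$ via injectivity all hold as you argue. Note that only the lower bound $\|Tx\|\ge\delta\|x\|$ is used, never contractivity.

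The paper does not prove this theorem; it quotes it from the earlier work of Lata, Pokhriyal and Singh. The closest argument in the paper is the proof of Lemma~\ref{1 le projection onto shift and invertible part}. It uses exactly your splitting, asserted there without proof and phrased as $T^{n+1}(T^{n+1})^{\#}=I-A_n$, where $A_n$ is the projection onto $\bigoplus_{k\le n}T^k(\ker T^*)$, followed by an SOT limit. That lemma, however, invokes the present theorem to identify the limit $I-P_{\mathcal{H}_{T,S}}$ with $P_{\mathcal{H}_{T,I}}$. Your equality $\bigcap_n\bigl(\bigoplus_{k<n}T^k\mathcal{W}\bigr)^{\perp}=\mathcal{H}_{T,S}^{\perp}$ establishes this directly. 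So your argument is the self-contained, subspace-level version of the same idea, and it also justifies the splitting from condition (ii).

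One thing the paper later relies on, in the proof of Theorem~\ref{1 th equivalent condition for wold}, is uniqueness of this decomposition. The statement does not literally claim it, but it follows quickly from what you prove. Suppose $\mathcal{H}=X\oplus Y$ with $X,Y$ reducing, $T|_X$ a shift with wandering subspace $\mathcal{V}$, and $T|_Y$ invertible.
\begin{enumerate}
\item[(a)] Since $Y=T^nY\subseteq T^n\mathcal{H}$ for every $n$, we get $Y\subseteq\mathcal{H}_{T,I}$.
\item[(b)] Take $x\in X\cap\mathcal{H}_{T,I}$. Invariance of $X$ and $Y$ lets you discard the $Y$-component of any preimage, so $x\in\bigcap_n T^nX$. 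Also $\bigcap_n T^nX\subseteq\bigcap_n\bigoplus_{k\ge n}T^k\mathcal{V}=\{0\}$, so $x=0$.
\item[(c)] Decomposing $x\in\mathcal{H}_{T,I}$ along $X\oplus Y$, the $Y$-component lies in $\mathcal{H}_{T,I}$ by (a), so the $X$-component lies in $X\cap\mathcal{H}_{T,I}=\{0\}$ by (b). Hence $\mathcal{H}_{T,I}=Y$ and $\mathcal{H}_{T,S}=X$.
\end{enumerate}
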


Henceforth, if $T$ is a near-isometry on a Hilbert space $\cl H$, we shall use $\cl H_{T, S}$ and $\cl H_{T, I}$ to denote the reducing subspaces for $T$ given by Theorem \ref{1 wold decomposition for a near-isometry}, with the meaning as explained in that theorem. We shall refer to the restriction of $T$ to $\cl H_{T, S}$ and to $\cl H_{T, I}$ as the shift part and the invertible part, respectively.

Let $\cl H$ be a Hilbert space and let $\{T_n\}_{n\ge 0}$ be a sequence of operators on $\cl H$ with $\sup_{n}||T_n||<\infty$. The operator-valued weighted shift operator associated with the weight sequence $\{T_n\}_{n\ge 0}$ is the operator $S:H^2_{\cl H}(\bb D)\to H^2_{\cl H}(\bb D)$ defined by 
$$
S(z^n\otimes x)= z^{n+1}\otimes T_n x.
$$

It is fairly straightforward to verify that 
$$S^k(z^n\otimes x)=z^{n+k}\otimes (T_{n+k-1}\cdots T_n)(x_n), 
$$ which yields that $S^n(1\otimes x)\perp S^k(1\otimes y)$ for all $x, y\in \cl H$, whenever $n\ne k$. Consequently, if each $T_n$ is surjective, then $S$ is a shift operator with wandering subspace $\cl H.$  

The following result shows that the shift part of a near-isometry is unitarily equivalent to an operator-valued weighted shift, and thereby provides an analytic model for it. 

\begin{theorem}\label{1 th analytic model of shift ni} 
Let $T\in B(\cl H)$ be a near-isometry. Then $T$ is a shift operator if and only if $T$ is unitarily equivalent to an operator-valued weighted shift $S$ with associated weight sequence $\{T_n\}_{n\ge 0}$ of invertible operators in $B(\cl H)$ that, in addition, satisfy 
\begin{equation}\label{lowerbd}
c ||x|| \le ||T_nx||\le ||x|| 
\end{equation}
for all $x\in \cl H$ and some $c>0$.
\end{theorem}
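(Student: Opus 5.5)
Throughout, I read the coefficient space of the model as the wandering subspace $\cl W=ker T^*$, not the original $\cl H$; the statement's ``$B(\cl H)$'' should be understood as $B(\cl W)$. The plan is to use the Wold-type decomposition of Theorem \ref{1 wold decomposition for a near-isometry} to transport the orthogonal decomposition $\cl H=\bigoplus_{n\ge 0}T^n\cl W$ onto $H^2_{\cl W}(\bb D)=\bigoplus_{n\ge0} z^n\otimes \cl W$ via unitaries, one slice at a time.

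\emph{Forward direction.} Assume $T$ is a shift. By Theorem \ref{1 wold decomposition for a near-isometry}, $\cl H=\bigoplus_{n\ge0}T^n\cl W$ with $\cl W=ker T^*$.
\begin{enumerate}
\item[(a)] Since $T$ is bounded below, so is $T^n|_{\cl W}$, with constant $\delta^n$. Hence $T^n\cl W$ is closed and $T^n|_{\cl W}:\cl W\to T^n\cl W$ is a bounded bijection.
\item[(b)] Take the polar decomposition $T^n|_{\cl W}=V_n\,|T^n|_{\cl W}|$. Because $T^n|_{\cl W}$ is injective with closed range $T^n\cl W$, the partial isometry $V_n$ is a unitary from $\cl W$ onto $T^n\cl W$, and $V_0=I$.
\item[(c)] Define $U:H^2_{\cl W}(\bb D)\to\cl H$ by $U(z^n\otimes x)=V_nx$. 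Since the slices $T^n\cl W$ are mutually orthogonal and span $\cl H$, $U$ is unitary.
\item[(d)] Observe that $T(T^n\cl W)=T^{n+1}\cl W$, so $T$ restricts to a bijection $T^n\cl W\to T^{n+1}\cl W$. Set $T_n:=V_{n+1}^*\,T\,V_n\in B(\cl W)$. Each $T_n$ is invertible, since it is a composition of bijections with bounded inverses by the open mapping theorem.
\item[(e)] Each $T_n$ satisfies $\delta\|x\|\le\|T_nx\|\le\|x\|$, because the $V$'s are isometric and $T$ satisfies these bounds. This gives (\ref{lowerbd}) with $c=\delta$ and shows $\sup_n\|T_n\|\le 1$.
\item[(f)] Finally, $TU(z^n\otimes x)=TV_nx=V_{n+1}T_nx=U(z^{n+1}\otimes T_nx)=US(z^n\otimes x)$, so $U^*TU=S$.
\end{enumerate}

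\emph{Converse.} Suppose $T\cong S$ with invertible weights satisfying (\ref{lowerbd}). As noted after the definition of operator-valued weighted shifts, $S^n(1\otimes\cl W)\perp S^k(1\otimes\cl W)$ for $n\ne k$. Invertibility of the weights gives $S^k(1\otimes\cl W)=z^k\otimes (T_{k-1}\cdots T_0)\cl W=z^k\otimes\cl W$. Hence $H^2_{\cl W}(\bb D)=\bigoplus_k S^k(1\otimes\cl W)$, so $S$ is a shift operator. Being a shift is preserved under unitary equivalence, via the image of the wandering subspace, so $T$ is a shift. (The near-isometry property of $T$ is part of the hypothesis. One can also check directly that $\|Sf\|^2=\sum\|T_nx_n\|^2$ lies between $c^2\|f\|^2$ and $\|f\|^2$.)

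\emph{Main obstacle.} The delicate point is step (b): the polar-decomposition partial isometries must really be unitaries onto $T^n\cl W$, and the intertwining in (d) must be onto. Both rest on $T^n\cl W$ being closed and on $T$ mapping $T^n\cl W$ exactly onto $T^{n+1}\cl W$, and this is precisely where the lower bound $\delta$ of a near-isometry is essential.
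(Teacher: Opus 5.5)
Your proof is correct and follows the paper's argument. You transport $\mathcal{H}=\bigoplus_{n\ge 0}T^n(\ker T^*)$ slice by slice onto $H^2_{\ker T^*}(\mathbb{D})$ using the unitary polar factors of $T^n|_{\ker T^*}$ (your $V_n$ are the paper's $\Lambda_n$, your $U$ is the inverse of the paper's), with weights $T_n=V_{n+1}^*TV_n$; your reading of the coefficient space as $\ker T^*$ matches the paper's proof, and in the converse you rightly skip the paper's extra check that $S$ is a near-isometry, which is unnecessary since $T$ is assumed to be one.
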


\begin{proof} Let $T$ be a shift operator on $\cl H$. Then, by the Wold-type decomposition of a near-isometry, we have 
\begin{equation*}
\mathcal{H}=\bigoplus_{n\geq 0}T^{n}kerT^{*}.    
\end{equation*}

For fixed $n\ge 0$, the operator $T^n: kerT^*\to T^n(kerT^*)$ is an invertible operator; therefore, its polar decomposition yields a unitary map 
$\Lambda_{n}:ker T^{*}\to T^{n}(ker T^{*})$. Consequently, the map $U:\cl H\to H^2_{kerT^*}(\bb D)$ given by 
$$
U(T^nx)= z^n\otimes \Lambda_n^* T^n x
$$
is a well-defined isometry. Furthermore, for $x\in kerT^*$ we can write $\Lambda_n(x)=T^ny$  some $y\in kerT^*$, which yields that 
$U(T^ny)=z^n\otimes \Lambda_n^*T^ny=z^n\otimes x$. This establishes that $U$ is onto; hence, $U$ is a unitary. Now, consider   
\begin{equation*}
UTU^{*}(z^{n}\otimes x)=UT\Lambda_{n}x=z^{n+1}\otimes \Lambda_{n+1}^*T\Lambda_nx.    
\end{equation*}
 
Setting $T_n=\Lambda_{n+1}^*T\Lambda_n$, we obtain that $T_n\in B(kerT^*)$ is invertible and $||T_nx||=||T\Lambda_nx||$ for all $x\in kerT^*.$ Now, since $T$ is a near-isometry and each $\Lambda_n$ is a unitary, we conclude that each $T_n$ is a contraction and the sequence $\{T_n\}$ is uniformly bounded below. This completes the 
proof of the forward implication.

To prove the converse, let $\{T_n\}_{n\ge 0}$ be a sequence of invertible operators on $\cl H$ satisfying (\ref{lowerbd}) and $S$ be the operator-valued weighted shift on $H^2_{\cl H}(\bb D)$ with weights $\{T_n\}$. Since $T$ is unitarily equivalent to $S$, proving $S$ is a near-isometry automatically implies that $T$ is a near-isometry. To this end, we show that $S$ is a near-isometry. Let $x=\sum\limits_{i=0}^\infty z^{i}\otimes x_{i}\in H^{2}_{\cl H}(\bb D).$ Then,  
$$
\|S(x)\|^2= \sum_{i=0}^\infty \left\|z^{i+1}\otimes T_ix_{i}\right\|^{2}=\sum_{i=0}^\infty\| T_{i}x_{i}\|^{2}.  
$$

Thus, $c||x||\le ||Sx||\le ||x||$ for $x\in H^2_{\cl H}(\bb D)$, where $c$ is the lower bound for every $T_n$.

Further, $S^n(\cl H)=span{\{z^n\otimes x: x\in \cl H\}}$ and $S^{n+1}H^2_{\cl H}(\bb D)=z^{n+1}H^2_{\cl H}(\bb D).$ It follows that $S^n(\cl H)\perp S^{n+1}H^2_{\cl H}(\bb D)$. 
Consequently, $S^{*n}S^{n+1}(H^2_{\cl H}(\bb D))\subseteq S(H^2_{\cl H}(\bb D))$. Indeed, since each $T_n$ is invertible, the orthogonal complement of the range of S equals 
$\cl H$, identified with the subspace $\{1\otimes x: x\in \cl H\}$. Hence $S$ is a near-isometry on $H^2_{\cl H}(\bb D),$ which implies that $T$ is a near-isometry on $\cl H$. Finally, since $S$ is a shift on $H^2_{\cl H}(\bb D)$, it follows that $T$ itself is a shift. This completes the proof.
\end{proof}

\section{A Wold-type decomposition for an $n$-tuple of near-isometries}\label{char-Wold}
In this section, we first define what we mean by a Wold-type decomposition for an $n$-tuple of near-isometries. Then we  establish–in Theorem \ref{1 th equivalent condition for wold}–two sets of conditions under which an $n$-tuple of near-isometries admits this decomposition. These conditions, in fact, determine the exact form of the summands and thereby yield the uniqueness of such a decomposition whenever it exists (Remark \ref{1 remark unique rep of Wold for n-tuple of near-isometry}). 

 \begin{definition}\label{1 de Wold decom for a near-isometry}
Let $\mathcal{H}$ be a Hilbert space. An n-tuple of near-isometries $T=(T_{1},T_{2},\dots,T_{n})$ on $\mathcal{H}$ is said to admit a Wold-type decomposition if  there exist $2^{n}$ closed subspaces $\{\mathcal{H}_{A}\}_{A\subseteq I_{n}}$ reducing each $T_{i}$ such that   
\begin{enumerate}
\renewcommand{\labelenumi}{(\roman{enumi})}
\item$\mathcal{H}=\bigoplus\limits_{A\subseteq{I_{n}}} \mathcal{H}_{A},$
\item $T_{i}|_{\mathcal{H}_{A}}$ is a shift operator if $i\in A,$
\item $T_{i}|_{\mathcal{H}_{A}}$ is an invertible operator if $i\in I_{n}\setminus A.$
\end{enumerate}
\end{definition}

\begin{theorem}\label{1 th equivalent condition for wold}
Let $T=(T_{1},T_{2},\dots,T_{n})$ be an $n$-tuple of near-isometries on $\mathcal{H}.$ 
Then the following statements are equivalent:
\begin{enumerate}
\renewcommand{\labelenumi}{(\roman{enumi})}
\item T admits a Wold-type decomposition.
\item $\mathcal{H}_{T_{i},S}$ reduces $T_{k}$ for all $i,k\in I_{n}.$
\item $\mathcal{H}_{T_{i},I}$ reduces $T_{k}$ for all $i,k\in I_{n}.$
\end{enumerate}
\end{theorem}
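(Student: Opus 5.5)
(ii)$\Leftrightarrow$(iii) is immediate: by Theorem \ref{1 wold decomposition for a near-isometry}, $\mathcal{H}_{T_i,I}=\mathcal{H}\ominus\mathcal{H}_{T_i,S}$, and a closed subspace reduces $T_k$ exactly when its orthogonal complement does. The real content is therefore (i)$\Leftrightarrow$(ii).

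For (ii)$\Rightarrow$(i), we use (ii) and (iii) together. Every $P_i:=P_{\mathcal{H}_{T_i,S}}$ commutes with every $T_k$ and $T_k^*$. We first check that the $P_i$ commute with each other. $\mathcal{H}_{T_i,S}$ reduces $T_j$, so by Lemma \ref{1 shift on reducing subspace}(i) the restriction $T_j|_{\mathcal{H}_{T_i,S}}$ is a near-isometry. Its kernel of the adjoint is $\ker T_j^*\cap\mathcal{H}_{T_i,S}$, and its range powers are $T_j^n\mathcal{H}_{T_i,S}$. Since $\mathcal{H}=\mathcal{H}_{T_i,S}\oplus\mathcal{H}_{T_i,I}$ reduces $T_j$, we get
$$\mathcal{H}_{T_j,S}=\bigoplus_n T_j^n(\ker T_j^*\cap \mathcal{H}_{T_i,S})\oplus\bigoplus_n T_j^n(\ker T_j^*\cap \mathcal{H}_{T_i,I}),$$
because $\ker T_j^*$ splits along the reducing decomposition. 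Hence $\mathcal{H}_{T_j,S}=(\mathcal{H}_{T_j,S}\cap\mathcal{H}_{T_i,S})\oplus(\mathcal{H}_{T_j,S}\cap\mathcal{H}_{T_i,I})$, which is equivalent to $P_iP_j=P_jP_i$. For $A\subseteq I_n$ we then define
$$\mathcal{H}_A=\bigcap_{i\in A}\mathcal{H}_{T_i,S}\cap\bigcap_{i\notin A}\mathcal{H}_{T_i,I},$$
whose projection is $\prod_{i\in A}P_i\prod_{i\notin A}(I-P_i)$. Expanding $\prod_i(P_i+(I-P_i))=I$ gives the orthogonal decomposition $\mathcal{H}=\bigoplus_A\mathcal{H}_A$, and each $\mathcal{H}_A$ reduces every $T_k$ because its projection commutes with $T_k$.

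Next we check conditions (ii) and (iii) of Definition \ref{1 de Wold decom for a near-isometry}. Take $i\in A$. Then $\mathcal{H}_A\subseteq\mathcal{H}_{T_i,S}$ and $\mathcal{H}_A$ reduces $T_i$, so it also reduces $T_i|_{\mathcal{H}_{T_i,S}}$, which is a shift. Lemma \ref{1 shift on reducing subspace}(ii), applied to this near-isometry on $\mathcal{H}_{T_i,S}$, shows that $T_i|_{\mathcal{H}_A}$ is a shift. For $i\notin A$ the same argument with Lemma \ref{1 shift on reducing subspace}(iii) shows that $T_i|_{\mathcal{H}_A}$ is invertible.

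For (i)$\Rightarrow$(ii), the key step is the uniqueness statement
$$\mathcal{H}_{T_i,S}=\bigoplus_{A\ni i}\mathcal{H}_A,$$
which immediately gives (ii) since each $\mathcal{H}_A$ reduces every $T_k$. Set $\mathcal{K}=\bigoplus_{A\ni i}\mathcal{H}_A$. Then $\mathcal{K}$ reduces $T_i$, the restriction $T_i|_{\mathcal{K}}$ is a shift (direct sum of shifts, by the converse in Lemma \ref{1 shift on reducing subspace}(ii)), and $T_i|_{\mathcal{K}^\perp}$ is invertible (direct sum of invertibles with uniformly bounded inverses — only finitely many summands). It remains to show that a reducing decomposition into a shift part and an invertible part must coincide with the Wold decomposition of Theorem \ref{1 wold decomposition for a near-isometry}.

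Let $T=T_i$. On $\mathcal{K}^\perp$ the operator $T$ is onto, so $\ker T^*\subseteq\mathcal{K}$. Moreover $\mathcal{K}^\perp=T^n\mathcal{K}^\perp\subseteq\bigcap_n T^n\mathcal{H}=\mathcal{H}_{T,I}$. Conversely, on $\mathcal{K}$ the shift with wandering subspace $\mathcal{W}$ has $\bigcap_n T^n\mathcal{K}\subseteq\bigcap_n\bigoplus_{m\ge n}T^m\mathcal{W}=0$. Since $T^n\mathcal{H}=T^n\mathcal{K}\oplus\mathcal{K}^\perp$, this gives $\mathcal{H}_{T,I}=\mathcal{K}^\perp$, and hence $\mathcal{K}=\mathcal{H}_{T,S}$. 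I expect the main care to be needed here, in verifying $T^m\mathcal{W}\subseteq\bigoplus$ of later pieces so that the intersection vanishes; the commuting-projection step in (ii)$\Rightarrow$(i) is otherwise the subtle part.
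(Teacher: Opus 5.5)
Your argument is correct. For (i)$\Rightarrow$(ii) it is the paper's argument: group the $\mathcal{H}_A$ with $i\in A$ and those with $i\notin A$, then identify the two pieces with $\mathcal{H}_{T_i,S}$ and $\mathcal{H}_{T_i,I}$. The difference is that where the paper simply invokes ``the uniqueness of the Wold-type decomposition for a near-isometry'', you actually prove it. The facts $\mathcal{K}^\perp=T^n\mathcal{K}^\perp\subseteq\mathcal{H}_{T,I}$ and $\bigcap_n T^n\mathcal{K}=\{0\}$ together force $\mathcal{H}_{T,I}=\mathcal{K}^\perp$. Your side observation $\ker T^*\subseteq\mathcal{K}$ is true but is never used.

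For (ii)$\Rightarrow$(i) your route is genuinely different. The paper inducts on the number of operators. It proves $\mathcal{H}_A^{I_m}\subseteq\mathcal{H}_{A\cup\{j\}}^{I_m\cup\{j\}}\oplus\mathcal{H}_A^{I_m\cup\{j\}}$ by applying the single-operator decomposition of $T_j$ to the $T_j$-reducing subspace $\mathcal{H}_A^{I_m}$. It then refines $\mathcal{H}=\bigoplus_{A\subseteq I_m}\mathcal{H}_A^{I_m}$ one index at a time.

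You use the same underlying fact (a $T_j$-reducing subspace splits along $\mathcal{H}_{T_j,S}\oplus\mathcal{H}_{T_j,I}$) only once, to obtain $P_iP_j=P_jP_i$. Everything else then follows from expanding $\prod_i\bigl(P_i+(I-P_i)\bigr)=I$ with projections that commute with every $T_k$: the $2^n$ summands, their mutual orthogonality, the fact that they exhaust $\mathcal{H}$, and the fact that they reduce every $T_k$.

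This removes the inductive bookkeeping and gives the explicit form $\bigcap_{i\in A}\mathcal{H}_{T_i,S}\cap\bigcap_{i\notin A}\mathcal{H}_{T_i,I}$ of the summands directly. It is essentially the commuting-projection strategy the paper later uses in its alternative proof for doubly twisted near-isometries. The difference is that here commutation comes for free from hypothesis (ii) rather than from the twisted relations. The paper's induction, in turn, works purely with subspaces and the one-variable decomposition, without setting up any projection algebra.
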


\begin{proof} The equivalence of $(ii)$ and $(iii)$ follows from the Wold-type decomposition for a near-isometry (Theorem \ref{1 wold decomposition for a near-isometry}). To complete the proof, we shall show that the condition $(i)$ is equivalent to the condition $(ii)$.  

We shall first show that $(i)$ implies $(ii)$. Suppose $T$ admits a Wold-type decomposition. Then, we can decompose  
\begin{equation}\label{1 eq1 equivalent condition for wold}
\mathcal{H}=\bigoplus_{A\subseteq{I_{n}}}\mathcal{H}_{A},
\end{equation}
where $\cl H_{A}$ are closed subspaces of $\cl H$ such that $T_{j}$ is a shift operator on $\cl H_A$ if $j\in A$ and an an invertible operator on $\cl H_A$ 
whenever $j\in I_{n}\setminus A.$  

Fix $i\in I_{n}$, and set  
$$
X_{i}=\bigoplus_{A\subseteq{I_{n}},i\in A } \mathcal{H}_{A} \ \ {\rm and} \ \ Y_{i}=\bigoplus_{A\subseteq{I_{n}},i\not\in A } \mathcal{H}_{A}.
$$

This allows us to rewrite (\ref{1 eq1 equivalent condition for wold}) as
$\mathcal{H}=X_{i}\bigoplus Y_{i}$, where  

\begin{itemize}
\item $X_{i}$ and $Y_{i}$ both reduce $T_{k}$ for all $k\in I_{n}$, because $\mathcal{H}_{A}$ reduces $T_{k}$ for every $A\subseteq{I_{n}}$;
\item $T_{i}|_{X_{i}}$ is a shift operator using Lemma \ref{1 shift on reducing subspace}, because $T_{i}|_{\mathcal{H}_{A}}$ is a shift operator for every $A$ such that $i\in A$;
 \item $T_{i}|_{Y_i}$ is an invertible operator using Lemma \ref{1 shift on reducing subspace}, because $T_{i}|_{\mathcal{H}_{A}}$ is an invertible operator for every $A$ such that $i\notin A$. 
\end{itemize}

Then, by the uniqueness of the Wold-type decomposition for a near-isometry, we have $X_{i}=\mathcal{H}_{T_{i},S}$ and $Y_{i}=\mathcal{H}_{T_{i},I}$. Thus, 
$\mathcal{H}_{T_{i},S}$ reduces $T_{k}$ for all $k\in I_{n}$. Hence, $(ii)$ holds. 

We shall now prove that $(ii)$ implies $(i)$. Suppose $\mathcal{H}_{T_{i},S}$ reduces $T_{k}$ for all $i,k\in I_{n}$. Then, $\mathcal{H}_{T_{i},I}$ also reduces $T_{k}$ for all $i,k\in I_{n}$. Before proceeding further, we fix a few notations. For a non-empty subset $B$ of $I_n$ and $A\subseteq B$, we define
$$
 \mathcal{H}_A^B=\left(\bigcap_{i\in B\setminus A}\mathcal{H}_{T_{i},I}\right)\bigcap \left(\bigcap_{k\in A}\mathcal{H}_{T_{k},S}\right),  
$$
where $\mathcal{H}_{\emptyset}^B=\bigcap_{i\in B}\mathcal{H}_{T_{i},I}$ and $\mathcal{H}_{B}^{B}=\bigcap_{i\in B}\mathcal{H}_{T_{i},S}$.  

To prove $(i)$, we first establish that 
\begin{equation}\label{obs1}
\mathcal{H}_{A}^{I_{m}}\subseteq \mathcal{ H}_{A\cup \{j\}}^{I_{m}\cup \{j\}}\oplus \mathcal{H}_{A}^{I_{m}\cup \{j\}} 
\end{equation}
whenever $m<j<n$ and $A\subseteq I_m$. Note that the direct sum on the right hand side is well-defined, since by definition 
$\mathcal{ H}_{A\cup \{j\}}^{I_{m}\cup \{j\}}$ is a subset of $\cl H_{T_j, S}$ and $\mathcal{H}_{A}^{I_{m}\cup \{j\}}$ is a subset of $\cl H_{T_j,I}$, and these two subspaces are orthogonal. 

To verify \ref{obs1}, let $m<j<n$ and $A\subseteq I_m$. Then $\mathcal{H}_{A}^{I_{m}}$ reduces $T_{j}$, since $\mathcal{H}_{T_{i},S},\mathcal{H}_{T_{i},I}$ reduces $T_{j}$ for all $i\in I_{n}$, which implies that $T_j$ is a near-isometry on $\cl H_A^{I_m}$. Therefore, using the Wold-type decomposition for a near-isometry, we decompose $\cl H_A^{I_m}$ as  

$$
\mathcal{H}_{ A}^{I_{m}}=\bigoplus_{\ell_{j}\geq 0} T_{j}^{\ell_{j}}\left(ker (T_{j}^{*}|_{\mathcal{H}_{A}^{I_{m}}})\right)\bigoplus \bigcap_{\ell_{j}\geq 0}T_{j}^{\ell_{j}}\mathcal{H}_{A}^{I_{m}}.
$$

Further, since $\mathcal{H}_{A}^{I_{m}}$ reduces $T_{j}$, $ker (T_{j}^{*}|_{\mathcal{ H}_{A}^{I_{m}}})=ker T_{j}^{*}\bigcap \mathcal{H}_{A}^{I_{m}}.$ Then
\begin{equation}\label{1 eq3 equivalent condition for wold}
\mathcal{H}_{A}^{I_{m}}=\bigoplus_{\ell_{j}\geq 0} T_{j}^{\ell_{j}}\left(ker T_{j}^{*}\bigcap \mathcal{H}_{A}^{I_{m}}\right)\bigoplus \bigcap_{\ell_{j}\geq 0}T_{j}^{\ell_{j}}\mathcal{H}_{A}^{I_{m}}.
\end{equation}

For $\ell_{j}\geq 0$, we have 
$$
T_{j}^{\ell_{j}}(ker T_{j}^{*}\bigcap \mathcal{ H}_{A}^{I_{m}})\subseteq T_{j}^{\ell_{j}}(ker T_{j}^{*}),
$$
and also 
$$
T_{j}^{\ell_{j}}\left(ker T_{j}^{*}\bigcap \mathcal{H}_{A}^{I_{m}}\right)\subseteq T_{j}^{\ell_{j}}(\mathcal{H}_{A}^{I_{m}})\subseteq\mathcal{H}_{A}^{I_{m}}.
$$
Therefore, 
\begin{equation}\label{1 eq7 equivalent condition for wold}
\bigoplus_{\ell_{j}\geq 0} T_{j}^{\ell_{j}}\left(ker T_{j}^{*}\bigcap \mathcal{H}_{A}^{I_{m}}\right)\subseteq\mathcal{H}_{T_{j},S}\bigcap \mathcal{H}_{A}^{I_{m}}=\mathcal{H}_{A\cup \{j\}}^{I_{m}\cup \{j\}}.
\end{equation}

Additionally, 
\begin{equation}\label{1 eq8 equivalent condition for wold}
\bigcap_{\ell_{j}\geq 0}T_{j}^{\ell_{j}}(\mathcal{H}_{A}^{I_{m}})\subseteq \mathcal{H}_{A}^{I_{m}}\bigcap\left(\bigcap_{\ell_{j}\geq0}T_{j}^{\ell_{j}}\mathcal{H}\right)
=\cl H_{A}^{I_m}\cap \cl H_{T_j, I}=\cl H_{A}^{I_m\cup \{j\}}.
\end{equation}

Lastly, using (\ref{1 eq7 equivalent condition for wold}) and (\ref{1 eq8 equivalent condition for wold}) in (\ref{1 eq3 equivalent condition for wold}), we obtain the desired conclusion   
\begin{equation}\label{1 eq9 equivalent condition for wold}
\mathcal{H}_{A}^{I_{m}}\subseteq \mathcal{ H}_{A\cup \{j\}}^{I_{m}\cup \{j\}}\oplus \mathcal{H}_{A}^{I_{m}\cup \{j\}}.
\end{equation}

Finally, to prove $(i)$, we shall show that 
$$
 \mathcal{H}=\bigoplus_{A\subseteq I_{n}}\mathcal{H}_{A}^{I_{n}},  
$$
where, as defined above,  
$$
\mathcal{H}_{A}^{I_n}=\left(\bigcap_{i\in I_{n}\setminus A}\mathcal{H}_{T_{i},I}\right)\bigcap \left(\bigcap_{k\in A}\mathcal{H}_{T_{k},S}\right).
$$

We shall use induction to prove the claim. Since, $T_1$ is a near-isometry on $\cl H$, we can write $\cl H = \cl H_{T_1, I}\oplus \cl H_{T_1, S},$
which, as per the notations defined above, is same as writing 
$$
\cl H =\cl H_{\emptyset}^{1}\oplus \cl H_{\{1\}}^{\{1\}}.
$$
Using (\ref{1 eq9 equivalent condition for wold}), we obtain $\mathcal{H}_{\{1\}}^{\{1\}}\subseteq \mathcal{H}_{\{1\}}^{\{1,2\}}\oplus \mathcal{H}_{\{1,2\}}^{\{1,2\}}$ and $\mathcal{H}_{\emptyset}^{\{1\}}\subseteq \mathcal{H}_{\emptyset}^{\{1,2\}}\oplus \mathcal{H}_{\{2\}}^{\{1,2\}}$.
Therefore, we have  
$$
\mathcal{H}=\cl H_{\emptyset}^{\{1,2\}} \oplus \cl H_{\{1\}}^{\{1,2\}}\oplus \cl H_{\{2\}}^{\{1,2\}}\oplus \mathcal{H}_{\{1,2\}}^{\{1,2\}}.
$$
Continuing in this same manner, suppose we obtain  
\begin{equation}\label{obs2}
\cl H=\bigoplus_{A\subseteq I_m} \cl H_A^{I_m}.
\end{equation}
for some $m< n$. Now, again using (\ref{1 eq9 equivalent condition for wold}), we get $\mathcal{H}_{A}^{I_m}\subseteq \mathcal{H}_{A}^{I_{m+1}}\oplus \mathcal{H}_{A\cup \{m+1\}}^{I_{m+1}}$, for each subset $A$ of $I_m$. Therefore,  
$\mathcal{H}\subseteq \bigoplus\limits_{A\subseteq I_{m+1}}\mathcal{H}_{A}^{I_{m+1}}.$ Thus, by induction, we obtain that 
\begin{equation}\label{obs3}
\cl H= \bigoplus_{A\subseteq I_n}\cl H_A^{I_n}.
\end{equation} 
Lastly, recall that $\mathcal{H}_{A}^{I_n}=\left(\bigcap_{i\in I_{n}\setminus A}\mathcal{H}_{T_{i},I}\right)\bigcap \left(\bigcap_{k\in A}\mathcal{H}_{T_{k},S}\right).$ Then  $\cl H_A^{I_n}\subseteq \cl H_{T_i,S}$ if $i\in A$ and $\cl H_A^{I_n}\subseteq \cl H_{T_i, I}$ if $i\notin A$. Also, $\cl H_A^{I_n}$ reduces $T_i$ for each $i$. 
Therefore, by Lemma \ref{1 shift on reducing subspace}, $T_{i}|_{\mathcal{H}_{A}^{I_n}}$ is a shift operator if $i\in A$ and $T_{i}|_{\mathcal{H}_{A}^{I_n}}$ is an invertible operator if $i\notin A$. Hence, the decomposition given by (\ref{obs3}) is a Wold-type decomposition for the tuple $(T_1, \dots, T_n)$. This completes the proof.
\end{proof}

\begin{remark}\label{1 remark unique rep of Wold for n-tuple of near-isometry}
If $T=(T_1,\dots, T_{n})$ is an $n$-tuple of near-isometries on $\cl H$ admitting a Wold-type decomposition, then the uniqueness of the decomposition follows from the proof of Theorem \ref{1 th equivalent condition for wold}. Indeed, the proof reveals that 
$$
\cl H=\bigoplus_{A\subseteq I_n} \cl H_A,
$$
where
$$
\mathcal{H}_{A}=\left(\bigcap_{i\in I_{n}\setminus A}\mathcal{H}_{T_{i},I}\right)\bigcap \left(\bigcap_{k\in A}\mathcal{H}_{T_{k},S}\right),
$$
with $\mathcal{H}_{T_{i},S}$ and $\mathcal{H}_{T_{i},I}$ the unique subspaces given by the Wold-type decomposition of the near-isometry $T_i.$  
\end{remark}

\begin{remark} Furthermore, the summands $\cl H_A$ are maximal with respect to the property that they reduce each $T_i$, and the operator $T_i$ is a shift on $\cl H_A$ if $i\in A$, and invertible on $\cl H_A$ otherwise. Indeed, suppose $\cl M$ is a subspace of $\mathcal{H}$ that reduces $T_i$ and $T_i$ is a shift on $\cl M$. Then, $T_i$ is a near-isometry on $\cl M$; therefore, by the Wold-type decomposition of a near-isometry,
$$
\cl M=\bigoplus\limits_{k\ge0}T_i^k\left(ker(T_i|_{\cl M})^*\right).
$$
However, $\left(T_i|_{\cl M}\right)^*=T^*_i|_{\cl M}$, which implies that $ker(T_i|_{\cl M})^*=kerT_i^*\cap \cl M$. Thus, we have 
$\cl M\subseteq \bigoplus\limits_{k\ge0}T_i^k (kerT_i^*)=\cl H_{T_i, S}$. Similarly, if $T_i$ is invertible on $\cl M$, we can use the Wold-type decomposition of a 
near-isometry to conclude $\cl M\subseteq \cl H_{T_i, I}$.  
\end{remark}

\section{Doubly twisted near-isometries: definition and a construction}\label{doubly-ex}
In this section, we introduce the notion of a doubly twisted near-isometry and describe its construction (Theorem \ref{doubly-ex1}) from a collection of unitaries and near-isometries.

\begin{definition}\label{1 def of doubly twisted near-isometry}
Let $n\geq 2$. Let $\{U_{ij}\}_{1\leq i<j\leq n}$ be $\binom{n}{2}$ commuting unitaries on $\mathcal{H}$. An $n$-tuple $T=(T_{1},T_{2},\dots,T_{n})$ of near-isometries  on $\mathcal{H}$ is said to be a doubly twisted near-isometry with respect to $\{U_{ij}: 1\le i<j\le n\}$ if for all $1\le i<j\le n$ and $1\le k\le n$, we have
\begin{enumerate}
\renewcommand{\labelenumi}{(\roman{enumi})}
\item $T_{i}^{*}T_{j}=U_{ij}^{*}T_{j}T_{i}^{*},$
\item $T_{k}U_{ij}=U_{ij}T_{k},$
\item $T_{i}T_{j}=U_{ij}T_{j}T_{i}$.
\end{enumerate}
\end{definition}

Henceforth, when referring to a doubly twisted near-isometry, we shall not explicitly mention the associated unitaries $\{U_{ij}: 1\le i<j\le n\}$ whenever they are clear from the context.

\begin{remark} Suppose that, for a doubly twisted near-isometry, we define $U_{ji}:=U_{ij}^{*}$ when $i<j$. Then one can easily show that the conditions $(i)-(iii)$ in the definition in fact holds for all $1\le i,j,k\le n$ whenever $i\ne j$.
\end{remark}

\begin{remark} If the near-isometries in the definition of a doubly twisted near-isometry are in fact isometries, then condition $(iii)$ follows from conditions 
$(i)$ and $(ii)$ \cite[Lemma 3.1]{jaydebmansirakshit2022}. However, this implication fails in the near-isometric setting, as demonstrated by the example below. This distinction becomes significant in Theorem \ref{1 th wold decom for dtni by induction}, where condition $(iii)$ is essential for obtaining a Wold-type decomposition. Moreover, the example shows that a pair failing condition $(iii)$ need not admit a Wold-type decomposition, thereby reinforcing the necessity of condition $(iii)$ in the near-isometric setting.
\end{remark}

\begin{example}\label{1 ex commuting case in near-isometry}
Let $\mathcal{H}=\mathbb{C}\oplus H^2(\mathbb{D})$. For $\phi(z)=\frac{1}{6+3z}$, the Toeplitz operator $M_{\phi}$ is invertible on $H^2(\bb D)$ and 
$\|M_{\phi}\|=||\phi||_{\infty}=\frac{1}{3}$. Further, let $f=\frac{1}{2(1-rz)}$ for some positive $r$ such that $r^2\le \frac{7}{16}$ . Then $f\in H^2(\bb D);$ therefore, 
$F(g) = \langle g, f \rangle$ is a bounded linear functional on $H^2(\bb D)$ and $||F||=||f|| = \frac{1}{2\sqrt{1-r^2}}.$  We shall now define two operators on $\cl H$ as follows. Let 
$$
T_{1}=\begin{bmatrix}
r & F\\
0 & M_{\phi}    
\end{bmatrix}^*
\ \ \ {\rm and} \ \ \ 
T_{2}=\begin{bmatrix}
r & 0\\
0 & M_{z}
\end{bmatrix}.
$$ 
We first show that $T_1$ and $T_2$ are near-isometries on $\cl H$. It is immediate that $T_2$ is a near-isometry, since $M_z$ is an isometry on $H^2(\bb D)$. Now, to show  that $T_1$ is a near-isometry, we prove that it is an invertible contraction. This follows once we establish that $T_1^*$ is an invertible contraction. To see this, first note that that $M_\phi$ is invertible on $H^2(\bb D)$ and $r>0$. It follows that $T_1^*$ is both injective and surjective, and hence, $T_1^*$ is invertible. We now prove that $T_1^*$ is a contraction.  
$$
T_1T_1^* =  
\begin{bmatrix}
r & F\\
0 & M_{\phi}    
\end{bmatrix}^*
\begin{bmatrix}
r & F\\
0 & M_{\phi}    
\end{bmatrix}
=
\begin{bmatrix}
r^2 & rF\\
rF^* & F^*F+M_\phi^*M_{\phi}    
\end{bmatrix}.
$$ 
Then for $\lambda\in \mathbb{C}$ and $h\in H^2(\bb D)$, we have
\begin{eqnarray*}
\langle{T_1T_1^*(\lambda\oplus h), \lambda\oplus h}\rangle &=& r^2|\lambda|^2+2rRe(\bar{\lambda}\langle{h,f}\rangle)+|\langle{h,f}\rangle|^2 +||M_\phi h||^2\\
&\le & r^2|\lambda|^2+(r^2|\lambda|^2+|\langle{h,f}\rangle|^2)+|\langle{h,f}\rangle|^2+||\phi||^2_\infty||h||^2\\
&\le & 2r^2|\lambda|^2 +2||h||^2||f||^2+\frac{1}{9}||h||^2\\
&=& 2r^2|\lambda|^2+\left(\frac{1}{2(1-r^2)}+\frac{1}{9}\right)||h||^2\\
&\le &|\lambda|^2+||h||^2,
\end{eqnarray*}
since our assumption $r^2\le \frac{7}{16}<\frac{1}{2}$ implies that $2r^2<1$ and $\frac{1}{2(1-r^2)}+\frac{1}{9}\le 1$. Thus, $T_1^*$ is a contraction. Hence, we conclude that 
$T_1$ is an invertible contraction, which establish that $T_1$ is a near-isometry.  

Next, we show that $T_1^*T_2=T_2T_1^*$, yet $T_1T_2\ne T_2T_1.$ To establish it, observe that 
\begin{equation*}
T_{1}^*T_{2}=
\begin{bmatrix}    
 r^2 & FM_{z}\\
0 & M_{\phi}M_{z}
\end{bmatrix}
\ \ \ {\rm and} \ \ \
T_{2}T_{1}^*=
\begin{bmatrix}
r^2 & rF\\
0 & M_{z}M_{\phi}
\end{bmatrix}.
\end{equation*}
But, by the choice of $f$, we have $rF=FM_z$. Also, $M_\phi M_z=M_zM_{\phi}$. Therefore, $T_1^*T_2=T_2T_1^*$. Now to show $T_1T_2\ne T_2T_1,$ consider
\begin{equation*}
T_{1}T_{2}=
\begin{bmatrix}    
 r^2 & 0\\
rF^* & M_{\phi}^*M_{z}
\end{bmatrix}
\ \ \ {\rm and} \ \ \
T_{2}T_{1}=
\begin{bmatrix}
r^2 & 0\\
M_zF^* & M_{z}M_{\phi}^*
\end{bmatrix}.
\end{equation*}
Clearly, since $\phi$ is non-constant bounded analytic function, $M_\phi^*$ can't commute with $M_z$, which establishes that $T_1T_2\ne T_2T_1.$ Hence, we have near-isometries 
$T_1$ and $T_2$ that satisfy conditions (i) and (ii) of Definition \ref{1 def of doubly twisted near-isometry}, yet fails condition $(iii)$.  

Finally, we use Theorem \ref{1 th equivalent condition for wold} to demonstrate that $(T_1, T_2)$ does not admit a Wold-type decomposition. Recall that it suffices to show that 
$\mathcal{H}_{T_{2},I}$ is not invariant under $T_{1}$. It is fairly straightforward to verify $H_{T_2, I}=\bb C\oplus \{0\}$. For $1\oplus 0\in H_{T_2, I}$, note that 
\begin{equation}\label{1 check wold}
T_1(1\oplus 0)=
\begin{bmatrix}
r& 0\\
F^* & M_{\phi}^{*}
\end{bmatrix}
\begin{bmatrix}
1\\
0
\end{bmatrix}
=
\begin{bmatrix}
r \\
F^{*}(1)
\end{bmatrix}=
\begin{bmatrix}
r \\
f
\end{bmatrix},
\end{equation}
which does not belong $\cl H_{T_2,I}$ as $f\ne 0.$ Thus, $\mathcal{H}_{T_{2},I}$ is not invariant under $T_{1}$; hence, by Theorem \ref{1 th equivalent condition for wold}, $T$ does not admit a Wold-type decomposition.
\end{example}

If $T$ is a bounded below operator on a Hilbert space $\cl H$, then we fix the notation $T^{\#}$ for its left inverse $(T^*T)^{-1}T^*$. The following are some simple yet important properties of a doubly twisted near-isometry. 

\begin{lemma} \label{ 1 le consequence of definiton of doubly twisted near-isometry}
Let $T=(T_{1},\dots, T_{n})$ be a doubly twisted near-isometry. Then for $1\le i,j, k\le n$ with $i\neq j$, we have
 \begin{enumerate}
 \renewcommand{\labelenumi}{(\roman{enumi})}
 \item $T_{k}^{*}U_{ij}=U_{ij}T_{k}^{*}$ 
 \item $T_{k}^{\#}U_{ij}=U_{ij}T_{k}^{\#}$
 \item $U_{ij}=T_{i}^{\#}T_{j}^{\#}T_iT_j$
 \end{enumerate}
\end{lemma}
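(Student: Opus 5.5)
The plan is to take the three parts in order, each time using only the defining relations in Definition \ref{1 def of doubly twisted near-isometry} together with the convention $U_{ji}:=U_{ij}^*$ from the subsequent remark. With that convention, (i)--(iii) of the definition hold for all $i\neq j$ and all $k$. It also suffices to treat the case $i<j$ throughout, since for $i>j$ the unitary $U_{ij}$ is $U_{ji}^*$, and commuting with $U_{ji}$ is equivalent to commuting with its adjoint.

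For (i), I would take adjoints in condition (ii) of the definition. Applying $T_k U_{ji}=U_{ji}T_k$ to the pair $(j,i)$ gives $T_kU_{ij}^*=U_{ij}^*T_k$. Taking adjoints yields $U_{ij}T_k^*=T_k^*U_{ij}$, which is the claim.

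For (ii), write $T_k^{\#}=(T_k^*T_k)^{-1}T_k^*$. By (i) and condition (ii) of the definition, $U_{ij}$ commutes with $T_k^*T_k$. An operator commuting with an invertible operator also commutes with its inverse: from $U(T_k^*T_k)=(T_k^*T_k)U$, multiply on both sides by $(T_k^*T_k)^{-1}$. Here $T_k^*T_k$ is invertible because $T_k$ is bounded below. Since $U_{ij}$ then commutes with both factors of $T_k^{\#}$, it commutes with $T_k^{\#}$.

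For (iii), I would start from condition (iii) of the definition, $T_iT_j=U_{ij}T_jT_i$. Applying $T_i^{\#}T_j^{\#}$ on the left gives
$$
T_i^{\#}T_j^{\#}T_iT_j=T_i^{\#}T_j^{\#}U_{ij}T_jT_i .
$$
By (ii) we may move $U_{ij}$ to the far left, obtaining $U_{ij}T_i^{\#}T_j^{\#}T_jT_i$. Then $T_j^{\#}T_j=I$ and $T_i^{\#}T_i=I$, so the right side collapses to $U_{ij}$.

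The only point needing care is the order of the factors in (iii) and making sure the left inverses cancel in the correct order; the computation above handles this directly. The step I expect to matter most is (ii), namely justifying that commuting with $T_k^*T_k$ passes to its inverse. This is standard, and it uses the bounded-below condition in the definition of a near-isometry.
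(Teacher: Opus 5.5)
Your proof is correct and matches the paper's argument: you take adjoints of condition (ii) of the definition for (i), pass the commutation through $(T_k^*T_k)^{-1}$ for (ii), and cancel left inverses for (iii). The paper's (iii) differs only in moving $U_{ij}$ rightward via $T_iT_j=T_jT_iU_{ij}$ instead of leftward past $T_i^{\#}T_j^{\#}$ via part (ii), which is immaterial; and although your reduction to $i<j$ does not by itself cover (iii), your computation there works verbatim for all $i\neq j$ once the relations are extended via $U_{ji}=U_{ij}^*$.
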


\begin{proof} By definition, $T_{k}U_{ij}=U_{ij}T_{k}$ for all $i,j,k\in I_n$ with $i\neq j$; therefore the adjoint of $T_k$ also commute with $U_{ij}$, which proves part 
$(i)$.
 
To prove $(ii)$, first note that $T_{k}^{*}T_{k}U_{ij}=U_{ij}T_{k}^{*}T_{k}$. Then, 
$$
(T_{k}^{*}T_{k})^{-1}U_{ij}=U_{ij}(T_{k}^{*}T_{k})^{-1},     
$$
which yields
$$
T_{k}^{\#}U_{ij}=(T_{k}^{*}T_{k})^{-1}T_{k}^{*}U_{ij}=(T_{k}^{*}T_{k})^{-1}U_{ij}T_{k}^{*}=U_{ij}(T_{k}^{*}T_{k})^{-1}T_{k}^{*}=U_{ij}T_{k}^{\#}.    
$$
Lastly, to prove $(iii)$, note that $T_{i}T_{j}=U_{ij}T_{j}T_{i}$, which implies that $T_{j}T_{i}U_{ij}=T_{i}T_{j}$. Since $T_i^{\#}$ and $T_j^{\#}$ respectively are the left inverses of $T_i$ and $T_j$, therefore we conclude that $U_{ij}=T_{i}^{\#}T_{j}^{\#}T_iT_j$, which completes the proof.
\end{proof}

Note that the commutativity of $\{U_{ij}\}_{1\leq i, j\leq n}$ was not used in the proof of Lemma \ref{ 1 le consequence of definiton of doubly twisted near-isometry}. 
Indeed, commutativity follows directly from parts $(ii)$ and $(iii)$ of the lemma.

We shall now review the construction of a doubly twisted isometry introduced in \cite{jaydebmansirakshit2022}. We then make a few observations concerning this construction, which motivate and lead to our own construction of a doubly twisted near-isometry.

\begin{definition}
Let $\mathcal{H}$ be a Hilbert space and let $U$  be a unitary operator on $\mathcal{H}$. Then, for $1 \leq j \leq n, \ D_{j}[U]$ denote the diagonal unitary operator on $H^2_{\cl H}(\bb D^n)$ that is given by 
$$
z^{\bs k}\eta \mapsto z^{\bs k} U^{k_j}\eta,
$$
where $k = (k_{1}, k_{2}, \dots, k_{n}) \in \mathbb{N}_{0}^{n}$ and $\eta \in \mathcal{H}$.
 \end{definition}

Before proceeding further, we state the following result from \cite{jaydebmansirakshit2022}, which makes it evident why the operators in the constructions work. The last observation is not mentioned in \cite{jaydebmansirakshit2022} explicitly but it follows by a straightforward verification. 

\begin{lemma}\label{1 prop properties of j-th diagonal operator}
Let $\mathcal{H}$ be a Hilbert space. Let $U,\widetilde{U}$ be commuting unitaries on $\mathcal{H}$.Then on $H^{2}_{\mathcal{H}}(\mathbb{D}^{n})$, the following statements hold for $1\le i,j,r\le n$. 
 \begin{enumerate}\renewcommand{\labelenumi}{(\roman{enumi})}
 \item $\left(D_{j}[U]\right)^{*}=D_{j}[U^{*}]$.
 \item $D_{i}[U]D_{j}[\widetilde{U}]=D_{j}[\widetilde{U}]D_{i}[U]$.
 \item $M_{z_{i}}D_{j}[U]=D_{j}[U]M_{z_{i}}$ if $i\neq j$.
 \item $M_{z_{i}}^{*}D_{i}[U]=(I_{{H}^{2}_{\mathcal{H}}(\mathbb{D}^{n})}\otimes U)D_{i}[U]M_{z_{i}}^{*}$.
 \item $D_{i}[U](I_{{H}^{2}(\mathbb{D}^{n})}\otimes T)=(I_{{H}^{2}(\mathbb{D}^{n})}\otimes T)D_{i}[U]$ for every $T\in B(\cl H)$ such that $TU=UT$. 
\end{enumerate}
\end{lemma}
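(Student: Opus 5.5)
The plan is to verify each identity on the orthonormal basis $\{z^{\bs k}\eta\}$ of $H^2_{\cl H}(\bb D^n)$, where $\bs k\in\bb N_0^n$ and $\eta$ ranges over an orthonormal basis of $\cl H$, and then extend by linearity and continuity. All operators involved are bounded: $D_j[U]$ is unitary, $M_{z_i}$ is an isometry, and $I\otimes T$ is bounded. Hence agreement on the spanning set $\{z^{\bs k}\eta: \bs k\in \bb N_0^n,\ \eta\in\cl H\}$ suffices. Throughout I will use the identification $H^2_{\cl H}(\bb D^n)\cong H^2(\bb D^n)\otimes\cl H$, under which $z^{\bs k}\eta\mapsto z^{\bs k}\otimes\eta$.

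For (i), compute inner products: $\langle D_j[U]z^{\bs k}\eta, z^{\bs m}\xi\rangle=\delta_{\bs k\bs m}\langle U^{k_j}\eta,\xi\rangle=\delta_{\bs k\bs m}\langle \eta,U^{*k_j}\xi\rangle$. This equals $\langle z^{\bs k}\eta, D_j[U^*]z^{\bs m}\xi\rangle$, so $(D_j[U])^*=D_j[U^*]$.

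For (ii), both products send $z^{\bs k}\eta$ to $z^{\bs k}$ times a vector in $\cl H$. One side gives $U^{k_i}\widetilde U^{k_j}\eta$ and the other gives $\widetilde U^{k_j}U^{k_i}\eta$, and these agree because $U$ and $\widetilde U$ commute.

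For (iii), note that $M_{z_i}$ raises $k_i$ by one and leaves $k_j$ unchanged when $i\neq j$. Hence both $M_{z_i}D_j[U]$ and $D_j[U]M_{z_i}$ send $z^{\bs k}\eta$ to $z^{\bs k+e_i}U^{k_j}\eta$.

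Part (iv) is the only step that needs care, because of the shift in exponent. Recall that $M_{z_i}^*(z^{\bs k}\eta)=z^{\bs k-e_i}\eta$ when $k_i\ge1$, and $M_{z_i}^*(z^{\bs k}\eta)=0$ when $k_i=0$.
\begin{itemize}
\item If $k_i=0$, both sides vanish.
\item If $k_i\ge1$, the left side is $M_{z_i}^*(z^{\bs k}U^{k_i}\eta)=z^{\bs k-e_i}U^{k_i}\eta$. On the right, $M_{z_i}^*$ gives $z^{\bs k-e_i}\eta$, then $D_i[U]$ gives $z^{\bs k-e_i}U^{k_i-1}\eta$, and then $I\otimes U$ gives $z^{\bs k-e_i}U^{k_i}\eta$. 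The two sides agree.
\end{itemize}

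For (v), apply both sides to $z^{\bs k}\eta$: the left side gives $z^{\bs k}U^{k_i}T\eta$ and the right side gives $z^{\bs k}TU^{k_i}\eta$. These are equal because $TU=UT$ implies $TU^{m}=U^{m}T$ for every $m\ge0$.

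I expect no genuine obstacle. The main points needing attention are the edge case $k_i=0$ in (iv) and the justification that agreement on the basis extends to the whole space, which follows from boundedness of all operators involved.
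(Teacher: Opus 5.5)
Your proposal is correct and is exactly the routine verification the paper has in mind: the paper gives no proof of its own, quoting (i)--(iv) from the work of Rakshit, Sarkar and Suryawanshi and saying that (v) follows by a straightforward check. Your computation on the vectors $z^{\bs k}\eta$ carries out that check, including the case $k_i=0$ in (iv) (where you rightly read the identity factor as $I_{H^2(\bb D^n)}\otimes U$) and the extension to all of $H^2_{\cl H}(\bb D^n)$ by boundedness and density.
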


The following result from \cite{jaydebmansirakshit2022} gives a recipe for constructing doubly twisted isometries from a set of unitaries.  

\begin{prop}\label{1 prop example of doubly twisted isometry}
Let $\mathcal{H}$ be a Hilbert space, and let $\{U_{ij}:i,j=1,\dots,n,i\neq j\}$ be a commuting family of unitaries on $\mathcal{H}$ such that $U_{ji}=U_{ij}^{*}$ for all $i\neq j$. Fix $m\in \{1,\dots,n\}$ and consider $(n-m)$ unitary operators $\{U_{m+1},\dots, U_{n}\}$ in $\mathcal{B(H)}$ such that 
$$U_{i}U_{j}=U_{ij}U_{j}U_{i} \quad \text{and} \quad U_{i}U_{pq}=U_{pq}U_{i}$$
for all $m+1\leq i\neq j\leq n,$ and $1\leq p\neq q\leq n$. Then there exists an $n$-tuple of doubly twisted isometry on $H^{2}_{\mathcal{H}}(\mathbb{D}^m)$ with respect to $\{I\otimes U_{ij}\}_{i<j}$ defined as
$$
M_{i} =
\begin{cases}
M_{z_1} & if \ i=1\\
M_{z_{i}}\left(D_{1}[U_{i1}]\dots D_{i-1}[U_{ii-1}]\right) &\text{if } \ 2\leq i\leq m\\
\left(D_{1}[U_{i1}]\dots D_{m}[U_{im}]\right)\left(I_{H^2(\bb D^m)}\otimes U_{i}\right)& \text{if } \ m+1\leq i\leq n.
\end{cases}
$$
Moreover $M_{1},\dots,M_{m}$ are shifts and $M_{m+1},\dots,M_{n}$ are unitaries on $H^2_{\cl H}(\bb D^m)$.
\end{prop}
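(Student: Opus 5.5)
Proposition \ref{1 prop example of doubly twisted isometry} is quoted from \cite{jaydebmansirakshit2022}, but it can be checked directly on the orthonormal basis $\{z^{\bs k}\eta\}$ of $H^2_{\cl H}(\bb D^m)$, with $\eta$ running over an orthonormal basis of $\cl H$. I would first record how each $M_i$ acts on basis vectors. For $i\le m$,
$$
M_i(z^{\bs k}\eta)=z^{\bs k+e_i}\,U_{i1}^{k_1}\cdots U_{i,i-1}^{k_{i-1}}\eta ,
$$
and for $i>m$,
$$
M_i(z^{\bs k}\eta)=z^{\bs k}\,U_{i1}^{k_1}\cdots U_{im}^{k_m}U_i\eta .
$$
The order of the unitary factors does not matter: the $U_{pq}$ commute with one another, and $U_i$ commutes with every $U_{pq}$.

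\textbf{Structure of each $M_i$.} Each $D_j[\cdot]$ is unitary and $M_{z_i}$ is an isometry, so every $M_i$ is an isometry. For $i>m$ the operator $M_i$ is a product of unitaries, hence unitary. For $i\le m$, the formula above sends $z^{\bs k}\cl H$ unitarily onto $z^{\bs k+e_i}\cl H$. Therefore $\ker M_i^*$ is the closed span of the $z^{\bs k}\cl H$ with $k_i=0$, and $\bigoplus_{\ell\ge 0}M_i^\ell(\ker M_i^*)$ is the whole space. So $M_i$ is a shift. Since isometries are near-isometries, all the near-isometry hypotheses hold.

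\textbf{Twisted relations.} Next I check conditions (i) and (ii) of Definition \ref{1 def of doubly twisted near-isometry} with $I\otimes U_{ij}$; by \cite[Lemma 3.1]{jaydebmansirakshit2022} these suffice for isometries, though I would verify (iii) directly anyway.

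Condition (ii) holds because $I\otimes U_{pq}$ commutes with every $M_{z_j}$, with every $D_j[U_{rs}]$ (Lemma \ref{1 prop properties of j-th diagonal operator}(v)), and with $I\otimes U_i$.

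For condition (iii) I compute $M_iM_j(z^{\bs k}\eta)$ and $M_jM_i(z^{\bs k}\eta)$ with $i<j$, and compare the exponents of the unitary factors. Take $i<j\le m$. In $M_iM_j$, the $M_i$ step sees $k_j$ already raised to $k_j+1$, but $M_i$ only reads indices below $i$, so nothing changes there. In $M_jM_i$, the $M_j$ step reads the raised $k_i+1$. This leaves exactly an extra factor $U_{ji}=U_{ij}^*$, which gives $M_iM_j=U_{ij}M_jM_i$.

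The case $i\le m<j$ works the same way, using $U_{ji}=U_{ij}^*$: the extra factor comes from $M_j$ reading the raised $k_i$. In the case $m<i<j$ there is no shift, and the relation reduces to the hypothesis $U_iU_j=U_{ij}U_jU_i$.

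\textbf{Main obstacle.} The real work is condition (i), $M_i^*M_j=U_{ij}^*M_jM_i^*$. I would compute $M_i^*$ on basis vectors explicitly:
$$
M_i^*(z^{\bs k}\eta)=0 \ \text{ if } k_i=0,\qquad M_i^*(z^{\bs k}\eta)=z^{\bs k-e_i}\bigl(U_{i1}^{k_1}\cdots U_{i,i-1}^{k_{i-1}}\bigr)^*\eta \ \text{ otherwise}.
$$
The unitary exponents here involve only $k_1,\dots,k_{i-1}$, so they are unaffected by the shift in coordinate $i$.

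Then I compare both sides on each basis vector, case by case as above. The vanishing sets agree, because $M_j$ does not change $k_i$ when $j\ne i$. On the remaining vectors the leftover unitary factor is $U_{ij}^{\pm1}$, coming from the index $j$ reading the lowered $k_i-1$. For $j>m$, one uses Lemma \ref{1 prop properties of j-th diagonal operator}(iv) together with the unitarity of $U_j$. The careful bookkeeping of these exponents is the only delicate point.
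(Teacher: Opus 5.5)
Your proposal is correct. The paper gives no proof of its own: the proposition is quoted from \cite{jaydebmansirakshit2022}, and the paper only points to Lemma \ref{1 prop properties of j-th diagonal operator} as the reason the construction works.

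The cited argument is operator-algebraic. One writes $M_i^*M_j$ as a word in $M_{z_i}^*$, $M_{z_j}$, the $D_l[\cdot]$ and $I\otimes U_j$, and moves $M_{z_i}^*$ to the right. The only nontrivial exchange is with $D_i[U_{ji}]$, where part (iv) of that lemma produces the factor $I\otimes U_{ji}=I\otimes U_{ij}^*$. Parts (i)--(iii) and (v), together with the double commutativity of $M_{z_i}$ and $M_{z_j}$, handle every other commutation.

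Your computation on the vectors $z^{\bs k}\eta$ is the coordinate form of the same argument. Your observation that $M_j$ reads the lowered exponent $k_i-1$ is exactly identity (iv). So the two routes differ only in bookkeeping. The operator identities are more compact, and the paper indicates they carry over to Theorem \ref{doubly-ex1}. Your version makes explicit where each hypothesis enters.

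In particular, in the case $m<i<j$ of condition (i) you need $U_i^*U_j=U_{ij}^*U_jU_i^*$. This follows from $U_iU_j=U_{ij}U_jU_i$ only because of unitarity. Unitarity of either $U_i$ or $U_j$, together with the commutation hypotheses, suffices, so your appeal to the unitarity of $U_j$ is fine. This is precisely the observation in the remark following the proposition. It is also why, in Theorem \ref{doubly-ex1}, the relation $T_i^*T_j=U_{ij}^*T_jT_i^*$ must be imposed as a separate hypothesis.
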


\begin{remark} We make the following remark concerning the unitaries $U_i, m+1\le i\le n$, appearing in Proposition \ref{1 prop example of doubly twisted isometry}. The condition $U_iU_j=U_{ij}U_jU_i$ in the proposition automatically implies $U_i^*U_j=U_{ij}^*U_jU^{*}_i$, since $U_i$ and $U_j$ are unitaries. 
Moreover, it is precisely the condition $U_i^*U_j=U_{ij}^*U_jU^{*}_i$ for $m+1\le i\ne j\le n$ that ensures $M_i^*M_j=(I_{H^2(\bb D^m)}\otimes U_{ij}^*)M_jM_i^*$ for all $1\le i\ne j \le n$.  

However, the implication
$$
U_iU_j=U_{ij}U_jU_i \ \ \implies \ \ U_i^*U_j=U_{ij}^*U_jU_i^*
$$ 
need not hold if $U_i$ and $U_j$ are merely assumed to be isometries. Nonetheless, if each $U_i$ is an isometry commuting with every $U_{jk}$, then the operators $M_i$ remain isometries and continue to commute with $I_{H^2(\bb D^m)}\otimes U_{jk}$. Consequently, one can verify that the arguments used by the authors of \cite{jaydebmansirakshit2022} to prove Proposition \ref{1 prop example of doubly twisted isometry} actually shows that $(M_1, \dots, M_n)$ is a doubly twisted isometry with respect to the unitaries $I_{H^2(\bb D^m)}\otimes U_{ij}$, even when one starts with isometries  $U_{m+1}, \dots, U_n$ provided one imposes the additional condition $U_{i}^*U_{j}=U_{ij}^*U_{j}U_{i}^*$ in place of $U_iU_j=U_{ij}U_jU_i$ for 
$m+1\le i\ne j\le n$, noting that the latter condition is in any case implied by the former (\cite[Lemma 3.1]{jaydebmansirakshit2022}).  
\end{remark}
 
 The observation in the above remark plays a key role in the construction below of a doubly twisted near-isometry, which constitutes the main result of this section. Its proof follows closely the ideas of Proposition \ref{1 prop example of doubly twisted isometry} and is therefore omitted.

\begin{theorem}\label{doubly-ex1}
 Let $\cl H$ be a Hilbert space and let $\{U_{ij}:1\leq i\ne j\leq n\}$ be a commuting family of unitaries on $\mathcal{H}$ such that $U_{ji}=U_{ij}^{*}$ for 
 all $i\neq j.$ For a fixed $m\in I_{n}$, let $\{T_{m+1},\dots ,T_{n}\}$ be $n-m$ near-isometries on $\cl H$ such that 
 $T_{i}T_{j}=U_{ij}T_{j}T_{i}$, $T_{i}^{*}T_{j}=U_{ij}^{*}T_{j}T_{i}^{*}$, $T_{i}U_{pq}=U_{pq}T_{i}$ for all $m+1\leq i\neq j\leq n$ and $1\leq p\neq q\leq n$.
Then the operators $M_i$ on $H^2_{\cl H}(\bb D^m)$ defined as
 $$M_{i} =
\begin{cases}
M_{z_1} &\text{if } \ i=1\\
M_{z_{i}}\left(D_{1}[U_{i1}]\dots D_{i-1}[U_{ii-1}]\right) &\text{if } \ 2\leq i\leq m\\
\left(D_{1}[U_{i1}]\dots D_{m}[U_{im}]\right)\left(I\otimes T_{i}\right) & \text{if} \ m+1\leq i\leq n
\end{cases}$$
constitute a doubly twisted near-isometry $(M_1, \dots, M_n)$ with respect to unitaries $\{I_{H^2(\bb D^m)}\otimes U_{ij}: 1\le i\ne j\le n\}$. Moreover for $1\leq i\leq m, M_{i}$ is a shift operator, and for $m+1\leq i\leq n, M_{i}$ is an invertible operator if $T_{i}$ is an invertible operator on $\mathcal{H}$.
\end{theorem}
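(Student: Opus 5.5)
The verification splits into three blocks, carried out with the identities of Lemma \ref{1 prop properties of j-th diagonal operator}, all of which are unitary/diagonal manipulations.

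\emph{Near-isometry of each $M_i$.} For $1\le i\le m$, $M_i$ is $M_{z_i}$ composed with diagonal unitaries that commute with $M_{z_i}$ (part (iii), since the $D_j$ involve only $j<i$). Hence $M_i$ is an isometry, and in particular a near-isometry. Its wandering subspace is $\{f: f \text{ independent of } z_i\}$, and $H^2_{\cl H}(\bb D^m)=\bigoplus_k M_i^k\cl W$, so $M_i$ is a shift.

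For $i>m$, write $M_i=D_i'(I\otimes T_i)$ with $D_i'=D_1[U_{i1}]\cdots D_m[U_{im}]$ unitary. By (v), $D_i'$ commutes with $I\otimes T_i$, because $T_i$ commutes with every $U_{pq}$. Therefore $M_i^{\ell}=D_i'^{\ell}(I\otimes T_i^{\ell})$ and $M_i^{*\ell}=D_i'^{*\ell}(I\otimes T_i^{*\ell})$. The bounds $\delta\|x\|\le\|M_ix\|\le\|x\|$ transfer directly from $T_i$. Moreover $M_i^{*\ell}M_i^{\ell+1}=D_i'(I\otimes T_i^{*\ell}T_i^{\ell+1})$, whose range lies in $D_i'(H^2\otimes T_i\cl H)=M_i(H^2_{\cl H})$. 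If $T_i$ is invertible, then so is $M_i$.

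\emph{Commutation with $I\otimes U_{pq}$.} This is immediate from (v) together with the commutativity of the family $\{U_{pq}\}$ and of the $T_i$ with it.

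\emph{Twisting relations (i) and (iii) for $i<j$.} I would split into three cases.

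Case $i<j\le m$: here $T$ plays no role, and the computation is exactly that of Proposition \ref{1 prop example of doubly twisted isometry}, using (iii) and (iv).

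Case $i\le m<j$: I would use (iii) and (iv). Commuting $M_{z_i}^*$ past $D_i[U_{ji}]$ produces the factor $I\otimes U_{ji}=I\otimes U_{ij}^*$, and all other factors commute. This yields $M_i^*M_j=(I\otimes U_{ij}^*)M_jM_i^*$. Similarly, moving $M_{z_i}$ through $D_i[U_{ji}]$ gives $M_iM_j=(I\otimes U_{ij})M_jM_i$. These checks can be done on monomials $z^{\bs k}\eta$.

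Case $m<i<j$: all the $D$'s commute with each other and with $I\otimes T_i$, $I\otimes T_j$, and $I\otimes T_i^*$ by (ii), (v) and (i). So $M_i^*M_j=D_i'^*D_j'(I\otimes T_i^*T_j)$ and $M_jM_i^*=D_j'D_i'^*(I\otimes T_jT_i^*)$. The hypotheses $T_i^*T_j=U_{ij}^*T_jT_i^*$ and $T_iT_j=U_{ij}T_jT_i$ then give (i) and (iii) of Definition \ref{1 def of doubly twisted near-isometry}.

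The main obstacle is the mixed case $i\le m<j$, specifically the relation for $M_i^*M_j$. It requires care with the order of the $D$-factors in $M_i$ against those in $M_j$, and with the extra $I\otimes U$ factor that (iv) produces. I would verify it directly on basis vectors $z^{\bs k}\eta$, treating the cases $k_i=0$ and $k_i\ge1$ separately.
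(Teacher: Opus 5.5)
Your proposal is correct and takes essentially the paper's route: the paper omits the proof, saying it follows the computation of Proposition~\ref{1 prop example of doubly twisted isometry}, which is a direct check of the relations via the identities of Lemma~\ref{1 prop properties of j-th diagonal operator}. The only new ingredients are the ones you supply: the near-isometry property passes from $T_i$ to $M_i=D_i'(I\otimes T_i)$, and the extra hypothesis $T_i^*T_j=U_{ij}^*T_jT_i^*$ is used in the case $m<i<j$. Your mixed-case bookkeeping also checks out: part (iv) of that lemma gives $M_{z_i}^*D_i[U_{ji}]=(I\otimes U_{ji})D_i[U_{ji}]M_{z_i}^*$, which produces the factor $I\otimes U_{ij}^*$. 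So no separate treatment of $k_i=0$ is actually needed.
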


We conclude this section with some simple yet fundamental observations about a doubly twisted near-isometry. 

\begin{lemma}\label{1 le properties of dtni on wa}
 Let $T=(T_1, T_2, \dots, T_n)$ be a doubly twisted near-isometry and let $A\subseteq I_{n}$. Then for $j\in I_{n}\setminus A$, we have
\begin{enumerate}
\renewcommand{\labelenumi}{(\roman{enumi})}
\item $\mathcal{W}_{A}$ reduces $T_{j}$; 
\item $\mathcal{W}_{A}\ominus T_{j}\mathcal{W}_{A}=\mathcal{W}_{A\cup \{j\}}$;
\item $\mathcal{W}_{A}$ reduces $U_{ij}$ and $ U_{ij}\mathcal{W}_{A}=\mathcal{W}_{A}$ for $1\leq i\neq j \leq n$; 
\item $T_{j}$ is a near-isometry on $\mathcal{W}_{A}$;
\end{enumerate} 
where $\cl W_A=\bigcap\limits_{i\in A}ker T_i^*$.   
\end{lemma}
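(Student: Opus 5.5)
The plan is to prove (i) first, since (ii) and (iv) follow from it quickly, and (iii) is independent of the rest. Throughout, I will use the remark after Definition \ref{1 def of doubly twisted near-isometry}: setting $U_{ji}:=U_{ij}^*$, the relations $T_i^*T_j=U_{ij}^*T_jT_i^*$, $T_kU_{ij}=U_{ij}T_k$ and $T_iT_j=U_{ij}T_jT_i$ hold for all $i\neq j$. This means I never need to split into the cases $i<j$ and $i>j$.

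For (i), fix $j\notin A$, $x\in\cl W_A$ and $i\in A$, so that $i\ne j$.
\begin{itemize}
\item Invariance under $T_j$: we have $T_i^*T_jx=U_{ij}^*T_jT_i^*x=0$. Hence $T_jx\in ker T_i^*$ for every $i\in A$, that is, $T_j\cl W_A\subseteq \cl W_A$.
\item Invariance under $T_j^*$: I take adjoints in $T_jT_i=U_{ji}T_iT_j$ to get $T_i^*T_j^*=T_j^*T_i^*U_{ji}^*$. By Lemma \ref{ 1 le consequence of definiton of doubly twisted near-isometry}(i), $U_{ji}^*$ commutes with $T_i^*$. Therefore
$$T_i^*T_j^*x=T_j^*U_{ji}^*T_i^*x=0,$$
so $T_j^*\cl W_A\subseteq\cl W_A$.
\end{itemize}
The second step is the only delicate one, because it is exactly where condition (iii) of the definition (the commutation $T_iT_j=U_{ij}T_jT_i$) is needed, in contrast with the isometric case. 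Once both inclusions hold, $\cl W_A$ reduces $T_j$.

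For (iii), Lemma \ref{ 1 le consequence of definiton of doubly twisted near-isometry}(i) shows that both $U_{ij}$ and $U_{ij}^*$ commute with every $T_k^*$. Consequently each of them maps $ker T_k^*$ into itself, and hence maps $\cl W_A$ into itself. So $\cl W_A$ reduces $U_{ij}$. Moreover, $U_{ij}^*\cl W_A\subseteq\cl W_A$ gives $\cl W_A\subseteq U_{ij}\cl W_A$. Combined with $U_{ij}\cl W_A\subseteq\cl W_A$, this yields $U_{ij}\cl W_A=\cl W_A$.

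Part (iv) follows at once from (i) together with Lemma \ref{1 shift on reducing subspace}(i).

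For (ii), let $S=T_j|_{\cl W_A}$. Since $S$ is bounded below, $T_j\cl W_A$ is closed, and
$$\cl W_A\ominus T_j\cl W_A=ker S^*.$$
Because $\cl W_A$ reduces $T_j$ by (i), we have $S^*=T_j^*|_{\cl W_A}$. Hence
$$ker S^*=ker T_j^*\cap\cl W_A=\bigcap_{i\in A\cup\{j\}}ker T_i^*=\cl W_{A\cup\{j\}}.$$
This is an immediate consequence of (i), so I expect no difficulty beyond the $T_j^*$-invariance step in (i).
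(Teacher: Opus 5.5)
Your proposal is correct and follows the paper's proof essentially step for step. You get $T_j$-invariance of each $\ker T_i^*$ from condition (i) of the definition, and $T_j^*$-invariance from the adjoint of condition (iii) together with the commutation of the $U_{ij}$ with the $T_k^*$; you use the same argument for (iii) and Lemma \ref{1 shift on reducing subspace}(i) for (iv), while your treatment of (ii) via $\ker (T_j|_{\cl W_A})^* = \ker T_j^* \cap \cl W_A$ is only a compact repackaging of the paper's elementwise argument (the closedness of $T_j\cl W_A$ you mention is not actually needed there).
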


\begin{proof} Suppose $x\in \mathcal{W}_{A}$, then $T_{i}^{*}x=0$ for all $i\in A$. Let $j\in I_{n}\setminus A$. Then $T_{i}^{*}T_{j}x=U_{ij}^{*}T_{j}T_{i}^{*}x=0$ and 
$T_i^*T_j^*x=U_{ij}T_j^*T_i^*x=0$. This shows that $ker(T_i^*)$ reduces $T_j$ for all $i\in A$. Thus, $\mathcal{W}_{A}$ reduces $T_j$ for all $j\in I_{n}\setminus A$. 

To prove $(ii)$, let $x\in \mathcal{W}_{A}\ominus T_{j}\mathcal{W}_{A}$. Then for any $y\in \mathcal{W}_{A}$, we have $0=\langle x,T_{j}y \rangle=\langle T_{j}^{*}x,y\rangle$. Thus $T_{j}^{*}x\in \mathcal{W}_{A}^{\perp}$. Also from part $(i)$ of this lemma, $\mathcal{W}_{A}$ reduces $T_{j}$, which implies 
$T_{j}^{*}x\in \mathcal{W}_{A}$; thus, $T_{j}^{*}x\in \mathcal{W}_{A}\bigcap \mathcal{W}_{A}^{\perp}$. This gives $T_{j}^{*}x=0$, and hence $x\in \mathcal{W}_{A}\bigcap ker T_{j}^{*}=\cl W_{A\cup\{j\}}$. To prove the other containment, let $x\in \mathcal{W}_{A\cup\{j\}}$. Then $x\in \mathcal{W}_{A}$ and $x\in ker T_{j}^{*}$.  This means, $T_j^*x=0$, which implies that $\langle x,T_{j}y \rangle=\langle T_{j}^{*}x,y\rangle=0$ for all $y\in \mathcal{W}_{A}$. Therefore, $x\perp T_{j}\mathcal{W}_{A}$, which yields $x\in \mathcal{W}_{A}\ominus T_{j}\mathcal{W}_A$.

To prove $(iii)$, let $x\in \mathcal{W}_{A}$. Then $T_{k}^{*}x=0$ for all $k\in A$. Hence $T_{k}^{*}U_{ij}x=U_{ij}T_{k}^{*}x=0$ for all $k\in A$. Thus $U_{ij}x \in ker T_{k}^{*}$, which implies $U_{ij}(ker T_{k}^{*})\subseteq kerT_{k}^{*}$ , which further gives $U_{ij}\mathcal{W}_{A}\subseteq \mathcal{W}_{A}$. Similarly $T_{k}^{*}U_{ij}^{*}x=U_{ij}^{*}T_{k}^{*}x=0$ for all $k\in A$. Hence $U_{ij}^{*}x\in ker T_{k}^{*}$, so $U_{ij}^{*}ker T_{k}^{*}\subseteq ker T_{k}^{*}$, which gives $U_{ij}^{*}\mathcal{W}_{A}\subseteq \mathcal{W}_{A}$. Thus, $\mathcal{W}_{A}$ reduces $U_{ij}$. Note that, we also have $\cl W_A\subseteq U_{ij}\cl W_A$, since 
$U_{ij}$ is a unitary. Hence, $U_{ij}\cl W_A=\cl W_A$.

Lastly, $(iv)$ follows from Lemma \ref{1 shift on reducing subspace}, since $\mathcal{W}_{A}$ reduces $T_{j}$.
\end{proof}

\section{A Wold-type decomposition for doubly twisted near-isometries}\label{doubly-twisted}
In this section, we prove that every doubly twisted near-isometry admits a Wold-type decomposition, as defined in Section \ref{char-Wold}. This is established in Theorem \ref{1 th wold decom for dtni by induction}. We begin by listing some technical identities used repeatedly in its proof.

\begin{lemma}\label{1 lemma to show wold for dtni by induction}
Let $T_{1}\in \mathcal{B}(\mathcal{H})$ be a bounded below operator. Suppose $\cl W$ is a closed subspace of $\mathcal{H}$ and $T_{2}\in \mathcal{B(H)}$ is an injective operator. Assume there exists a unitary operator $U$ on $\mathcal{H}$ satisfying the following conditions.
\begin{enumerate}
\renewcommand{\labelenumi}{(\roman{enumi})}
 \item $T_{1}U=UT_{1},T_2U=UT_2$
 \item $T_1T_2=UT_2T_1$
 \item $T_{1}\cl W\subseteq \cl W$
 \item $T_{2}^{k}\cl W\perp T_{2}^{m}\mathcal{H}$ whenever $0\leq k< m$
\item $U(\cl W)=\cl W$.
\end{enumerate}
 
\noindent Then the following holds.
\begin{enumerate}
\renewcommand{\labelenumi}{(\roman{enumi})}
\item $T_{1}^{k_{1}}T_{2}^{k_{2}}=U^{k_{1}k_{2}}T_{2}^{k_{2}}T_{1}^{k_1}$ for all $k_{1},k_{2}\geq 0$.
\item $T_{1}(\bigcap\limits_{k_{2}\geq 0}T_{2}^{k_{2}}\mathcal{R})=\bigcap\limits_{k_{2}\geq 0}T_{1}T_{2}^{k_{2}}\mathcal{R}$ for any subspace 
$\mathcal{R}$ of $\mathcal{H}$.
\item $T_{1}\left(\bigoplus\limits_{k_{2}\geq 0}T_{2}^{k_{2}}\cl W\right)=\bigoplus\limits_{k_{2}\geq 0}T_{1}T_{2}^{k_{2}}\cl W$.
\item $T_{1}\left(\bigoplus\limits_{k_{2}\geq 0}T_{2}^{k_{2}}\cl W\bigoplus  \bigcap\limits_{k_{2}\geq 0}T_{2}^{k_{2}}\mathcal{H}\right)=\bigoplus\limits_{k_{2}\geq 0}T_{1}T_{2}^{k_{2}}\cl W\bigoplus   \bigcap\limits_{k_{2}\geq 0}T_{1}T_{2}^{k_{2}}\mathcal{H}$.
\item $\bigoplus\limits_{k_{2}\geq 0}T_{2}^{k_{2}}\left(\bigcap\limits_{k_{1}\geq 0}T_{1}^{k_{1}}\cl W\right)=\bigcap\limits_{k_{1}\geq 0}T_{1}^{k_{1}}\left(\bigoplus\limits_{k_{2}\geq 0} T_{2}^{k_{2}}\cl W\right)$.
\end{enumerate}
\end{lemma}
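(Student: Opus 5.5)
Part (i) comes first, by a double induction. From $T_1T_2=UT_2T_1$ and the fact that $U$ commutes with both $T_1$ and $T_2$ (hypothesis (i)), induction on $k_2$ gives $T_1T_2^{k_2}=U^{k_2}T_2^{k_2}T_1$. The step is
$$T_1T_2^{k_2+1}=(T_1T_2^{k_2})T_2=U^{k_2}T_2^{k_2}T_1T_2=U^{k_2}T_2^{k_2}UT_2T_1=U^{k_2+1}T_2^{k_2+1}T_1 .$$
A second induction on $k_1$ then moves each factor $T_1$ across $T_2^{k_2}$, producing one factor $U^{k_2}$ each time. Because $U$ commutes with everything involved, these factors collect into $U^{k_1k_2}$.

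For (ii), the inclusion $T_1(\bigcap_{k} T_2^{k}\mathcal R)\subseteq \bigcap_k T_1T_2^k\mathcal R$ is trivial. For the reverse, take $y=T_1T_2^{k}r_k$ for every $k$, with $r_k\in\mathcal R$. Since $T_1$ is bounded below, it is injective, so all the vectors $x=T_2^k r_k$ coincide. Hence $x\in\bigcap_k T_2^k\mathcal R$ and $y=T_1x$. (If $\mathcal R$ is only a subspace, not necessarily closed, this argument still works because no closures are involved.)

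For (iii), hypothesis (iv) applied with $m=k+1,\dots$ shows that the spaces $T_2^{k}\mathcal W$ are mutually orthogonal, so the left-hand sum makes sense. The main point is to show that the spaces $T_1T_2^{k}\mathcal W$ are also mutually orthogonal and closed. By part (i), $T_1T_2^k\mathcal W=U^kT_2^kT_1\mathcal W\subseteq U^kT_2^k\mathcal W=T_2^kU^k\mathcal W=T_2^k\mathcal W$, using (iii), the commutation $UT_2=T_2U$, and (v). So $T_1T_2^k\mathcal W\subseteq T_2^k\mathcal W$, and orthogonality is inherited.

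To pass $T_1$ through the orthogonal sum, I use that $T_1$ is bounded below, so it maps closed subspaces onto closed subspaces and commutes with norm limits. Write an element of the sum as $\sum_k x_k$ with $x_k\in T_2^k\mathcal W$. Then $T_1\sum_k x_k=\sum_k T_1x_k$, and this converges because the terms lie in the mutually orthogonal spaces $T_2^k\mathcal W$. This gives $\subseteq$. Conversely, suppose $\sum_k T_1x_k$ converges, so $\sum_k\|T_1x_k\|^2<\infty$. The lower bound $\|T_1x_k\|\ge\delta\|x_k\|$ gives $\sum_k\|x_k\|^2<\infty$, so $\sum_k x_k$ lies in the left-hand sum and maps to the given element.

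For (iv), I combine (ii) with $\mathcal R=\mathcal H$ and (iii), and need $T_1(X\oplus Y)=T_1X\oplus T_1Y$. This requires $T_1X\perp T_1Y$, where $X=\bigoplus_k T_2^k\mathcal W$ and $Y=\bigcap_k T_2^k\mathcal H$. Again $T_1T_2^k\mathcal W\subseteq T_2^k\mathcal W$. Also $T_1Y\subseteq Y$, because $T_1T_2^m\mathcal H=U^mT_2^mT_1\mathcal H\subseteq T_2^m\mathcal H$. Hypothesis (iv) gives $T_2^k\mathcal W\perp T_2^{k+1}\mathcal H\supseteq Y$, so $T_1X\perp T_1Y$. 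Injectivity of $T_1$ then gives the equality.

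For (v), the inclusion $\subseteq$ follows from $T_2^{k_2}T_1^{k_1}\mathcal W=U^{-k_1k_2}T_1^{k_1}T_2^{k_2}\mathcal W$ together with $U$-invariance of $T_2^{k_2}\mathcal W$ (from (v) and commutation). This shows each $T_2^{k_2}(\bigcap_{k_1}T_1^{k_1}\mathcal W)\subseteq\bigcap_{k_1}T_1^{k_1}T_2^{k_2}\mathcal W\subseteq\bigcap_{k_1}T_1^{k_1}X$; closedness of the right side, which follows from bounded-belowness, finishes this inclusion. For $\supseteq$, I iterate (iii): $T_1^{k_1}X=\bigoplus_{k_2}T_1^{k_1}T_2^{k_2}\mathcal W$, where the $k_2$-th summand lies in $T_2^{k_2}\mathcal W$. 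Take $x$ in every $T_1^{k_1}X$ and project it onto the orthogonal pieces $T_2^{k_2}\mathcal W$. The uniqueness of the decomposition forces the $k_2$-component to lie in $T_1^{k_1}T_2^{k_2}\mathcal W=T_2^{k_2}U^{k_1k_2}T_1^{k_1}\mathcal W$ for every $k_1$. The main obstacle is to conclude that this component lies in $T_2^{k_2}(\bigcap_{k_1}T_1^{k_1}\mathcal W)$. For this I use injectivity of $T_2$ to pull back: the preimage is the same vector in $U^{k_1k_2}T_1^{k_1}\mathcal W$ for all $k_1$. Since $U$ commutes with $T_1$ and preserves $\mathcal W$, we have $U^{k_1k_2}T_1^{k_1}\mathcal W=T_1^{k_1}U^{k_1k_2}\mathcal W=T_1^{k_1}\mathcal W$, so the preimage lies in $\bigcap_{k_1}T_1^{k_1}\mathcal W$, as required.
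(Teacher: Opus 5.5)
The paper states this lemma without proof, so your argument has to stand on its own. Parts (i) and (ii) are fine. In (iii)--(v), however, you tacitly assume that every $T_2^{k}\mathcal W$ is closed. This does not follow from the hypotheses, which only make $T_2$ injective.

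The assumption enters in two places:
\begin{itemize}
\item In (iii) you apply ``$T_1$ maps closed subspaces onto closed subspaces'' to $T_2^{k}\mathcal W$, and you take components $x_k\in T_2^k\mathcal W$.
\item In the $\supseteq$ half of (v) you need the $k_2$-component of $x$ to lie in $T_1^{k_1}T_2^{k_2}\mathcal W=T_2^{k_2}T_1^{k_1}\mathcal W$ itself, so that you can pull it back through the injective map $T_2^{k_2}$.
\end{itemize}
For (iii), and hence (iv), the repair is cosmetic: work with $\overline{T_2^k\mathcal W}$ and use $T_1\overline{M}=\overline{T_1M}$, which holds because $T_1$ is bounded below. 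In (v) there is no such repair. Without closedness you only get $x_{k_2}\in\bigcap_{k_1}\overline{T_2^{k_2}T_1^{k_1}\mathcal W}$. This can be strictly larger than $\overline{T_2^{k_2}\bigl(\bigcap_{k_1}T_1^{k_1}\mathcal W\bigr)}$, and the pull-back is unavailable.

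In fact (v) is false when $T_2$ is merely injective. Here is a counterexample.
\begin{itemize}
\item Let $\mathcal E=\ell^2(\mathbb N)$ with basis $(\epsilon_n)$, let $\Phi_0=\operatorname{diag}(1/n)$, and let $w=\sum_n n^{-1}\epsilon_n$. Let $S$ be the unilateral shift on $\ell^2(\mathbb N_0)$ with basis $(e_m)$.
\item On $\mathcal K=H^2_{\mathcal E}(\mathbb D)\oplus\ell^2(\mathbb N_0)$ set $A(f\oplus g)=zf\oplus\tfrac12 g$ and $D(f\oplus g)=\Phi_0 f\oplus\bigl(\langle f(\tfrac12),w\rangle e_0+Sg\bigr)$. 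Then $A$ is bounded below, $D$ is injective, and $AD=DA$.
\item On $\mathcal H=H^2(\mathbb D)\otimes\mathcal K$ take $T_1=I\otimes A$, $T_2=M_\zeta\otimes D$ (with $\zeta$ the variable of the first factor), $U=I$ and $\mathcal W=1\otimes\mathcal K$. All hypotheses of the lemma hold.
\item Since $\bigcap_k A^k\mathcal K=0\oplus\ell^2(\mathbb N_0)$ and $D(0\oplus g)=0\oplus Sg$, the left side of (v) is orthogonal to $v=\zeta\otimes(0\oplus e_0)$.
\item Put $u_j=H_j^{-1}\sum_{n\le j}\epsilon_n$, where $H_j=\sum_{n\le j}n^{-1}$, viewed as constant functions. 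One computes $DA^k(2^ku_j\oplus 0)=2^kz^k\Phi_0u_j\oplus e_0\to 0\oplus e_0$ as $j\to\infty$.
\item Hence $0\oplus e_0\in\overline{DA^k\mathcal K}=A^k\overline{D\mathcal K}$, so $v\in T_1^k\overline{T_2\mathcal W}\subseteq T_1^k\bigl(\bigoplus_{k_2}T_2^{k_2}\mathcal W\bigr)$ for every $k$. Thus $v\neq 0$ lies in the right side but is orthogonal to the left side.
\end{itemize}

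So you need to assume that $T_2$ is bounded below, or at least that each $T_2^{k}\mathcal W$ is closed. This holds in every application in the paper, where $T_2$ is a near-isometry. With that assumption stated explicitly, your argument is a complete proof.
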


We now prove a Wold-type decomposition for a doubly twisted near-isometry. 

\begin{theorem}\label{1 th wold decom for dtni by induction}
Let $T=(T_{1},T_{2},...,T_{n})$ be a doubly twisted near-isometry. Then $T$ admits a Wold-type decomposition such that
$\mathcal{H}=\bigoplus_{A\in I_{n}}\mathcal{H}_{A},$ where for a non-empty $A\subseteq I_{n}$, 
$$\mathcal{H}_{A}=\bigoplus_{\bs k\in\mathbb{N}_{0}^{|A|}}T_{A}^{\bs k}(\bigcap_{\bs \ell\in\mathbb{N}_{0}^{n-|A|}} T_{I_{n}\setminus A}^{\bs \ell}\mathcal{W}_{A})$$
and for $A=\phi$,
$$\mathcal{H}_{A}=\bigcap_{\bs k\in\mathbb{N}_{0}^{I_{n}}}T_{I_{n}}^{\bs k}\mathcal{H}.$$
\end{theorem}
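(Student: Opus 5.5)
Our plan is to argue by induction on $n$, carrying along not only the decomposition but the explicit description of the summands. The single-operator Wold-type decomposition (Theorem \ref{1 wold decomposition for a near-isometry}) handles $n=1$: $\cl H=\bigoplus_{k\ge0}T_1^k\cl W_{\{1\}}\oplus\bigcap_k T_1^k\cl H$. For $n=2$, we first split by $T_1$. We then show that $\cl W_{\{1\}}=\ker T_1^*$ reduces $T_2$ (Lemma \ref{1 le properties of dtni on wa}(i)) and that $T_2$ is a near-isometry on it (Lemma \ref{1 le properties of dtni on wa}(iv)). This lets us Wold-decompose $\cl W_{\{1\}}$ with respect to $T_2$, and Lemma \ref{1 le properties of dtni on wa}(ii) identifies the wandering subspace as $\cl W_{\{1,2\}}$. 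Pushing this through $T_1^{k}$ and using Lemma \ref{1 lemma to show wold for dtni by induction}(iii),(iv),(v) gives
\[
\bigoplus_{k_1}T_1^{k_1}\cl W_{\{1\}}=\bigoplus_{k_1,k_2}T_1^{k_1}T_2^{k_2}\cl W_{\{1,2\}}\ \oplus\ \bigoplus_{k_1}T_1^{k_1}\Big(\bigcap_{k_2}T_2^{k_2}\cl W_{\{1\}}\Big),
\]
with $T_1,T_2$ interchanged as needed, using condition (iii) of Definition \ref{1 def of doubly twisted near-isometry} and the $U_{12}$-relations.

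For the invertible part $\bigcap_k T_1^k\cl H$ of $T_1$, we need to show it reduces $T_2$. Invariance follows from Lemma \ref{1 lemma to show wold for dtni by induction}(i),(ii): $T_2T_1^k\cl H=U^{-k}T_1^kT_2\cl H\subseteq T_1^k\cl H$, since $U$ commutes with and preserves these ranges. For $T_2^*$-invariance, we use $T_1^{*}T_2=U^*T_2T_1^*$ together with the near-isometry identity $T^{*n}T^{n+1}\cl H\subseteq T\cl H$. Once it reduces $T_2$, we Wold-decompose it with respect to $T_2$. The shift piece is identified with $\bigoplus_{k_2}T_2^{k_2}\big(\bigcap_{k_1}T_1^{k_1}\cl W_{\{2\}}\big)$ via Lemma \ref{1 lemma to show wold for dtni by induction}(v), now with the roles of $T_1,T_2$ swapped, $\cl W=\ker T_2^*$ being $T_1$-invariant. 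The invertible piece is $\bigcap_{k_1,k_2}T_1^{k_1}T_2^{k_2}\cl H$. This produces the four summands in the stated form.

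For general $n$, we assume the result for $n-1$ applied to $(T_1,\dots,T_{n-1})$, which is again doubly twisted. Each summand $\cl H_A$ with $A\subseteq I_{n-1}$ should reduce $T_n$: the subspaces $\cl W_A$ reduce $T_n$, and the operators $T_A^{\bs k}$ and intersections of ranges intertwine with $T_n$ up to the unitaries $U_{in}$, which preserve all these subspaces by Lemma \ref{1 le properties of dtni on wa}(iii). Restricted to $\cl H_A$, $T_n$ is a near-isometry (Lemma \ref{1 shift on reducing subspace}), so we decompose it. Repeated use of Lemma \ref{1 lemma to show wold for dtni by induction}(iii)--(v), moving $T_n^{k_n}$ and $\bigcap_{\ell}T_n^{\ell}$ past the $T_A^{\bs k}$ and $T_{I_{n-1}\setminus A}^{\bs\ell}$, rewrites the two pieces as $\cl H_{A\cup\{n\}}$ and $\cl H_A$ in the $n$-variable formulas, with $\cl W_A\ominus T_n\cl W_A=\cl W_{A\cup\{n\}}$. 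Finally, since every summand reduces every $T_i$, Lemma \ref{1 shift on reducing subspace} shows that $T_i|_{\cl H_A}$ is a shift (wandering subspace given by the $\bs k$-slice) for $i\in A$ and invertible otherwise. Alternatively, once reduction is established, one can shortcut via Theorem \ref{1 th equivalent condition for wold}(iii).

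The main obstacle is proving that $\bigcap_k T_i^k\cl H$, and more generally the mixed pieces, are invariant under $T_j^*$. Here condition (iii) of the definition and the bounded-below (rather than isometric) nature of $T_i$ enter. We would first try to write $T_j^*T_i^k x=U^{*k}_{ij}T_i^kT_j^*x$ directly from (i) iterated, which settles this cleanly if it holds. The delicate part is then checking that the orthogonality hypothesis (iv) of Lemma \ref{1 lemma to show wold for dtni by induction} holds on each restricted piece.
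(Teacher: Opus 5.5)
Your proposal is correct and follows essentially the paper's inductive proof. You Wold-decompose $\ker T_1^*$ (in general $\cl W_A$) under the next operator via Lemma \ref{1 le properties of dtni on wa} and push the pieces through with Lemma \ref{1 lemma to show wold for dtni by induction}. The only difference is that you first show the pieces already obtained ($\bigcap_k T_1^k\cl H$, later each $\cl H_A$) reduce the next operator and then decompose its restriction, whereas the paper decomposes $\cl H$ under $T_2$, applies $T_1^{k_1}$ and intersects over $k_1$, and in the induction step substitutes the decomposition of $\cl W_A$ directly.

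The step you call the main obstacle is routine. With $U_{ji}=U_{ij}^*$, condition (i) gives $T_j^*T_i^k=U_{ji}^{*k}T_i^kT_j^*$ for all $i\ne j$, exactly as in the proof of Theorem \ref{1th wold decompostion dtni by SOT}. So condition (iii) is needed only for $T_j$-invariance, not $T_j^*$-invariance. Likewise, hypothesis (iv) of Lemma \ref{1 lemma to show wold for dtni by induction} holds automatically for subspaces of $\ker T_2^*$, since $T_2^k(\ker T_2^*)\perp T_2^{k+1}\cl H$.
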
 

\begin{proof}
We established the Wold-type decomposition by induction on $m$ operator. We first prove for the case $m=2$. 
By the Wold-type decomposition for a near-isometry $T_{1}$, we obtain
 \begin{equation}\label{eq1 wold}
\mathcal{H}=\bigoplus_{k_{1}\geq 0}T_{1}^{k_{1}}\mathcal{W}_{1}\bigoplus \bigcap_{k_{1}\geq 0}T_{1}^{k_{1}}\mathcal{H},
\end{equation}
where $\cl W_1=kerT_1^*$. Using Lemma \ref{1 le properties of dtni on wa}, $T_{2}$ is a near-isometry on $\cl W_1$; therefore,   

\begin{equation}\label{wold2}
 \mathcal{W}_{1}=\bigoplus_{k_{2}\geq0}T_{2}^{k_{2}}(\mathcal{W}_{1}\ominus T_{2}\mathcal{W}_{1})\bigoplus\bigcap_{k_{2}\geq0}T_{2}^{k_{2}}\mathcal{W}_{1}.
\end{equation}

Substituting (\ref{wold2}) into (\ref{eq1 wold}), we obtain

\begin{equation*}
\mathcal{H}=\bigoplus_{k_{1}\geq 0}T_{1}^{k_{1}}\left(\bigoplus_{k_{2}\geq 0}T_{2}^{k_{2}}\mathcal{W}_{\{1,2\}}\bigoplus \bigcap_{k_{2}\geq 0}T_{2}^{k_{2}}\mathcal{W}_{1}\right)\bigoplus\bigcap_{k_{1}\geq0}T_{1}^{k_{1}}\mathcal{H}.
\end{equation*}

Then, using Lemma \ref{1 lemma to show wold for dtni by induction}, we obtain

\begin{equation}\label{eq3 wold}
\mathcal{H}=\bigoplus_{k_{1},k_{2}\geq 0}T_{1}^{k_{1}}T_{2}^{k_{2}}\mathcal{W}_{\{1,2\}}\bigoplus\bigoplus_{k_{1}\geq 0}T_{1}^{k_{1}}\left(\bigcap_{k_{2}\geq 0}T_{2}^{k_{2}}\mathcal{W}_{1}\right)\bigoplus \bigcap_{k_{1}\geq 0}T_{1}^{k_{1}}\mathcal{H}.
\end{equation}

Additionally, by Wold-type decomposition for the near-isometry $T_{2}$ , we have
\begin{equation*}
  \mathcal{H}=\bigoplus_{k_{2}\geq0}T_{2}^{k_{2}}\mathcal{W}_{2}\bigoplus \bigcap_{k_{2}\geq 0}T_{2}^{k_{2}}\mathcal{H}.  
\end{equation*}
Then, for $k_{1}\geq 0$, we have
\begin{eqnarray*}
 T_{1}^{k_{1}}\mathcal{H} &=& T_{1}^{k_{1}}\left(\bigoplus_{k_{2}\geq 0}T_{2}^{k_{2}}\mathcal{W}_{2}\bigoplus \bigcap_{k_{2}\geq 0}T_{2}^{k_{2}}\mathcal{H}\right)\\
&=&\bigoplus_{k_{2}\geq 0}T_{1}^{k_{1}}T_{2}^{k_{2}}\mathcal{W}_{2}\bigoplus \bigcap_{k_{2}\geq 0}T_{1}^{k_{1}}T_{2}^{k_{2}}\mathcal{H} 
\quad ({\rm using \ Lemma \ \ref{1 lemma to show wold for dtni by induction} (iv)})\\   
&=& \bigoplus_{k_{2}\geq 0}T_{2}^{k_{2}}T_{1}^{k_{1}}\mathcal{W}_{2}\bigoplus \bigcap_{k_{2}\geq 0}T_{1}^{k_{1}}T_{2}^{k_{2}}\mathcal{H}, 
\end{eqnarray*}
since $T_{1}^{k_{1}}T_{2}^{k_{2}}=U_{12}^{k_{1}k_2}T_{2}^{k_{2}}T_{1}^{k_{1}}$ and $U_{12}\cl W=\cl W$. Now, taking the intersection over all $k_{1}\geq 0$ and using Lemma  \ref{1 lemma to show wold for dtni by induction}, we get 
\begin{equation}\label{eq8 wold}
\bigcap _{k_{1}\geq 0}T_{1}^{k_{1}}\mathcal{H}= \bigoplus_{k_{2}\geq 0}T_{2}^{k_{2}}\left(\bigcap _{k_{1}\geq 0}T_{1}^{k_{1}}\mathcal{W}_{2}\right)\bigoplus \bigcap_{k_{1},k_{2}\geq 0}T_{1}^{k_{1}}T_{2}^{k_{2}}\mathcal{H}.     
\end{equation}

Then, using (\ref{eq8 wold}) in (\ref{eq3 wold}), we have 
\begin{eqnarray*}
\mathcal{H} &=& \bigoplus_{k_{1},k_{2}\geq 0} T_{1}^{k_{1}} T_{2}^{k_{2}} \mathcal{W}_{\{1,2\}} \bigoplus\ \bigoplus_{k_{1}\geq 0} T_{1}^{k_{1}}
\left( \bigcap_{k_2\geq 0} T_{2}^{k_{2}} \mathcal{W}_{1} \right) \\
& & \bigoplus \bigoplus_{k_{2}\geq 0} T_{2}^{k_{2}}\left( \bigcap_{k_{1}\geq 0} T_{1}^{k_{1}} \mathcal{W}_{2} \right)
   \bigoplus \bigcap_{k_{1},k_{2}\geq 0} T_{1}^{k_{1}} T_{2}^{k_{2}} \mathcal{H}.
\end{eqnarray*}

This establishes the Wold-type decomposition for $m=2$ case. 

Suppose $m<n$, and $(T_1,T_2,\dots, T_{m})$ admits Wold-type decomposition. Then 
\begin{equation}\label{eq10 wold}
 \mathcal{H}=\bigoplus_{A\subseteq I_{m}}\,\mathcal{H}_{A}, \ \ \ {\rm where} \  \mathcal{H}_{A}=
 \bigoplus_{\bs k\in \mathbb{N}_{0}^{|A|}}T_{A}^{\bs k}\left(\bigcap_{\bs \ell\in \mathbb{N}_{0}^{m-|A|}}T_{I_{m}\setminus A}^{\bs \ell}\mathcal{W}_{A}\right).
\end{equation}

We need to show that $(T_{1},T_{2},\dots, T_{m},T_{m+1})$ admits Wold-type decomposition. For $A\subseteq I_{m}$, $\mathcal{W}_{A}$ reduces $T_{m+1}$. By Wold-type decomposition on $\mathcal{W}_{A}$, we obtain
\begin{equation*}
\mathcal{W}_{A}=\bigoplus_{k_{m+1}\geq 0}T_{m+1}^{k_{m+1}} \mathcal{W}_{A\cup\{m+1\}}\bigoplus\bigcap_{k_{m+1}\geq 0}T_{m+1}^{k_{m+1}}\mathcal{W}_{A}.
\end{equation*}
Substituting the expression for $\mathcal{W}_{A}$ in (\ref{eq10 wold}), we obtain

\begin{equation*}
\mathcal{H}_{A}=\bigoplus_{\bs k\in \mathbb{N}_{0}^{|A|}}T_{A}^{\bs k}\left(\bigcap_{\bs \ell\in \mathbb{N}_{0}^{m-|A|}}T_{I_{m}\setminus A}^{\bs \ell} \left( \bigoplus_{k_{m+1}\geq 0}T_{m+1}^{k_{m+1}} \mathcal{W}_{A\cup\{m+1\}}\bigoplus\bigcap_{k_{m+1}\geq 0}T_{m+1}^{k_{m+1}}\mathcal{W}_{A}\right)\right).
\end{equation*}

\noindent Using Lemma \ref{1 lemma to show wold for dtni by induction} $(iii)$, we obtain
\begin{eqnarray*}
\mathcal{H}_{A} &=& \bigoplus_{\bs k\in \mathbb{N}_{0}^{|A|}}T_{A}^{\bs k}\left(\bigcap_{\bs \ell\in \mathbb{N}_{0}^{m-|A|}}T_{I_{m}\setminus A}^{\bs \ell}\left( \bigoplus_{k_{m+1}\geq 0}T_{m+1}^{k_{m+1}} \mathcal{W}_{A\cup\{m+1\}}\right)\right. \\
&& \hspace{2 cm}  \left. \bigoplus \bigcap_{\substack{\bs \ell\in \mathbb{N}_{0}^{m-|A|},\\ k_{m+1}\geq 0}} T_{I_{m}\setminus A}^{\bs \ell}T_{m+1}^{k_{m+1}}\mathcal{W}_{A}\right).  
\end{eqnarray*}

Hence, using Lemma \ref{1 lemma to show wold for dtni by induction}, we have 
\begin{eqnarray*}
\cl{H}_{A}
 = && 
\bigoplus_{\substack{\bs k\in\mathbb{N}_{0}^{|A|} \\ k_{m+1}\ge 0}}
T_{A}^{\bs k}\, T_{m+1}^{k_{m+1}}
\left(
    \bigcap_{\bs \ell\in \mathbb{N}_{0}^{\,m-|A|}}
    T_{I_{m}\setminus A}^{\bs \ell}
    \mathcal{W}_{A\cup\{m+1\}}
\right)\\
&& \bigoplus \bigoplus_{\bs k\in \mathbb{N}_{0}^{|A|}}
T_{A}^{\bs k}
\left(
    \bigcap_{\substack{\bs \ell\in \mathbb{N}_{0}^{\,m-|A|} \\ k_{m+1}\ge 0}}
    T_{I_{m}\setminus A}^{\bs \ell}
    T_{m+1}^{k_{m+1}}
    \mathcal{W}_{A}
\right),
\end{eqnarray*}
since $T_{m+1}^{k_{m+1}}T_{I_{m}\setminus A}^{\bs \ell}\mathcal{W}_{A\cup\{m+1\}}=T_{I_{m}\setminus A}^{\bs \ell}T_{m+1}^{k_{m+1}}\mathcal{W}_{A\cup\{m+1\}}$ 
and $U_{ij}(\mathcal{W}_{A\cup\{m+1\}})=\mathcal{W}_{A\cup\{m+1\}}$ for all $1\leq i,j\leq m$.

This proves $\mathcal{H}=\bigoplus_{B\subseteq I_{m}\cup\{m+1\}}\mathcal{H}_{B}$. This claim the induction hypothesis, and hence doubly twisted near-isometry $T$ admit Wold-type decomposition.

We now show for $\mathcal{H}_A$ reduces $T_{j}$ for all $A\subseteq I_n$ and $j\in I_{n}$.
Suppose $q_j\in I_n\setminus A$ and $x\in \cl H_A$. Then for all $\bs \ell\in \mathbb{N}_{0}^{n-|A|}$, there exists $x_{\bs k, \bs\ell}\in \mathcal{W}_{A}$ such that 
$$
x=\sum_{\bs k\in \mathbb{N}_{0}^{|A|}}T_{A}^{\bs k}T_{I_{n}\setminus A}^{\bs \ell}x_{\bs k,\bs \ell}.
$$ 
Then $T_{q_j}x=\sum\limits_{\bs k\in \mathbb{N}_{0}^{|A|}}T_{A}^{\bs k}T_{I_{n}\setminus A}^{\bs \ell}P(U)T_{q_j}x_{\bs k, \bs \ell}$, where 
$$P(U)=U_{q_{j}a_{1}}^{k_{1}}\cdots U_{q_{j}a_{m}}^{k_{m}}U_{q_jq_1}^{\ell_1}\cdots U_{q_{j}q_{j-1}}^{\ell_{j-1}}U_{q_jq_{j+1}}^{l_{j+1}}\cdots U_{q_{j}q_{n-m}}^{\ell_{n-m}}.$$ 
Since $\mathcal{W}_{A}$ reduces $T_{q_j}$ and $U_{rs}$ for all $1\leq r\ne s\leq n$, we have $T_{q_j}\mathcal{H}_{A}\subseteq \mathcal{H}_{A}$. Thus, $\cl H_A$ is invariant under $T_j$ for all $j\in I_n\setminus A$. Similarly, $\cl H_A$ is invariant under every $T_i, \ i\in A.$ So, $\mathcal{H}_{A}$ reduces $T_{j}$ for all $A\subseteq I_{n}$ and $j\in I_{n}$.  

Finally, by definition, $\mathcal{H}_{A}\subseteq \mathcal{H}_{T_{i},S}$ for every $i\in A$, and  
\begin{equation*}
\mathcal{H}_{A}\subseteq \bigoplus_{\bs k\in\mathbb{N}_{0}^{|A|}}T_{A}^{\bs k}\left(\bigcap_{\bs \ell\in\mathbb{N}_{0}^{n-|A|}} T_{I_{n}\setminus A}^{\bs \ell}\mathcal{H}\right) \subseteq 
\bigoplus_{\bs k\in\mathbb{N}_{0}^{|A|}}T_{A}^{\bs k}\left(\bigcap_{m\ge 0}T_{j}^{m}\mathcal{H}\right)\subseteq \mathcal{H}_{T_j,I},   
\end{equation*}
for every $j\in I_n\setminus A$. Hence, $T_i$ is a shift on $\cl H_A$ whenever $i\in A$ and an invertible operator whenever $i\in I_n\setminus A.$
\end{proof}

\begin{remark} Theorem \ref{1 th wold decom for dtni by induction} shows that a Wold-type decomposition exists for a doubly twisted near-isometry and provides a representation of each summand. Moreover, as noted in Remark \ref{1 remark unique rep of Wold for n-tuple of near-isometry}, such a decomposition, if it exists for a tuple of near-isometries, must be unique, and the precise form of the summands is also identified there. Hence, the two representations of the summands must coincide; that is,  
$$
\mathcal{H}_{A}= \bigoplus_{\bs k\in \mathbb{N}_{0}^{|A|}}T_{A}^{\bs k}\left(\bigcap_{\bs\ell\in \mathbb{N}_{0}^{m-|A|}}T_{I_{m}\setminus A}^{\bs \ell}\mathcal{W}_{A}\right)
 =\left(\bigcap_{i\in I_{n}\setminus A}\mathcal{H}_{T_{i},I}\right)\bigcap \left(\bigcap_{k\in A}\mathcal{H}_{T_{k},S}\right).
$$
\end{remark}

\begin{remark}\label{cor-isometry}
Since every isometry is, in particular, a near-isometry, the Wold decomposition for a doubly twisted isometry arises as a special case of the Wold-type decomposition established here for a doubly twisted near-isometry. In this way, our result extends the corresponding decomposition obtained in \cite{jaydebmansirakshit2022}. Furthermore, when $U_{ij}=I$ for all $1\leq i\neq j\leq n$, our theorem specializes to the Wold-type decomposition for doubly commuting near-isometries obtained by the first and third authors of the present paper with Pokhriyal in \cite{lata2022multivariable}.
\end{remark}

\section{An alternative proof of a wold-type decomposition for doubly twisted near-isometries}\label{alternate}
In this section, we shall give another poof for the existence of a Wold-type decomposition for a doubly twisted near-isometry. Unlike the proof presented in the last section that was based on induction, here the proof relies on arguments involving orthogonal projections. We shall refer to orthogonal projections simply as projections.
 
\begin{definition}
 A net $\{T_{\alpha}\}_{\alpha}$ converges in the strong operator topology (SOT) to $T$ if $T_{\alpha} x \to Tx$ for all $x\in\mathcal{H}.$ In this case, we write  $T_{\alpha}\xrightarrow{SOT} T$ or $T=SOT-\lim\limits_{\alpha}T_\alpha$. 
\end{definition}

Before proceeding further, we recall the following well-known fact about increasing nets of self-adjoint operators. 

\begin{lemma} \label{1 le SOT of increasing projection}
Let $\{A_\alpha\}_{\alpha\in \Lambda}$ be an increasing net of self-adjoint operators on a Hilbert space $\cl H$, bounded above by a self-adjoint operator on 
$\cl H$. Then $\{A_\alpha\}_{\alpha\in \Lambda}$ converges in SOT to its least upper bound. Moreover, if $A_\alpha$ is a projections, then the SOT limit is the 
projection onto the closure $\bigcup_{\alpha}Ran(A_\alpha)$.
\end{lemma}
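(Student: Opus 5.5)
The statement is the standard monotone convergence fact for self-adjoint operators, and I will prove it through the associated quadratic forms. Let $B$ be a self-adjoint upper bound, so $A_\alpha\le B$ for all $\alpha$. Fix any index $\alpha_0$. For $\alpha\ge\alpha_0$ we have $A_{\alpha_0}\le A_\alpha\le B$, so for each $x$ the net $\langle A_\alpha x,x\rangle$ is increasing and bounded above by $\langle Bx,x\rangle$. Hence it converges to some real number $q(x)$.

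Polarization then shows that $\langle A_\alpha x,y\rangle$ converges for all $x,y$. Moreover, $\|A_\alpha\|$ is eventually bounded by $\max(\|A_{\alpha_0}\|,\|B\|)$, since $A_{\alpha_0}\le A_\alpha\le B$. So the limit defines a bounded sesquilinear form, and by the Riesz representation theorem there is a bounded self-adjoint operator $A$ with $\langle Ax,y\rangle=\lim_\alpha\langle A_\alpha x,y\rangle$. By construction $A_\alpha\le A$ for every $\alpha$. If $C\ge A_\alpha$ for all $\alpha$, then passing to the limit in the quadratic forms gives $C\ge A$. So $A$ is the least upper bound.

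The key step, and the only genuine obstacle, is upgrading weak convergence to SOT convergence. For $\alpha\ge\alpha_0$ put $D_\alpha=A-A_\alpha\ge 0$; these operators are uniformly bounded, say $\|D_\alpha\|\le M$. For a positive operator, $\|D x\|^2\le\|D\|\,\langle Dx,x\rangle$. This follows from Cauchy–Schwarz applied to the semi-inner product $\langle D\cdot,\cdot\rangle$, or equivalently $\|D^{1/2}D^{1/2}x\|^2\le\|D\|\,\|D^{1/2}x\|^2$. Therefore $\|(A-A_\alpha)x\|^2\le M\langle (A-A_\alpha)x,x\rangle\to0$, which gives $A_\alpha\xrightarrow{SOT}A$.

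For projections, the net is bounded above by $I$, so the argument above applies. Let $\cl M=\overline{\bigcup_\alpha Ran(A_\alpha)}$. Since $A_\alpha\le A_\beta$ for projections means $Ran(A_\alpha)\subseteq Ran(A_\beta)$, the union is an increasing union of subspaces and $\cl M$ is a closed subspace. The SOT limit of projections is idempotent and self-adjoint, because $A_\alpha^2=A_\alpha$ and the net is uniformly bounded, so products converge strongly. Thus $A$ is a projection.

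It remains to identify its range as $\cl M$. If $x\in Ran(A_\beta)$, then $A_\alpha x=x$ for all $\alpha\ge\beta$, so $Ax=x$; by closedness, $\cl M\subseteq Ran(A)$. Conversely, if $x\perp\cl M$, then $A_\alpha x=0$ for all $\alpha$, so $Ax=0$. Hence $Ran(A)=\cl M$, and $A=P_{\cl M}$.
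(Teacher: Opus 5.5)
Your proof is correct and complete. It is the standard proof of Vigier's monotone convergence theorem: you obtain the weak limit from the quadratic forms via polarization, get the least-upper-bound property by passing to the limit, and upgrade to strong convergence through $\|Dx\|^2\le\|D\|\langle Dx,x\rangle$ for $D\ge 0$. You then identify the limit of an increasing net of projections in the usual way.

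The paper does not prove this lemma at all; it only recalls it as a well-known fact, so there is no competing argument to compare with.

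One small redundancy in your write-up: the idempotence check is not needed. Once you have $Ax=x$ on $\mathcal M$ (using that $\ker(A-I)$ is closed) and $Ax=0$ on $\mathcal M^{\perp}$, it already follows that $A=P_{\mathcal M}$.
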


The above lemma will be used in the following result to identify the projections onto the shift and the invertible summands $\mathcal{H}_{T,S}$ and $\mathcal{H}_{T,I}$ appearing in the Wold-type decomposition of a near-isometry.

\begin{lemma}\label{1 le projection onto shift and invertible part}
Let $T\in \mathcal{B(H)}$ be a near-isometry with the Wold-type decomposition
$\mathcal{H}=\mathcal{H}_{T,S}\bigoplus \mathcal{H}_{T,I}.$ Then
\begin{enumerate}
\renewcommand{\labelenumi}{(\roman{enumi})}
 \item $T^{n}(T^{n})^{\#}-T^{n+1}(T^{n+1})^{\#}$ is the projection onto $T^{n}(ker T^{*})$.
\item $P_{\mathcal{H}_{T,S}}=SOT-\!\lim\limits_{n \to \infty} \sum_{k=0}^{n} T^{k}(T^{k})^{\#}-T^{k+1}(T^{k+1})^{\#}.$
\item $P_{\mathcal{H}_{T,I}}=SOT-\!\lim\limits_{n \to \infty}  T^{n}(T^{n})^{\#}.$
\end{enumerate}
\end{lemma}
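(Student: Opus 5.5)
The plan is to first show that $T^{n}(T^{n})^{\#}$ is the orthogonal projection onto $T^n\mathcal{H}$ for every $n\ge 0$, and then deduce (i)--(iii) from the Wold-type decomposition of Theorem \ref{1 wold decomposition for a near-isometry} together with Lemma \ref{1 le SOT of increasing projection}.

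\textbf{Step 1: $T^n(T^n)^{\#}$ is the projection onto $T^n\mathcal{H}$.} Since $T$ is bounded below, so is $T^n$. Hence $(T^n)^*T^n$ is invertible and $T^n\mathcal{H}$ is closed. Put $Q_n=T^n(T^n)^{\#}=T^n((T^n)^*T^n)^{-1}(T^n)^*$. Direct computation shows $Q_n^2=Q_n$ and $Q_n^*=Q_n$. Its range lies in $T^n\mathcal{H}$, and $Q_nT^n=T^n$, so $Q_n$ is the orthogonal projection onto $T^n\mathcal{H}$. In particular $Q_0=I$.

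\textbf{Step 2: part (i).} Because $T^{n+1}\mathcal{H}\subseteq T^n\mathcal{H}$, the operator $Q_n-Q_{n+1}$ is the projection onto $T^n\mathcal{H}\ominus T^{n+1}\mathcal{H}$. It remains to show this space equals $T^n(\ker T^*)$. Write $\mathcal{H}=\ker T^*\oplus T\mathcal{H}$ (valid since $T\mathcal{H}$ is closed). Applying $T^n$ gives $T^n\mathcal{H}=T^n(\ker T^*)+T^{n+1}\mathcal{H}$. Condition (ii) in the definition of a near-isometry, in its equivalent form $T^n(\ker T^*)\perp T^{n+1}\mathcal{H}$, makes this sum orthogonal. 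Moreover $T^n(\ker T^*)$ is closed, because $T^n$ is bounded below. Hence $T^n\mathcal{H}\ominus T^{n+1}\mathcal{H}=T^n(\ker T^*)$. The key point of the whole lemma is exactly this use of the near-isometry condition; for a general bounded-below operator the difference would only be the projection onto a complement, not onto $T^n(\ker T^*)$.

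\textbf{Step 3: parts (ii) and (iii).} The partial sums telescope:
$$\sum_{k=0}^n (Q_k-Q_{k+1})=I-Q_{n+1}.$$
By (i) this is the projection onto $\bigoplus_{k=0}^{n}T^k(\ker T^*)$, the spaces being mutually orthogonal by Theorem \ref{1 wold decomposition for a near-isometry}. These projections form an increasing sequence bounded by $I$. By Lemma \ref{1 le SOT of increasing projection} they converge in SOT to the projection onto the closure of $\bigcup_n \bigoplus_{k\le n}T^k(\ker T^*)$, which is $\mathcal{H}_{T,S}$. This proves (ii).

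For (iii), $Q_n$ is a decreasing sequence of projections, so $I-Q_n$ is increasing. By (ii), $Q_n=I-(I-Q_n)$ converges in SOT to $I-P_{\mathcal{H}_{T,S}}$. Since $\mathcal{H}=\mathcal{H}_{T,S}\oplus\mathcal{H}_{T,I}$, this limit equals $P_{\mathcal{H}_{T,I}}$. Equivalently, a decreasing sequence of projections converges to the projection onto $\bigcap_n T^n\mathcal{H}$, which is $\mathcal{H}_{T,I}$.
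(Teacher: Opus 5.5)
Your proposal is correct and follows essentially the same route as the paper. You identify $T^n(T^n)^{\#}$ as the projection onto $T^n\mathcal{H}$, use $T^n\mathcal{H}\ominus T^{n+1}\mathcal{H}=T^n(\ker T^*)$ for (i), take SOT limits of the increasing partial sums for (ii), and get (iii) from the identity $\sum_{k=0}^{n}\bigl(T^k(T^k)^{\#}-T^{k+1}(T^{k+1})^{\#}\bigr)=I-T^{n+1}(T^{n+1})^{\#}$. You also supply two justifications the paper leaves as ``straightforward'': the verification that $Q_n$ is a projection, and the explicit use of the near-isometry condition $T^n(\ker T^*)\perp T^{n+1}\mathcal{H}$ to identify the difference space.
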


\begin{proof} 
For $n\geq 0$, $T^n\in B(\cl H)$ is bounded below and $(T^n)^{\#}=\left(T^{n*}T^n\right)^{-1}T^{n*}$ is a left inverse of $T^n$. With straightforward arguments, one can easily show that $T^{n}(T^{n})^{\#}$  is the projection onto $T^{n}(\mathcal{H})$.    

Consequently, for each fixed $n$ and each $0\le k\le n$, we have $T^{k}(T^{k})^{\#} -T^{k+1}(T^{k+1})^{\#}$ is the projection onto 
$T^k(\cl H)\ominus T^{k+1}(\cl H)=T^k(kerT^*)$. This proves $(i)$.  

Next, using $(i)$, we have that    
$$
A_{n}:=\sum_{k=0}^{n}T^{k}(T^{k})^{\#}-T^{k+1}(T^{k+1})^{\#}
$$
is the orthogonal projection onto $\bigoplus_{k=0}^{n}T^{k}(ker T^{*}).$ Furthermore, $\{A_n\}$ is an increasing sequence; therefore, 
SOT-$\lim\limits_{n\to \infty}A_n$ exits and is the projection onto 
$$
\overline{\bigcup\limits_{n=0}^\infty\left(\bigoplus_{k=0}^n T^{k}(ker T^{*})\right)}=\bigoplus_{k\ge 0} T^{k}(ker T^{*})=\cl H_{T,S}.
$$ 

Therefore, $\cl H_{T,S}=SOT-\!\lim\limits_{n \to \infty} \sum_{k=0}^{n} \left(T^{k}(T^{k})^{\#}-T^{k+1}(T^{k+1})^{\#}\right),$ which proves $(ii)$.  

Lastly, since $T$ is a near-isometry on $\mathcal{H}$, we can decompose $\cl H$ as  
$$
\mathcal{H}=\bigoplus\limits_{k=0}^nT^k(ker T^{*})\bigoplus T^{n+1}\mathcal{H},
$$ 
for each $n\ge 0$. From this, it is evident that $T^{n+1}(T^{n+1})^{\#}$, the projection onto $T^{n+1}\cl H$, equals $I-A_n$, the projection onto 
$\cl H\ominus \bigoplus\limits_{k=0}^nT^k (ker T^{*})$. Hence,
$$SOT-\!\lim\limits_{n \to \infty} T^{n}(T^{n})^{\#}=I-P_{\cl H_{T,S}}=\cl H_{T,I}.$$ This completes the proof.  
\end{proof}

\begin{lemma}\label{1 le power and hatsh of T_{i} and T_{j}}
Let $T=(T_{1},T_{2},\dots ,T_{n})$ be a doubly twisted near-isometry on $\mathcal{H}$. Then for any $k, \ell\in \mathbb{N}_{0}$ and $1\le i\ne j\le n$, we have 
$$T_{i}^{k}(T_{i}^{k})^{\#}(T_{j}^{\ell})(T_{j}^{\ell})^{\#}=(T_{j}^{\ell})(T_{j}^{\ell})^{\#}T_{i}^{k}(T_{i}^{k})^{\#}.$$
\end{lemma}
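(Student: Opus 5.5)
The plan is to show that the range $T_i^k\cl H$ reduces $T_j$. Then the projection onto it commutes with every operator in the unital $C^*$-algebra generated by $T_j$, and in particular with $T_j^{\ell}(T_j^{\ell})^{\#}$. Throughout, we use the convention $U_{ji}=U_{ij}^*$ from the remark following Definition \ref{1 def of doubly twisted near-isometry}. With it, conditions (i)--(iii) hold for all ordered pairs $i\ne j$, so no case split on $i<j$ versus $i>j$ is needed.

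\emph{Step 1 (identifying the projections).} As observed in the proof of Lemma \ref{1 le projection onto shift and invertible part}, $T_i^k$ is bounded below. Hence $T_i^k\cl H$ is closed, and $P:=T_i^k(T_i^k)^{\#}$ is the orthogonal projection onto $T_i^k\cl H$. Likewise $Q:=T_j^{\ell}(T_j^{\ell})^{\#}$ is the projection onto $T_j^{\ell}\cl H$. For $k=0$ or $\ell=0$ the claim is trivial, since then one of the two projections is $I$.

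\emph{Step 2 (invariance under $T_j$ and $T_j^*$).} By condition (iii), $T_jT_i=U_{ji}T_iT_j$. Since the unitaries commute with $T_i$, induction gives $T_jT_i^k=U_{ji}^kT_i^kT_j=T_i^kU_{ji}^kT_j$. Therefore $T_jT_i^k\cl H\subseteq T_i^k\cl H$.

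Next, taking adjoints in condition (i) for the pair $(i,j)$, namely $T_i^*T_j=U_{ij}^*T_jT_i^*$, gives $T_j^*T_i=T_iT_j^*U_{ij}$. By Lemma \ref{ 1 le consequence of definiton of doubly twisted near-isometry}(i), $U_{ij}$ commutes with $T_j^*$ and with $T_i$, so $T_j^*T_i=U_{ij}T_iT_j^*$. Iterating, $T_j^*T_i^k=U_{ij}^kT_i^kT_j^*=T_i^kU_{ij}^kT_j^*$, hence $T_j^*T_i^k\cl H\subseteq T_i^k\cl H$.

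I expect this adjoint step to be the main obstacle. It is exactly where the ``doubly'' condition (i) is needed, and one has to move the unitary factors past $T_i^k$ using the commutation relations of Lemma \ref{ 1 le consequence of definiton of doubly twisted near-isometry}.

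\emph{Step 3 (conclusion).} Step 2 shows that $T_i^k\cl H$ reduces $T_j$, so $PT_j=T_jP$ and $PT_j^*=T_j^*P$. Consequently $P$ commutes with $T_j^{\ell}$, with $T_j^{\ell*}$, and with the positive invertible operator $T_j^{\ell*}T_j^{\ell}$.

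An operator commuting with an invertible operator also commutes with its inverse: from $PX=XP$ we get $X^{-1}P=PX^{-1}$. So $P$ commutes with $(T_j^{\ell*}T_j^{\ell})^{-1}$, and therefore with
\[
Q=T_j^{\ell}\,(T_j^{\ell*}T_j^{\ell})^{-1}\,T_j^{\ell*}.
\]
This is the asserted identity $PQ=QP$.
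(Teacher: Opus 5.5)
Your proof is correct, but it takes a different route from the paper's.

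\textbf{The paper's route.} The paper works purely algebraically with the left inverses. It first shows that $T_i^{*k}T_i^k$ commutes with $T_j^{\ell}$. It then derives the twisted intertwining identities $(T_i^k)^{\#}T_j^{\ell}=U_{ij}^{*k\ell}T_j^{\ell}(T_i^k)^{\#}$, $T_i^k(T_j^{\ell})^{\#}=U_{ij}^{*k\ell}(T_j^{\ell})^{\#}T_i^k$ and $(T_i^k)^{\#}(T_j^{\ell})^{\#}=U_{ij}^{k\ell}(T_j^{\ell})^{\#}(T_i^k)^{\#}$. Finally it moves the four factors past one another and checks that the unitary factors cancel.

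\textbf{Your route.} You never manipulate $(T_i^k)^{\#}$ algebraically. Instead you absorb the unitaries at the level of ranges, via $T_jT_i^k\mathcal{H}=T_i^kU_{ji}^kT_j\mathcal{H}\subseteq T_i^k\mathcal{H}$ and its analogue for $T_j^*$. This removes the cancellation bookkeeping. It also gives a stronger intermediate fact: $T_i^k\mathcal{H}$ reduces $T_j$, so $T_i^k(T_i^k)^{\#}$ commutes with $T_j$ and $T_j^*$ themselves.

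\textbf{What the stronger fact buys.} Intersecting over $k$ shows at once that $\mathcal{H}_{T_i,I}=\bigcap_k T_i^k\mathcal{H}$ reduces $T_j$. The paper in effect redoes your Step~2 computation inside that intersection in the proof of Theorem~\ref{1th wold decompostion dtni by SOT}. Applied with the roles swapped, your fact also yields Lemma~\ref{1 le tj and hatsh commutes with tak} immediately: the projection onto $T_j^{\ell}\mathcal{H}$ commutes with every $T_a$, $a\neq j$, hence with $T_A^{\boldsymbol{k}}$.

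\textbf{What the paper's computation buys.} It produces explicit intertwining relations for the left inverses, which the paper reuses. For example, $(T_1^{k_1})^{\#}T_2^{k_2}=T_2^{k_2}(T_1^{k_1})^{\#}U_{12}^{*k_1k_2}$ appears in the proof of the lemma on $\bigcap_i T_i^{k_i}(kerT_i^*)$.

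A minor citation point: the commutation of $U_{ij}$ with $T_i$ comes from condition (ii) of Definition~\ref{1 def of doubly twisted near-isometry}, not from part (i) of the lemma you cite.
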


\begin{proof} Observe that $T_{i}^{*k}T_{i}^{k}T_{j}^{\ell}=U_{ij}^{k\ell}T_{i}^{*k}T_{j}^{\ell}T_{i}^{k}=U_{ij}^{k\ell}U_{ij}^{*k\ell}T_{j}^{\ell}T_{i}^{*k}T_{i}^{k}=T_{j}^{\ell}T_{i}^{*k}T_{i}^{k}.$ Thus, 
\begin{equation}\label{eq1 hatsh}
   T_{j}^{\ell}(T_{i}^{*k}T_{i}^{k})^{-1}=(T_{i}^{*k}T_{i}^{k})^{-1}T_{j}^{\ell}.
\end{equation}

Similarly,
\begin{equation}\label{eq2 hatsh}
T_{i}^{k}(T_{j}^{*\ell}T_j^{\ell})^{-1}=(T_{j}^{*\ell}T_{j}^{\ell})^{-1}T_{i}^k.  
\end{equation}

Then, 
$(T_i^k)^{\#}T_j^{\ell} =   (T_{i}^{*k}T_{i}^{k})^{-1}T_i^{*k}T_j^{\ell}= U_{ij}^{*k\ell}   (T_{i}^{*k}T_{i}^{k})^{-1}T_j^{\ell}T_i^{*k}=U_{ij}^{*k\ell} T_j^{\ell} (T_i^k)^{\#},$ 
using (\ref{eq1 hatsh}).
Thus, 
\begin{equation}\label{hatsh-1}
(T_{i}^{k})^{\#}T_{j}^{\ell}=U_{ij}^{*k\ell} T_{j}^{\ell}(T_{i}^{k})^{\#}.
\end{equation}

Similarly, 
\begin{equation}\label{hatsh-2}
(T_{i}^{k})(T_{j}^{\ell})^{\#}=U_{ij}^{*k\ell} (T_{j}^{\ell})^{\#}T_{i}^{k}.
\end{equation}
Then,  
\begin{eqnarray}\label{hatsh-3}
(T_{i}^{k})^{\#}(T_{j}^{\ell})^{\#}&=&(T_{i}^{*k}T_{i}^{k})^{-1}T_{i}^{*k}(T_{j}^{*\ell}T_{j}^{\ell})^{-1}T_{j}^{*\ell}\nonumber\\
&=&(T_{i}^{*k}T_{i}^{k})^{-1}(T_{j}^{*\ell}T_{j}^{\ell})^{-1}T_{i}^{*k}T_{j}^{*\ell}\nonumber\\
&=& U_{ij}^{k\ell} (T_{i}^{*k}T_{i}^{k})^{-1}(T_{j}^{*\ell}T_{j}^{\ell})^{-1}T_{j}^{*\ell}T_{i}^{*k}\nonumber\\
&=& U_{ij}^{k\ell} (T_{j}^{*\ell}T_{j}^{\ell})^{-1}T_{j}^{*\ell}(T_{i}^{*k}T_{i}^{k})^{-1}T_{i}^{*k}\nonumber\\
&=& U_{ij}^{k\ell}(T_{j}^{\ell})^{\#} (T_{i}^{k})^{\#}.
\end{eqnarray}

Finally, 
\begin{eqnarray*}
T_{i}^{k}(T_{i}^{k})^{\#}T_{j}^{\ell}(T_{j}^{\ell})^{\#} &=& U_{ij}^{*kl}T_{i}^{k}T_{j}^{\ell}(T_{i}^{k})^{\#}(T_{j}^{\ell})^{\#} \ \ {\rm using \ \ (\ref{hatsh-1}})\\
&=& T_j^{\ell} T_{i}^{k}(T_i^k)^{\#}(T_{j}^{\ell})^{\#}\\
&= & U_{ij}^{k \ell} T_j^{\ell}T_{i}^{k}(T_{j}^{\ell})^{\#}(T_{i}^{k})^{\#} \ \ {\rm using \ \ (\ref{hatsh-3}})\\
&=& T_{j}^{\ell}(T_{j}^{\ell})^{\#}T_i^k(T_i^k)^{\#} \ \ {\rm using \ \ (\ref{hatsh-2}}).\\
\end{eqnarray*}
This completes the proof. 
\end{proof}

\begin{prop}\label{1 prop projection commutes in dtni}
Let $T=(T_{1},T_{2},...,T_{n})$ be an n-tuple of doubly twisted near-isometry. Then for $1\le i\ne j\le n$, we have
\begin{enumerate}
\renewcommand{\labelenumi}{(\roman{enumi})}
\item $T_{i}^{k}(T_{i}^{k})^{\#}P_{\mathcal{H}_{T_{j},S}}=P_{\mathcal{H}_{T_{j},S}}T_{i}^{k}(T_{i}^{k})^{\#}$ for all $k\ge 0$; 
\item $P_{\mathcal{H}_{T_{i},S}}P_{\mathcal{H}_{T_{j},S}}=P_{\mathcal{H}_{T_{j},S}}P_{\mathcal{H}_{T_{i},S}};$
\item $P_{\mathcal{H}_{T_{i},S}}P_{\mathcal{H}_{T_{i},I}}=P_{\mathcal{H}_{T_{i},I}}P_{\mathcal{H}_{T_{i},S}}=0$.
\end{enumerate}
\end{prop}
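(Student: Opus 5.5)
The plan is to reduce everything to Lemma \ref{1 le power and hatsh of T_{i} and T_{j}} together with the SOT descriptions of the projections in Lemma \ref{1 le projection onto shift and invertible part}.

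For $(i)$, fix $k\ge 0$ and write $Q_k:=T_i^k(T_i^k)^{\#}$. By Lemma \ref{1 le power and hatsh of T_{i} and T_{j}}, $Q_k$ commutes with $T_j^{m}(T_j^{m})^{\#}$ for every $m\ge 0$. Hence $Q_k$ commutes with every partial sum
$$
A_N=\sum_{m=0}^{N}\left(T_j^{m}(T_j^{m})^{\#}-T_j^{m+1}(T_j^{m+1})^{\#}\right).
$$
By Lemma \ref{1 le projection onto shift and invertible part}$(ii)$, $A_N\to P_{\mathcal H_{T_j,S}}$ in SOT. We pass to the limit using that left multiplication by a fixed bounded operator preserves SOT convergence: for $x\in\mathcal H$ we have $Q_kA_Nx\to Q_kP_{\mathcal H_{T_j,S}}x$. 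On the other side, $A_N(Q_kx)\to P_{\mathcal H_{T_j,S}}Q_kx$ because SOT convergence is applied at the vector $Q_kx$. Since $Q_kA_N=A_NQ_k$ for every $N$, the two limits agree, which gives $(i)$. The case $k=0$ is trivial, since $Q_0=I$.

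For $(ii)$, we apply the same limiting argument in the variable $k$. By $(i)$, $P:=P_{\mathcal H_{T_j,S}}$ commutes with $T_i^k(T_i^k)^{\#}$ for every $k$, and therefore with each partial sum $\sum_{k=0}^{N}\left(T_i^k(T_i^k)^{\#}-T_i^{k+1}(T_i^{k+1})^{\#}\right)$. Taking the SOT limit via Lemma \ref{1 le projection onto shift and invertible part}$(ii)$, now applied to $T_i$, and using the same two-sided argument as above, we get $P_{\mathcal H_{T_i,S}}P=PP_{\mathcal H_{T_i,S}}$.

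Part $(iii)$ does not need the twisting at all. By Theorem \ref{1 wold decomposition for a near-isometry}, $\mathcal H_{T_i,S}\perp\mathcal H_{T_i,I}$ and $\mathcal H=\mathcal H_{T_i,S}\oplus\mathcal H_{T_i,I}$, so $P_{\mathcal H_{T_i,I}}=I-P_{\mathcal H_{T_i,S}}$. Both products therefore equal $P_{\mathcal H_{T_i,S}}-P_{\mathcal H_{T_i,S}}^2=0$.

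The only delicate point is the justification of the limit interchange in $(i)$ and $(ii)$. Multiplication is not jointly SOT continuous in general, but here one factor is fixed, so it suffices to use boundedness of that fixed operator on one side and pointwise convergence at a fixed vector on the other. All the genuine algebra has already been done in Lemma \ref{1 le power and hatsh of T_{i} and T_{j}}.
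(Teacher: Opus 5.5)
Your proof is correct and is essentially the paper's argument. The commutation of $T_i^k(T_i^k)^{\#}$ with each $T_j^m(T_j^m)^{\#}$ passes to the partial sums and then, via the SOT formula for $P_{\mathcal{H}_{T_j,S}}$, to the projection itself; (ii) repeats this in the other index, and (iii) follows from $P_{\mathcal{H}_{T_i,I}}=I-P_{\mathcal{H}_{T_i,S}}$. Your explicit justification of the one-sided limit interchange (a fixed bounded factor on one side, pointwise convergence at the vector $Q_kx$ on the other) supplies a step the paper leaves implicit.
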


\begin{proof} Using Lemma (\ref{1 le power and hatsh of T_{i} and T_{j}}), we have 
$$T_{i}^{k}(T_{i}^{k})^{\#} T_{j}^{\ell}(T_{j}^{\ell})^{\#}=T_{j}^{\ell}(T_{j}^{\ell})^{\#}T_{i}^{k}(T_{i}^{k})^{\#}
$$
for every $\ell\ge 0.$ Thus, $T_{i}^{k}(T_{i}^{k})^{\#}$ commutes with $\sum\limits_{l=0}^{n}\left(T_{j}^{\ell}(T_{j}^{\ell})^{\#}-T_{j}^{\ell+1}(T_{j}^{\ell+1})^{\#}\right)$ for every $n$. Therefore, $T_i^k(T_i^k)^{\#}$ commutes with $P_{\mathcal{H}_{T_{j},S}}$, since  
$$
P_{\mathcal{H}_{T_{j},S}}=SOT-\!\lim\limits_{n \to \infty} \sum_{l=0}^{n}T_{j}^{\ell}(T_{j}^{\ell})^{\#}-T_{j}^{\ell+1}(T_{j}^{\ell+1})^{\#},
$$
as proved in Lemma \ref{1 le projection onto shift and invertible part}. This settles $(i)$. The proof of $(ii)$ follow from $(i)$ and Lemma \ref{1 le projection onto shift and invertible part}, and $(iii)$ follows immediately from the fact that $P_{\mathcal{H}_{T_{i},S}}$ and $P_{\mathcal{H}_{T_{i},I}}$ are orthogonal to each other.
\end{proof}

\begin{lemma}\label{1 le intersection of T_{i}W_{i}}
Let $T=(T_{1},\dots,T_{n})$ be an n-tuple of doubly twisted near-isometry. Then for ${\bs k}=(k_{1}, k_{2},\dots,k_{n})\in \mathbb{N}^{n}_{0}$, we have
$$
\bigcap_{1\leq i\leq n}T_{i}^{k_{i}}(ker T_{i}^{*})=T_{1}^{k_{1}}\dots T_{n}^{k_{n}}\left(\bigcap_{1\leq i\leq n}ker T_{i}^{*}\right).
$$
\end{lemma}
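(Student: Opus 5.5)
We prove the two inclusions separately. Throughout we use the twisted relations from Definition \ref{1 def of doubly twisted near-isometry} together with the remark that (i)--(iii) hold for all $i\ne j$ once we set $U_{ji}=U_{ij}^*$. We also use Lemma \ref{1 le properties of dtni on wa}: for $j\notin A$, $\cl W_A$ reduces $T_j$, and every $U_{rs}$ maps $\cl W_A$ onto itself.

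\emph{The inclusion $\supseteq$.} Let $x\in \cl W_{I_n}$ and fix $i$. Using $T_rT_s=U_{rs}T_sT_r$ repeatedly, we move $T_i^{k_i}$ to the far left and write
$$T_1^{k_1}\cdots T_n^{k_n}x=T_i^{k_i}\,V\,\prod_{r\ne i}T_r^{k_r}x,$$
where $V$ is a product of powers of the $U_{rs}$. Since each $T_r$ commutes with each $U_{rs}$, the factor $V$ can be moved to act directly on $x$. Now $\cl W_{I_n}$ is $U$-invariant, so $Vx\in\cl W_{I_n}$. By Lemma \ref{1 le properties of dtni on wa}(i), $\ker T_i^*$ reduces each $T_r$ with $r\ne i$. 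Hence $\prod_{r\ne i}T_r^{k_r}Vx\in \ker T_i^*$, and the element lies in $T_i^{k_i}(\ker T_i^*)$.

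\emph{The inclusion $\subseteq$.} We argue by induction on the number of indices, proving the stronger statement
$$\bigcap_{i\in A}T_i^{k_i}(\ker T_i^*)=T_A^{\bs k}(\cl W_A)$$
for every $A\subseteq I_n$. The key single-step claim is the following: if $\cl M$ is a closed subspace reducing $T_j$ and invariant under the $U$'s, then
$$T_j^{k}(\ker T_j^*)\cap \cl M = T_j^k(\ker T_j^*\cap \cl M).$$
To prove it, take $y=T_j^k w$ with $w\in\ker T_j^*$ and $y\in\cl M$. Since $T_j$ is bounded below, $w=(T_j^k)^{\#}y$. Because $\cl M$ reduces $T_j$, it is invariant under $(T_j^*T_j)^{-1}$ and $T_j^{*k}$, so $w\in\cl M$.

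We then apply this with $j=$ the last index of $A$ and $\cl M=T_{A'}^{\bs k'}(\cl W_{A'})$, where $A'=A\setminus\{j\}$. We need $\cl M$ to reduce $T_j$. Invariance under $T_j$ follows from the twisted commutation together with $\cl W_{A'}$ reducing $T_j$. For invariance under $T_j^*$, we use $T_j^*T_r=U_{jr}^*T_rT_j^*$ for $r\in A'$. Closedness of $\cl M$ holds because $T_{A'}^{\bs k'}$ is bounded below.

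\emph{Combining.} This gives
$$\bigcap_{i\in A}T_i^{k_i}(\ker T_i^*)=T_j^{k_j}\bigl(\ker T_j^*\cap T_{A'}^{\bs k'}\cl W_{A'}\bigr).$$
It remains to identify $\ker T_j^*\cap T_{A'}^{\bs k'}\cl W_{A'}$ with $T_{A'}^{\bs k'}(\cl W_{A})$. If $T_{A'}^{\bs k'}w$ is killed by $T_j^*$, then commuting $T_j^*$ past $T_{A'}^{\bs k'}$ (up to unitaries) and using injectivity shows $T_j^*w=0$. Finally we reorder $T_j^{k_j}T_{A'}^{\bs k'}$ into $T_A^{\bs k}$, absorbing the resulting unitaries into $\cl W_A$.

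The main obstacle is checking that $T_{A'}^{\bs k'}(\cl W_{A'})$ reduces $T_j$, in particular that it is invariant under $T_j^*$. This is exactly where condition (i) of the definition, $T_i^*T_j=U_{ij}^*T_jT_i^*$, is needed alongside (iii) and the $U$-invariance of the $\cl W$'s.
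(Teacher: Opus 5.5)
Your proof is correct. It follows the same induction as the paper but packages the inductive step differently.

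\textbf{The paper's step.} It writes $x=T_1^{k_1}\cdots T_{m-1}^{k_{m-1}}y_1=T_m^{k_m}y_2$ and pulls back in \emph{both} directions. Applying the left inverses $(T_i^{k_i})^{\#}$, $i<m$, and using that $\ker T_m^*$ reduces $T_1,\dots,T_{m-1}$, gives $x\in T_1^{k_1}\cdots T_m^{k_m}(\ker T_m^*)$. Applying $(T_m^{k_m})^{\#}$, and using that $\bigcap_{i<m}\ker T_i^*$ reduces $T_m$, gives $x\in T_1^{k_1}\cdots T_m^{k_m}\bigl(\bigcap_{i<m}\ker T_i^*\bigr)$. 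Injectivity of $T_1^{k_1}\cdots T_m^{k_m}$ then places $x$ in the image of the intersection.

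\textbf{Your step.} Your key claim is an abstract version of only the second pull-back: your $w=(T_j^{k_j})^{\#}y$ is the paper's $y_2$. You replace the first pull-back and the injectivity step by the identity $\ker T_j^*\cap T_{A'}^{\bs k'}\cl W_{A'}=T_{A'}^{\bs k'}\cl W_A$, which comes from $T_j^*T_{A'}^{\bs k'}=T_{A'}^{\bs k'}V'T_j^*$. The cost is that you must check that the image $T_{A'}^{\bs k'}\cl W_{A'}$, not merely $\cl W_{A'}$, reduces $T_j$. That is where condition (i) of the definition enters, and you handle it correctly. The gain is a one-directional chain of equalities, which makes your separate proof of $\supseteq$ redundant.

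\textbf{Two small corrections.} First, in the key claim you need $\cl M$ to be invariant under $(T_j^{*k}T_j^{k})^{-1}$, not $(T_j^*T_j)^{-1}$. This holds because $P_{\cl M}$ commutes with $T_j$, hence with $T_j^{*k}T_j^k$ and its inverse. Second, the hypothesis that $\cl M$ be $U$-invariant plays no role in the key claim itself. Invariance of the $\cl W$'s under the $U_{rs}$ is used only when you check that $T_{A'}^{\bs k'}\cl W_{A'}$ reduces $T_j$, and in the final reordering into $T_A^{\bs k}$.
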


\begin{proof} The containment 
$$
T_{1}^{k_{1}}\dots T_{n}^{k_{n}}\left(\bigcap\limits_{1\leq i\leq n}ker T_{i}^{*}\right)\subseteq \bigcap\limits_{1\leq i\leq n}T_{i}^{k_{i}}(ker T_{i}^{*})
$$ 
follows simply by using $T_i^{k_i}T_j^{k_j}=U_{ij}^{k_ik_j}T_j^{k_j}T_i^{k_i}$ and $kerT_i^*$ is invariant under $T_j$ whenever $i\ne j.$
We shall prove the reverse containment using induction. We first prove the containment for $n=2$, that is,  
\begin{equation}\label{eq1 int of dtni}
T_{1}^{k_{1}}(ker T_{1}^{*})\bigcap T_{2}^{k_{2}}(ker T_{2}^{*})\subseteq T_{1}^{k_{1}}T_{2}^{k_{2}}(ker T_{1}^{*}\bigcap ker T_{2}^{*}).
\end{equation}
Suppose $x\in T_{1}^{k_{1}}(ker T_{1}^{*})\bigcap T_{2}^{k_{2}}(ker T_{2}^{*}) $. Then there exist $x_{1}\in ker T_{1}^{*}$ and $x_{2}\in ker T_{2}^{*}$ such that 

$$
x=T_{1}^{k_{1}}x_{1}=T_{2}^{k_{2}}x_{2}.
$$
This implies $x_{1}=(T_{1}^{k_{1}})^{\#}T_{2}^{k_{2}}x_{2}=T_2^{k_2}(T_{1}^{k_{1}})^{\#}U_{12}^{*k_1k_2}x_2$. But, $kerT_2^*$ reduces $U_{12}$ and $T_1$. Therefore, $x_1\in T_2^{k_2}(kerT_2^*)$, which shows that $x\in T_1^{k_1}T_2^{k_2}(kerT_2^*)$.  

Similarly, $x\in T_{1}^{k_{1}}T_{2}^{k_{2}}(ker T_{1}^{*}).$ As $T_{1}^{k_{1}}T_{2}^{k_{2}}$ is one-one, we deduce that $x\in T_{1}^{k_{1}}T_{2}^{k_{2}}(ker T_{1}^{*}\cap ker T_{2}^{*})$, which establishes (\ref{eq1 int of dtni}).

Suppose induction hypothesis holds for $m-1$ , that is,
\begin{equation}\label{eq3 int of dtni}
\bigcap_{1\leq i\leq m-1}T_{i}^{k_{i}}(ker T_{i}^{*})\subseteq T_{1}^{k_{1}}\dots T_{m-1}^{k_{m-1}}\left(\bigcap_{1\leq i\leq m-1}ker T_{i}^{*}\right).
\end{equation}
We need to show 
$$
\bigcap_{1\leq i\leq m}T_{i}^{k_{i}}(ker T_{i}^{*})\subseteq T_{1}^{k_{1}}\dots T_{m}^{k_{m}}\left(\bigcap_{1\leq i\leq m}ker T_{i}^{*}\right).
$$  

Let $x\in \bigcap\limits_{1\leq i\leq m}T_{i}^{k_{i}}(ker T_{i}^{*})$. Then 
$x\in \bigcap\limits_{1\leq i\leq m-1}T_{i}^{k_{i}}(ker T_{i}^{*})\bigcap T_{m}^{k_{m}}ker T_{m}^{*},$ which, using (\ref{eq3 int of dtni}), implies that there exist 
$y_{1}\in\bigcap\limits_{1\le i\le m-1}kerT_i^*$ and $y_{2}\in kerT_m^*$ such that 
\begin{equation}\label{eq5 int of dtni}
x=T_{1}^{k_{1}}\dots T_{m-1}^{k_{m-1}}y_{1}=T_{m}^{k_{m}}y_{2}.
\end{equation}

Then, we can write 
$$
y_{1}=(T_{m-1}^{k_{m-1}})^{\#}\dots(T_{1}^{k_{1}})^{\#}T_{m}^{k_{m}}y_{2}=T_m^{k_m}(T_{m-1}^{k_{m-1}})^{\#}\dots(T_{1}^{k_{1}})^{\#}Vy_2,
$$
where $V=U_{(m-1)m}^{*k_{m-1}k_m}\cdots U_{1m}^{*k_{1}k_m}.$ Since, $kerT_m^*$ reduces $T_1, \dots, T_{m-1}$ and $U_{1m}, \dots, U_{(m-1)m}$, therefore $y_1\in T_m^{k_m}(ker T_m^*)$. This implies that $x\in T_1^{k_1}\dots T_m^{k_m}(kerT_m^*)$.  

On the other hand, $T_{1}^{k_{1}}\cdots T_{m-1}^{k_{m-1}}y_{1}=T_{m}^{k_{m}}y_{2}$ implies that 
$$
y_2=(T_m^{k_m})^{\#}T_{1}^{k_{1}}\cdots T_{m-1}^{k_{m-1}}y_{1}=T_{1}^{k_{1}}\dots T_{m-1}^{k_{m-1}}(T_m^{k_m})^{\#}Ry_1,
$$
where $R=U_{m1}^{*k_mk_1}\cdots U_{m(m-1)}^{*k_mk_{m-1}}$. Again, since $\bigcap\limits_{1\leq i\leq m-1}ker T_{i}^{*}$ reduces $T_m$ and $U_{m(m-1)}, \dots, U_{m1}$, we conclude that $y_2\in T_1^{k_1}\cdots T_{m-1}^{k_{m-1}}\left(\bigcap\limits_{1\le i\le m-1}kerT_i^*\right)$. Thus, using (\ref{eq5 int of dtni}) and the facts that $T_mT_i=U_{mi}T_iT_m$ 
and $kerT_m^*$ reduces $U_{mi}$ for all $1\le i\le m-1$, we obtain that $x\in T_1^{k_1}\cdots T_m^{k_m} \left(\bigcap\limits_{1\le i\le m-1}kerT_i^*\right).$  

Lastly, as $T_{1}^{k_{1}}\dots T_{m-1}^{k_{m-1}}T_{m}^{k_{m}}$ is injective, we obtain
\begin{equation*}
x\in  T_{1}^{k_{1}}\dots T_{m-1}^{k_{m-1}}T_{m}^{k_{m}}\left(\bigcap_{1\leq i\leq m-1}ker T_{i}^{*}\bigcap ker T_{m}^{*}\right). 
\end{equation*}
Hence, by induction, the reverse containment holds true for a general $n$. This completes the proof.
\end{proof}

\begin{lemma}\label{1 le tj and hatsh commutes with tak}
Let $T=(T_{1},\dots, T_n)$ be doubly twisted near-isometry. For $A\subseteq I_{n}$ and for $j\in I_{n}\setminus A$, we have $T_{j}^{\ell_{j}}(T_{j}^{\ell_{j}})^{\#}T_{A}^{\bs k}= T_{A}^{\bs k}T_{j}^{\ell_{j}}(T_{j}^{\ell_{j}})^{\#}$ for all $\bs k\in \bb N_0^{|A|}$ and $\ell_j\ge 0$.   
\end{lemma}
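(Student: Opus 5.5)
The plan is to reduce the statement to the single-operator case $T_A^{\bs k}=T_i^{k}$ with $i\in A$, $i\neq j$. Since $T_A^{\bs k}=T_{a_1}^{k_1}\cdots T_{a_m}^{k_m}$ and $j\notin A$, commuting the projection $P_\ell:=T_j^{\ell}(T_j^{\ell})^{\#}$ past each factor $T_{a_r}^{k_r}$ in turn gives the result. So it suffices to show $P_\ell T_i^{k}=T_i^{k}P_\ell$ for all $k,\ell\ge 0$ and $i\neq j$.

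For the single-operator case, I will use the identities obtained in the proof of Lemma \ref{1 le power and hatsh of T_{i} and T_{j}}, all of which hold for any ordered pair $i\ne j$ via the convention $U_{ji}=U_{ij}^*$. First, Lemma \ref{1 lemma to show wold for dtni by induction}(i), or directly condition (iii) of Definition \ref{1 def of doubly twisted near-isometry}, gives
\[
T_j^{\ell}T_i^{k}=U_{ji}^{k\ell}T_i^{k}T_j^{\ell}.
\]
Second, identity (\ref{hatsh-1}) with the roles of $i$ and $j$ swapped gives
\[
(T_j^{\ell})^{\#}T_i^{k}=U_{ji}^{*k\ell}T_i^{k}(T_j^{\ell})^{\#}.
\]
We derive it exactly as in that proof: from $T_j^{*\ell}T_j^{\ell}T_i^k=T_i^kT_j^{*\ell}T_j^{\ell}$ we get that $(T_j^{*\ell}T_j^\ell)^{-1}$ commutes with $T_i^k$, and we combine this with the twisted relation $T_j^{*\ell}T_i^k=U_{ji}^{*k\ell}T_i^kT_j^{*\ell}$. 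That relation comes from iterating condition (i), $T_j^*T_i=U_{ji}^*T_iT_j^*$, using condition (ii) and Lemma \ref{ 1 le consequence of definiton of doubly twisted near-isometry}(i) to move the unitaries through.

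Combining the two identities, and using that $U_{ji}$ commutes with $T_i$, $T_j$ and $(T_j^{\ell})^{\#}$ by Lemma \ref{ 1 le consequence of definiton of doubly twisted near-isometry}(ii), we compute
\[
T_j^{\ell}(T_j^{\ell})^{\#}T_i^{k}=U_{ji}^{*k\ell}T_j^{\ell}T_i^{k}(T_j^{\ell})^{\#}=U_{ji}^{*k\ell}U_{ji}^{k\ell}T_i^{k}T_j^{\ell}(T_j^{\ell})^{\#}=T_i^kT_j^\ell(T_j^\ell)^{\#}.
\]
The unitary phases cancel, which gives the single-operator case.

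The only delicate point is bookkeeping the direction of the twist when $i>j$ versus $i<j$. The remark following Definition \ref{1 def of doubly twisted near-isometry}, that conditions (i)–(iii) hold for all $i\ne j$ with $U_{ji}=U_{ij}^*$, removes this issue, so I expect no real obstacle. The cases $k=0$ or $\ell=0$ are trivial, since then $T^0(T^0)^{\#}=I$.
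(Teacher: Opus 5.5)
Your argument is correct and is essentially the paper's proof. The paper handles the whole product $T_A^{\bs k}$ at once, carrying the unitary factor $U_{ja_1}^{\ell_j k_1}\cdots U_{ja_m}^{\ell_j k_m}$ through the same three steps you use: $(T_j^{*\ell_j}T_j^{\ell_j})^{-1}$ commutes with $T_A^{\bs k}$, then the twisted relation for $(T_j^{\ell_j})^{\#}$, then the phases cancel. Your factor-by-factor reduction is only a cosmetic repackaging of this, and the one small imprecision is that $U_{ji}$ commuting with $(T_j^{*\ell}T_j^{\ell})^{-1}$ and $(T_j^{\ell})^{\#}$ is not literally the paper's identity $T_k^{\#}U_{ij}=U_{ij}T_k^{\#}$ (which concerns $T_k^{\#}$ only), though it follows by the same argument applied to $T_j^{\ell}$.
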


\begin{proof} Let $|A|=m$. Then, for $\bs k=(k_1, \dots, k_m)\in \bb N_0^{m}$ and $\ell_j\ge 0$, we have 
\begin{equation}\label{1 le tj and hatsh commutes with tak a}
T_{j}^{\ell_{j}}T_{A}^{\bs k}=T_{j}^{\ell_{j}}T_{a_{1}}^{k_{1}}\cdots T_{a_{m}}^{k_{m}}=T_{A}^{\bs k}T_{j}^{\ell_{j}}U_{ja_{1}}^{\ell_{j}k_{1}}\cdots U_{ja_{m}}^{\ell_{j}k_{m}}.    
\end{equation}
Similarly 
\begin{equation}\label{1 le tj and hatsh commutes with tak b}
T_{j}^{*\ell_{j}}T_{A}^{\bs k}=T_{A}^{\bs k}T_{j}^{*\ell_{j}}U_{ja_{1}}^{*\ell_{j}k_{1}}\cdots U_{ja_{m}}^{*\ell_{j}k_{m}}.
\end{equation}
Since $\{U_{ij}\}_{1\leq i\neq j\leq n}$ are commuting families of unitaries, then using (\ref{1 le tj and hatsh commutes with tak a}) and (\ref{1 le tj and hatsh commutes with tak b}), we obtain $$
T_{j}^{*\ell_{j}}T_{j}^{\ell_{j}}T_{A}^{\bs k}=T_{A}^{\bs k}T_{j}^{*\ell_{j}}T_{j}^{\ell_{j}}.
$$ 
Hence 
\begin{equation}\label{1 le tj and hatsh commutes with tak c}
(T_{j}^{*\ell_{j}}T_{j}^{\ell_{j}})^{-1}T_{A}^{\bs k}=T_{A}^{\bs k}(T_{j}^{*\ell_{j}}T_{j}^{\ell_{j}})^{-1}.
\end{equation}
From (\ref{1 le tj and hatsh commutes with tak b}) and (\ref{1 le tj and hatsh commutes with tak c}), we obtain
\begin{equation}\label{1 le tj and hatsh commutes with tak d}
(T_{j}^{\ell_{j}})^{\#}T_{A}^{\bs k}=(T_{j}^{*\ell_j}T_{j}^{\ell_{j}})^{-1}T_{j}^{*\ell_{j}}T_{A}^{\bs k}=T_{A}^{\bs k}(T_{j}^{\ell_{j}})^{\#}U_{ja_{1}}^{*\ell_{j}k_{1}}\dots U_{ja_{m}}^{*\ell_{j}k_{m}}.    
\end{equation}
From (\ref{1 le tj and hatsh commutes with tak d}), we obtain
\begin{equation*}
T_{j}^{\ell_{j}}(T_{j}^{\ell_{j}})^{\#}T_{A}^{\bs k}= T_{j}^{\ell_{j}}T_{A}^{\bs k}(T_{j}^{\ell_{j}})^{\#}U_{ja_{1}}^{*\ell_{j}k_{1}}\dots U_{ja_{m}}^{*\ell_{j}k_{m}}.\end{equation*}

Then using (\ref{1 le tj and hatsh commutes with tak a}), we obtain
\begin{equation*}
T_{j}^{\ell_{j}}(T_{j}^{\ell_{j}})^{\#}T_{A}^{\bs k}= T_{A}^{\bs k}T_{j}^{\ell_{j}}(T_{j}^{\ell_{j}})^{\#}.    
\end{equation*}
\end{proof}

We are now in a position to present an alternative proof of the existence of a Wold-type decomposition for a doubly twisted near-isometry. Unlike the proof for Theorem \ref{1 th wold decom for dtni by induction}, which proceeds by induction, the arguments given below rely on the machinery developed in this section.
Although the statement remains unchanged, we restate it here for reader's convenience. 

\begin{theorem}\label{1th wold decompostion dtni by SOT}
Let $T=(T_{1},T_{2},\dots,T_{n})$ be a doubly twisted near-isometry. Then $T$ admits a unique Wold-type decomposition. Moreover, 
$\mathcal{H}=\bigoplus_{A\in I_{n}}\mathcal{H}_{A},$ where 
$$
\mathcal{H}_{A}=\bigoplus_{\bs k\in\mathbb{N}_{0}^{|A|}}T_{A}^{\bs k}(\bigcap_{\bs \ell\in\mathbb{N}_{0}^{n-|A|}} T_{I_{n}\setminus A}^{\bs \ell}\mathcal{W}_{A})
$$
with $\mathcal{W}_{A}=\bigcap\limits_{i\in A}kerT^{*},$ and $\cl H_{\emptyset}=\bigcap_{\bs \ell\in\mathbb{N}_{0}^{n}} T_{I_{n}}^{\bs \ell}\mathcal{\cl H}$.
\end{theorem}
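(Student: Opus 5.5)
The plan is to work entirely with projections. Set $P_i=P_{\mathcal{H}_{T_i,S}}$ and $Q_i=P_{\mathcal{H}_{T_i,I}}=I-P_i$.

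By Proposition \ref{1 prop projection commutes in dtni}, all the $P_i$ commute, and hence so do all the $P_i,Q_j$. For $A\subseteq I_n$ define
$$P_A=\prod_{i\in A}P_i\prod_{j\in I_n\setminus A}Q_j .$$
These are $2^n$ commuting mutually orthogonal projections. Expanding $\prod_{i=1}^n(P_i+Q_i)=I$ gives $\sum_A P_A=I$. The range of $P_A$ is exactly
$$\Big(\bigcap_{j\notin A}\mathcal{H}_{T_j,I}\Big)\cap\Big(\bigcap_{i\in A}\mathcal{H}_{T_i,S}\Big).$$
So $\mathcal{H}=\bigoplus_A \operatorname{Ran}P_A$.

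To get a Wold-type decomposition it then suffices, by Theorem \ref{1 th equivalent condition for wold}, to show that each $\mathcal{H}_{T_i,S}$ reduces every $T_k$. Equivalently, $P_i$ commutes with $T_k$ and $T_k^*$ for $k\neq i$. For $T_k$ this follows from Lemma \ref{1 le tj and hatsh commutes with tak} (with $A=\{k\}$, $j=i$): each $T_i^{\ell}(T_i^{\ell})^{\#}$ commutes with $T_k$. Passing to the SOT limit in Lemma \ref{1 le projection onto shift and invertible part}(ii) shows that $P_i$ commutes with $T_k$. Since $P_i$ is self-adjoint, taking adjoints gives $P_iT_k^*=T_k^*P_i$. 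Trivially $\mathcal{H}_{T_i,S}$ reduces $T_i$ itself. Thus condition (ii) of Theorem \ref{1 th equivalent condition for wold} holds, existence follows, and uniqueness is Remark \ref{1 remark unique rep of Wold for n-tuple of near-isometry}.

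It remains to identify $\operatorname{Ran}P_A$ with the displayed formula
$$\bigoplus_{\bs k}T_A^{\bs k}\Big(\bigcap_{\bs\ell}T^{\bs\ell}_{I_n\setminus A}\mathcal{W}_A\Big).$$
First take $A=I_n$. The projections $E_i^{k}=T_i^k(T_i^k)^{\#}-T_i^{k+1}(T_i^{k+1})^{\#}$ onto $T_i^k(\ker T_i^*)$ commute across different $i$ by Lemma \ref{1 le power and hatsh of T_{i} and T_{j}}. Hence $\prod_i E_i^{k_i}$ is the projection onto $\bigcap_i T_i^{k_i}(\ker T_i^*)$, which by Lemma \ref{1 le intersection of T_{i}W_{i}} equals $T^{\bs k}\mathcal{W}_{I_n}$. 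Summing over $\bs k$ and taking SOT limits gives $\prod_i P_i$ as the projection onto $\bigoplus_{\bs k}T^{\bs k}\mathcal{W}_{I_n}$.

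For general $A$, apply the same reasoning to the operators $T_A$ restricted to $\bigcap_{j\notin A}\mathcal{H}_{T_j,I}$, which reduces all $T_k$. On that subspace, $\prod_{j\notin A}T_j^{\ell_j}(T_j^{\ell_j})^{\#}$ decreases in SOT to $\prod_{j\notin A} Q_j$, using Lemma \ref{1 le projection onto shift and invertible part}(iii) and commutativity. Combining with Lemma \ref{1 le tj and hatsh commutes with tak}, which says these projections commute with $T_A^{\bs k}$, one pushes the intersection inside the $T_A^{\bs k}$ and gets
$$\operatorname{Ran}P_A=\bigoplus_{\bs k}T_A^{\bs k}\Big(\bigcap_{\bs\ell}T^{\bs\ell}_{I_n\setminus A}\mathcal{H}\cap\mathcal{W}_A\Big).$$

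The main obstacle is the final equality
$$\bigcap_{\bs\ell}T^{\bs\ell}_{I_n\setminus A}\mathcal{H}\cap\mathcal{W}_A=\bigcap_{\bs\ell}T^{\bs\ell}_{I_n\setminus A}\mathcal{W}_A .$$
For this I would use that $\mathcal{W}_A$ reduces each $T_j$ with $j\notin A$ (Lemma \ref{1 le properties of dtni on wa}). Given $x=T^{\bs\ell}y\in\mathcal{W}_A$, apply the projection $P_{\mathcal{W}_A}$, which commutes with $T^{\bs\ell}$. This gives $x=T^{\bs\ell}P_{\mathcal{W}_A}y$, with injectivity ensuring consistency. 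Alternatively, one could simply invoke the uniqueness remark after Theorem \ref{1 th wold decom for dtni by induction}.
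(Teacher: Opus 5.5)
Your proof is correct. Your identification of the summands follows the paper's argument closely:
\begin{itemize}
\item the projections $T_i^{k}(T_i^{k})^{\#}-T_i^{k+1}(T_i^{k+1})^{\#}$ commute;
\item the intersection lemma then gives $\prod_{i\in A}P_{\mathcal{H}_{T_i,S}}\mathcal{H}=\bigoplus_{\bs k}T_A^{\bs k}\mathcal{W}_A$;
\item $\prod_{j\notin A}P_{\mathcal{H}_{T_j,I}}$ is pushed through $T_A^{\bs k}$ by Lemma \ref{1 le tj and hatsh commutes with tak}.
\end{itemize}
Your explicit step $x=T^{\bs\ell}y\in\mathcal{W}_A\Rightarrow x=T^{\bs\ell}P_{\mathcal{W}_A}y$ proves the final equality, which the paper only asserts.

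The real difference is in the existence step. The paper shows directly that $\mathcal{H}_{T_i,I}=\bigcap_n T_i^n\mathcal{H}$ is invariant under $T_j$ and $T_j^*$, using the range identities $T_jT_i^n=U_{ji}^nT_i^nT_j$ and $T_j^*T_i^n=U_{ji}^{*n}T_i^nT_j^*$ with no limits involved. It then invokes (iii)$\Rightarrow$(i) of Theorem \ref{1 th equivalent condition for wold}. You instead derive $P_{\mathcal{H}_{T_i,S}}T_k=T_kP_{\mathcal{H}_{T_i,S}}$ from Lemma \ref{1 le tj and hatsh commutes with tak} and the SOT formula of Lemma \ref{1 le projection onto shift and invertible part}. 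Self-adjointness then gives the $T_k^*$ relation for free. This keeps everything inside the projection machinery of the section. Combined with your identity $\sum_A P_A=\prod_i(P_i+Q_i)=I$, it also makes the inductive Theorem \ref{1 th equivalent condition for wold} unnecessary: each $\operatorname{Ran}P_A$ reduces every $T_k$, and Lemma \ref{1 shift on reducing subspace} finishes. The paper's existence step is more elementary; yours is more uniform.

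One small tidy-up: your general-$A$ paragraph blends two mechanisms, namely restricting to $\mathcal{K}=\bigcap_{j\notin A}\mathcal{H}_{T_j,I}$ and commuting $\prod_{j\notin A}Q_j$ past $T_A^{\bs k}$. Either works. If you restrict, note that every $U_{rs}$ reduces $\mathcal{K}$, since $U_{rs}T_j^m\mathcal{H}=T_j^m\mathcal{H}$. The restricted tuple is then again doubly twisted, and the intersection lemma applies to it. Also, the equality $\operatorname{Ran}\prod_{j\notin A}Q_j=\bigcap_{\bs\ell}T^{\bs\ell}_{I_n\setminus A}\mathcal{H}$ needs that $\prod_{j\notin A}T_j^{\ell_j}(T_j^{\ell_j})^{\#}$ has range $T^{\bs\ell}_{I_n\setminus A}\mathcal{H}$. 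This again follows from Lemma \ref{1 le tj and hatsh commutes with tak}.
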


\begin{proof} In view of Theorem \ref{1 th equivalent condition for wold}, the existence of Wold-type decomposition for $T$ follows once we show $\mathcal{H}_{{T}_{i},I}$ reduces $T_{j}$  for all $i,j\in I_{n}$. Recall that $\mathcal{H}_{T_{i},I}=\bigcap_{n\geq 0}T_{i}^{n}\mathcal{H}.$ Fix $i, j\in I_n$ with $i\ne j$. Then, 
$$
T_{j}(\bigcap_{n\geq 0} T_{i}^{n}\mathcal{H})=\bigcap_{n\geq 0}T_{j}T_{i}^{n}\mathcal{H}=\bigcap_{n\geq 0}U_{ji}^{n}T_{i}^{n}T_{j}\mathcal{H}=\bigcap_{n\geq 0}T_{i}^{n}T_{j}U_{ji}^{n}\mathcal{H}\subseteq\bigcap_{n\geq 0}T_{i}^{n}\mathcal{H},    
$$
and  
$$
{T_{j}}^{*}(\bigcap_{n\geq 0} T_{i}^{n}\mathcal{H})\subseteq\bigcap_{n\geq 0}{T_{j}}^{*}T_{i}^{n}\mathcal{H}=\bigcap_{n\geq 0}U_{ji}^{*n}T_{i}^{n}{T_{j}}^{*}\mathcal{H}=\bigcap_{n\geq 0}T_{i}^{n}{T_{j}}^{*}U_{ji}^{*n}\mathcal{H}\subseteq\bigcap_{n\geq 0}T_{i}^{n}\mathcal{H}.    
$$
This implies that $\cl H_{T_i,I}$ reduces $T_j$. Additionally, $\mathcal{H}_{T_{i},I}$ reduces $T_{i}.$ Thus, $\cl H_{{T}_{i},I}$ reduces $T_{j}$  for all $i,j\in I_{n}$; hence, 
$T$ admits Wold-type decomposition. The uniqueness of the decomposition follows from Remark \ref{1 remark unique rep of Wold for n-tuple of near-isometry}. 

Furthermore, Remark \ref{1 remark unique rep of Wold for n-tuple of near-isometry} described the summands in $\cl H=\bigoplus_{A\subseteq I_n} \cl H_A$ as 
$$
\mathcal{H}_{A}=\left(\bigcap_{i\in I_{n}\setminus A}\mathcal{H}_{T_{i},I}\right)\bigcap \left(\bigcap_{k\in A}\mathcal{H}_{T_{k},S}\right).
$$
The rest of the proof is about deriving the desired form for each $\cl H_A$. Using Proposition \ref{1 prop projection commutes in dtni}, we know that $\{P_{\mathcal{H}_{T_{i},S}}, P_{\mathcal{H}_{T_{j},I}}\}_{1\leq i,j\leq n}$ is a commuting family of projections on $\cl H$. Thus, we can rewrite $\cl H$ as  
\begin{equation} \label{eq1 wold sot}
\mathcal{H}_{A}=\left(\prod_{j\in I_{n}\setminus A}P_{\mathcal{H}_{T_{j},I}}\right)\left(\prod_{i\in A}P_{\mathcal{H}_{T_{i},S}}\right)\mathcal{H}.
\end{equation}

First, we analyze $\prod\limits_{i\in A}P_{\mathcal{H}_{T_{i},S}}\mathcal{H}$. Using Lemma \ref{1 le projection onto shift and invertible part},  
\begin{eqnarray*}
\prod_{i\in A}P_{\mathcal{H}_{T_{i},S}} &=& \prod_{i\in A}\left(SOT-\lim\limits_{r \to \infty} \sum_{k_{i}= 0}^{r}T_{i}^{k_{i}}(T_{i}^{k_{i}})^{\#}-T_{i}^{k_{i+1}}(T_{i}^{k_{i+1}})^{\#}\right)\\
&=& SOT-\lim\limits_{r \to \infty} \prod_{i\in A}\left(\sum_{k_{i}= 0}^{r}T_{i}^{k_{i}}(T_{i}^{k_{i}})^{\#}-T_{i}^{k_{i+1}}(T_{i}^{k_{i+1}})^{\#}\right)\\
&=& SOT-\lim\limits_{r \to \infty} \sum_{0\leq k_{1},\dots,k_{m}\leq r}\prod_{i\in A}\left(T_{i}^{k_{i}}(T_{i}^{k_{i}})^{\#}- T_{i}^{k_{i}+1}(T_{i}^{k_{i}+1})^{\#}\right), 
\end{eqnarray*}
where $m=|A|$. For notational convenience, let us define 
$$
P_r:= \sum_{0\leq k_{1},\dots,k_{m}\leq r}\prod_{i\in A}\left(T_{i}^{k_{i}}(T_{i}^{k_{i}})^{\#}- T_{i}^{k_{i}+1}(T_{i}^{k_{i}+1})^{\#}\right), 
$$
for $r\ge 0$. Then 
$$
\prod_{i\in A}P_{\mathcal{H}_{T_{i},S}}=SOT-\lim\limits_{r \to \infty} P_r.
$$

We shall show that $\{P_r\}_r$ is an increasing sequence of projections. Observe that, using Lemma \ref{1 le projection onto shift and invertible part}, 
$T_{i}^{k_{i}}(T_{i}^{k_{i}})^{\#}-T_{i}^{k_{i+1}}(T_{i}^{k_{i+1}})^{\#}$ is a projection onto 
$T_i^{k_i}(kerT_i^*)$, and using Lemma \ref{1 le power and hatsh of T_{i} and T_{j}} , $T_{i}^{k_{i}}(T_{i}^{k_{i}})^{\#}-T_{i}^{k_{i+1}}(T_{i}^{k_{i+1}})^{\#}$ commutes 
with $T_{j}^{k_{j}}(T_{j}^{k_{j}})^{\#}-T_{j}^{k_{j+1}}(T_{j}^{k_{j+1}})^{\#}$, whenever $i\ne j$. Thus, for each $(k_1, \dots, k_m)\in \bb N_0^m,$  
$$
\prod_{i\in A}\left(T_{i}^{k_{i}}(T_{i}^{k_{i}})^{\#}-T_{i}^{k_{i+1}}(T_{i}^{k_{i+1}})^{\#} \right)
$$
is a projection and 
\begin{eqnarray*}
\prod\limits_{i\in A}\left(T_{i}^{k_{i}}(T_{i}^{k_{i}})^{\#}-T_{i}^{k_{i+1}}(T_{i}^{k_{i+1}})^{\#} \right)\cl H &=&\bigcap_{i\in A} \left(T_{i}^{k_{i}}(T_{i}^{k_{i}})^{\#}-T_{i}^{k_{i+1}}(T_{i}^{k_{i+1}})^{\#}\right)\cl H\\
&=&\bigcap _{i\in A}T_{i}^{k_{i}} (kerT_{i}^{*}).
\end{eqnarray*}

Further, note that for distinct $\bs k=(k_{1},\dots, k_{n})$ and $\bs \ell=(\ell_{1},\dots, \ell_{n})$ 
$\bigcap _{i\in A}T_{i}^{k_{i}} (kerT_{i}^{*})\perp \bigcap _{i\in A}T_{i}^{\ell_{i}} (kerT_{i}^{*}),$ and therefore, 
$$
\prod\limits_{i\in A}\left(T_{i}^{k_{i}}(T_{i}^{k_{i}})^{\#}-T_{i}^{k_{i+1}}(T_{i}^{k_{i+1}})^{\#} \right) \perp
\prod\limits_{i\in A}\left(T_{i}^{\ell_{i}}(T_{i}^{\ell_{i}})^{\#}-T_{i}^{\ell_{i+1}}(T_{i}^{\ell_{i+1}})^{\#} \right),
$$ 
whenever $(k_1, \dots, k_m)\ne (\ell_1,\dots, \ell_m)$. This means, each $P_r$ is a projection and 
\begin{eqnarray*}
P_r\cl H &=& \bigoplus\limits_{1\le k_i\le r} \bigcap _{i\in A}T_{i}^{k_{i}} (kerT_{i}^{*})\\
&=&\bigoplus\limits_{\bs k\in I_r} T_A^{\bs k} \cl W_A \ \ \ ({\rm using \ Lemma \ \ref{1 le intersection of T_{i}W_{i}}}).
\end{eqnarray*}

Moreover, $\{P_r\}$ is an increasing sequence, by definition; therefore,  
\begin{eqnarray}\label{eq2 wold sot}
\prod_{i\in A} P_{\mathcal{H}_{T_i,S}}\cl H &=& SOT-\!\lim\limits_{r \to \infty} P_{r}\cl H\nonumber\\
&=& \overline{\bigcup\limits_{r\ge 0} \bigoplus\limits_{\bs k\in I_r} T_A^{\bs k} \cl W_A}\nonumber\\
&=& \bigoplus_{\bs k\in\mathbb{N}_{0}^{|A|}}T_{A}^{\bs k}(\mathcal{W}_{A}).
\end{eqnarray}

Using (\ref{eq2 wold sot}) in (\ref{eq1 wold sot}), and applying then Lemma \ref{1 le projection onto shift and invertible part} and Lemma \ref{1 le tj and hatsh commutes with tak}, we obtain 
 \begin{eqnarray*}
 \mathcal{H}_{A} &=&\left(\prod_{j\in I_{n}\setminus A}P_{\mathcal{H}_{T_{j},I}}\right)\bigoplus\limits_{\bs k\in \bb N_0^{|A|}} T_A^{\bs k} \cl W_A\\
 &=&  \bigoplus\limits_{{\bs k}\in \mathbb{N}_{0}^{|A|}} T_A^{\bs k}\left(\prod_{j\in I_{n}\setminus A}P_{\mathcal{H}_{T_{j}},I}\cl W_A\right).
 \end{eqnarray*}
 
Finally, using the fact $\cl W_A$ reduces $T_j$ for each $j\in I_n\setminus A$, and that if $T_j$ reduces a subspace $\cl M$, then $P_{\cl H_{T_j, I}}\cl M=\bigcap\limits_{r\ge 0}T_j^r\cl M$, we can show that 
$$
\prod_{j\in I_{n}\setminus A}P_{\mathcal{H}_{T_{j}},I}\cl W_A=\bigcap\limits_{{\bs \ell}\in \bb N_0^{n- |A|}}T_{I_n\setminus A}^{\bs \ell}\cl W_A.
$$
Hence, 
$$
\cl{H}_{A} = \bigoplus\limits_{{\bs k}\in \bb N_0^{|A|}} T_A^{\bs k}\left(\bigcap\limits_{{\bs \ell}\in \bb N_0^{n-| A|}}T_{I_n\setminus A}^{\bs \ell}\cl W_A\right).
$$
This completes the proof.
\end{proof}

\section{Unitary equivalence of doubly twisted near-isometries}\label{uni-equi}
In this section, we classify doubly twisted near-isometries up to unitary equivalence (Theorem \ref{1 th uni equiv of dtni}). 

\begin{definition}
An $n$-tuple $T=(T_{1},T_{2},\dots,T_{n})$ of operators on $\mathcal{H}$ is said to be unitarily equivalent to an $n$-tuple $S=(S_{1},S_{2},\dots, S_{n})$ of operators on $\mathcal{K}$ if there exists a unitary $U:\mathcal{H}\to \mathcal{K}$ such that  $UT_{i}U^{*}=S_{i}$ for all $1\leq i\leq n.$ We denote this by $T\cong S$.
\end{definition}
 
In the case of doubly twisted near-isometries, there is also 
an associated family of unitaries, and it is natural to require that the unitary $U$ intertwine the two families of unitaries as well. However, in this case, no additional assumption is needed, since this intertwining property, as we prove below, follows automatically.  

\begin{lemma}
Let $T=(T_{1},\dots ,T_{n})$ on $\cl H$ and $\widetilde{T}=(\widetilde{T}_{1},\dots ,\widetilde{T}_{n})$ on $\widetilde{\cl H}$ be doubly twisted near-isometries with respect to unitaries  $\{U_{ij}\}_{1\leq i\neq j\leq n}$ on $\cl H$ and $\{\widetilde{U}_{ij}\}_{1\leq i\neq j\leq n}$ on $\mathcal{\widetilde{H}}$, respectively. Suppose 
$V:\mathcal{H}\to \mathcal{\widetilde{H}}$ be a unitary operator such that $VT_{i}=\widetilde{T}_{i}V$ for $1\le i\le n$, then $VU_{ij}=\widetilde{U}_{ij}V$ for all $1\leq i\neq j\leq n$.
\end{lemma}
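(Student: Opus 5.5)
The plan is to use the algebraic formula from Lemma \ref{ 1 le consequence of definiton of doubly twisted near-isometry}(iii), namely $U_{ij}=T_i^{\#}T_j^{\#}T_iT_j$. The same identity holds on $\widetilde{\cl H}$: $\widetilde U_{ij}=\widetilde T_i^{\#}\widetilde T_j^{\#}\widetilde T_i\widetilde T_j$. Once I know that $V$ intertwines the left inverses, i.e.\ $VT_k^{\#}=\widetilde T_k^{\#}V$ for each $k$, the conclusion follows by direct substitution:
$$
VU_{ij}=VT_i^{\#}T_j^{\#}T_iT_j=\widetilde T_i^{\#}\widetilde T_j^{\#}\widetilde T_i\widetilde T_j V=\widetilde U_{ij}V .
$$
For $i<j$ this is exactly the lemma. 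The case $i>j$ then follows by taking adjoints, since $U_{ji}=U_{ij}^*$ and $V$ is unitary.

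The main step is therefore to show $VT_k^{\#}=\widetilde T_k^{\#}V$, where $T_k^{\#}=(T_k^*T_k)^{-1}T_k^*$.

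1. Since $V$ is unitary, taking adjoints in $VT_k=\widetilde T_kV$ gives $T_k^*V^*=V^*\widetilde T_k^*$. Multiplying by $V$ on both sides yields $VT_k^*=\widetilde T_k^*V$.
2. Combining the two intertwining relations gives $V(T_k^*T_k)=(\widetilde T_k^*\widetilde T_k)V$.
3. Both $T_k^*T_k$ and $\widetilde T_k^*\widetilde T_k$ are invertible, because near-isometries are bounded below. So I can multiply the relation in step 2 on the left by $(\widetilde T_k^*\widetilde T_k)^{-1}$ and on the right by $(T_k^*T_k)^{-1}$. This gives $V(T_k^*T_k)^{-1}=(\widetilde T_k^*\widetilde T_k)^{-1}V$.
4. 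Composing step 3 with step 1 gives $VT_k^{\#}=\widetilde T_k^{\#}V$.

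No step here is a real obstacle. The only point needing care is this derivation of $VT_k^{\#}=\widetilde T_k^{\#}V$, in particular remembering that invertibility of $T_k^*T_k$ comes from the bounded-below condition. An alternative route avoids left inverses altogether: from $T_iT_j=U_{ij}T_jT_i$ we get $VU_{ij}T_jT_i=\widetilde U_{ij}V T_jT_i$, so $VU_{ij}$ and $\widetilde U_{ij}V$ agree on $\mathrm{Ran}(T_jT_i)$. That range need not be dense, so this approach alone is insufficient, which is why I use the left-inverse formula.
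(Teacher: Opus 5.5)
Your proposal is correct and follows essentially the same route as the paper: both intertwine the left inverses, $VT_k^{\#}=\widetilde T_k^{\#}V$, and then substitute into $U_{ij}=T_i^{\#}T_j^{\#}T_iT_j$. The paper states the intertwining of $T_k^{\#}$ without justification; your steps 1--4, and your remark that the case $i>j$ follows from $U_{ji}=U_{ij}^*$, are just the details it leaves implicit.
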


\begin{proof} Since $VT_{i}=\widetilde{T}_{i}V$, therefore $VT_i^{\#}={\widetilde{T_i}^{\#}}V$ for all i, where $T_i^{\#}=(T_i^*T_i)^{-1}T_i^*$ and 
$\widetilde{T}_i^{\#}=(\widetilde{T}_i^*\widetilde{T}_i)^{-1}\widetilde{T}_i^*$. Hence, 
\begin{equation*}
VU_{ij}=V(T_{i})^{\#}(T_{j})^{\#}T_{i}T_{j}=(\widetilde{T}_{i})^{\#}(\widetilde{T}_{j})^{\#}\widetilde{T}_{i}\widetilde{T}_{j}V = \widetilde{U}_{ij}V.    
\end{equation*}
\end{proof}

Before proceeding further, we first fix some notations. Recall that the Wold-type decomposition for a doubly twisted near-isometry $(T_1, \dots, T_n)$ on a Hilbert space $\cl H$ provides the orthogonal decomposition   
$\cl H=\bigoplus\limits_{A\subseteq I_n}\cl H_A$, where 
$$
\cl{H}_{A} = \bigoplus\limits_{{\bs k}\in \bb N_0} T_A^{\bs k}\left(\bigcap\limits_{{\bs \ell}\in \bb N_0^{I_n\setminus A}}T_{I_n\setminus A}^{\bs \ell}\cl W_A\right)
$$
with $\cl W_A=\bigcap\limits_{i\in A}kerT_i^*$, and with the convention that $\cl H_{\emptyset}=\bigcap\limits_{{\bs k}\in I_n}T_{I_A}^{\bs k}\cl H$.

For each $A\subseteq I_n$, we now fix notation  
 \begin{equation*}
\mathcal{D}_{A}(T)=\bigcap_{\bs \ell\in \mathbb{N}_{0}^{n-|A|} }T_{I_{n}\setminus A}^{\bs \ell}\mathcal{W}_{A},
 \end{equation*}
again with the understanding that $\cl D_{\emptyset}(T)=\cl H_{\emptyset}=\bigcap\limits_{{\bs k}\in I_n}T_{I_n}^{\bs k}\cl H.$ Thus, we can write  
\begin{equation}\label{wand}
\cl H_A=\bigoplus\limits_{{\bs k}\in \bb N_0^{|A|}} T_A^{\bs k}\cl D_A(T).
\end{equation}
When there is no ambiguity, we shall 
simply write $\cl D_A$ in place of $\cl D_A(T).$ The subspaces $\cl D_A$ were studied for tuples $T=(T_1, \dots, T_n)$ of isometries in \cite{jaydebmansirakshit2022} and \cite{pinto}, where they are referred to as $A$-wandering subspaces of $T$. We adopt this terminology in the setting of doubly twisted near-isometries as well. Since these subspaces play a vital role in our classification, we begin by recording some of their essential properties in our setting.

\begin{lemma}\label{reduce-wand}
Let $T=(T_{1},\dots,T_{n})$ be a doubly twisted near-isometry on $\mathcal{H}$. Then, for $A\subseteq I_n$, 
\begin{enumerate}
\renewcommand{\labelenumi}{(\roman{enumi})}
\item $\cl D_A$ reduces $T_j$ for each $j\in I_n\setminus A;$
\item $U_{ij}\cl D_A=\cl D_A$ for all $1\le i\ne j\le n$
\item $T_A^{*{\bs k}}T_{A}^{\bs k}\mathcal{D}_{A}=\mathcal{D}_{A}$ for all ${\bs k}\in \mathbb{N}_{0}^{|A|}$.
\end{enumerate}
\end{lemma}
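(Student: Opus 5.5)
The plan is to handle the three parts in order, using Lemma \ref{1 le properties of dtni on wa}, the twisted commutation relations, and the bijectivity of the $T_j$ on their invertible parts.

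For (i), fix $j\in I_n\setminus A$. By Lemma \ref{1 le properties of dtni on wa}, $\cl W_A$ reduces $T_j$ and $T_j|_{\cl W_A}$ is a near-isometry. The cleanest route is to identify $\cl D_A$ with the intersection, over $j\in I_n\setminus A$, of the invertible parts of the $T_j|_{\cl W_A}$. I will use the remark made at the end of the proof of Theorem \ref{1th wold decompostion dtni by SOT}: if $T_j$ reduces $\cl M$, then $P_{\cl H_{T_j,I}}\cl M=\bigcap_r T_j^r\cl M$. Alternatively, I can argue directly. Write $\cl D_A=\bigcap_{\bs\ell}T^{\bs\ell}_{I_n\setminus A}\cl W_A$.

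Invariance under $T_j$ goes as follows. For $x\in\cl D_A$ and any $\bs\ell$, write $x=T^{\bs\ell+e_j}_{I_n\setminus A}w$ with $w\in \cl W_A$. Then $T_jx=T^{\bs\ell}_{I_n\setminus A}(\text{unitary word in }U_{pq})T_jT_j^{\ell_j+1}\cdots w$. Here I reorder using $T_pT_q=U_{pq}T_qT_p$, and I use that $\cl W_A$ reduces the $U_{pq}$ and the $T_q$, $q\notin A$.

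Invariance under $T_j^*$ is the delicate direction. For $x=T_{I_n\setminus A}^{\bs\ell+e_j}w$, I move $T_j$ to the front via the twisted relations, so $x=T_j y$ with $y=T^{\bs\ell}_{I_n\setminus A}Vw$ for a unitary word $V$ in the $U_{pq}$. Then
\[T_j^*x=T_j^*T_jy.\]
Since $T_j$ commutes with $T_j^*T_j$, and $T_j^*T_j$ commutes with every $T_q$ ($q\ne j$) by the computation in Lemma \ref{1 le power and hatsh of T_{i} and T_{j}}, and also commutes with the $U_{pq}$, I get $T_j^*T_jy=T^{\bs\ell}_{I_n\setminus A}V\,T_j^*T_jw$. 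Here $T_j^*T_jw\in\cl W_A$ because $\cl W_A$ reduces $T_j$. As $\bs\ell$ is arbitrary, $T_j^*x\in\cl D_A$.

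For (ii), I commute $U_{ij}$ through $T^{\bs\ell}_{I_n\setminus A}$, using condition (ii) of Definition \ref{1 def of doubly twisted near-isometry}, and apply $U_{ij}\cl W_A=\cl W_A$ from Lemma \ref{1 le properties of dtni on wa}(iii). This gives $U_{ij}\cl D_A\subseteq\cl D_A$. Applying the same argument to $U_{ij}^*=U_{ji}$ gives the reverse inclusion.

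For (iii), I first treat a single $i\in A$. The claim is that $T_i^*T_i$ maps $\cl D_A$ onto itself. The operator $T_i^*T_i$ is invertible, and it commutes with every $T_q$ for $q\ne i$ and with every $U_{pq}$. It also maps $\ker T_p^*$ into itself for $p\ne i$, since $T_p^*T_i^*T_i=T_i^*T_iT_p^*$ by the twisted relations. What is not yet clear is that it maps $\ker T_i^*$ into itself. For this I will use the near-isometry condition (ii) with $n=0$ and $n=1$, namely $T_i^*T_i(\ker T_i^*)\perp T_i\cl H$, which gives $T_i^*T_i\ker T_i^*\subseteq\ker T_i^*$. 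Condition (ii) is stated for the action on $\cl H$, i.e. $T^{*n}T^{n+1}\cl H\subseteq T\cl H$, and with $n=1$ this yields exactly $T_i(\ker T_i^*)\perp T_i^2\cl H$, which is what I need. The same argument applied to $(T_i^*T_i)^{-1}$ would give equality. Here I need $(T_i^*T_i)^{-1}$ to preserve $\ker T_i^*$ as well. Restricted to the reducing-type structure of Theorem \ref{1 th analytic model of shift ni}, $T_i^*T_i$ acts blockwise on $\bigoplus_k T_i^k\ker T_i^*$, so an invertible operator preserving each closed block preserves it onto. I expect this "onto'' step to be the main obstacle and would justify it via this diagonal structure.

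Having the single-$i$ case, I iterate. Write $T_A^{*\bs k}T_A^{\bs k}$ as a product of the commuting factors $T_i^{*k_i}T_i^{k_i}$, after reordering with unitary words that preserve $\cl D_A$ by (ii). Each factor maps $\cl D_A$ onto $\cl D_A$ by repeating the single-$i$ argument for powers, using $T_i^{*k}T_i^{k}$ and condition (ii) of the definition of a near-isometry for general $n$.
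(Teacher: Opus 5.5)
\textbf{Gap in (i), the $T_j^*$-invariance.} Your treatment of (ii), and of $T_j$-invariance in (i), is fine and matches the paper. The gap is in the $T_j^*$ direction. There you assert that $T_j$ commutes with $T_j^*T_j$, which is false for near-isometries. That assertion says $T_j$ is quasinormal; it holds for isometries only because there $T_j^*T_j=I$. For example, the weighted shift $\widetilde{M}_z$ of Example~\ref{ex-wand} has $\widetilde{M}_z^*\widetilde{M}_z z^n=w_n^2z^n$ with $w_n$ strictly decreasing. Hence $\widetilde{M}_z(\widetilde{M}_z^*\widetilde{M}_z)z^n=w_n^3z^{n+1}\neq w_nw_{n+1}^2z^{n+1}=(\widetilde{M}_z^*\widetilde{M}_z)\widetilde{M}_zz^n$. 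So your identity $T_j^*T_jy=T^{\bs\ell}_{I_n\setminus A}V\,T_j^*T_jw$ is unjustified. $T_j^*T_j$ does pass $V$ and the factors $T_q^{\ell_q}$ with $q\neq j$, but not the factor $T_j^{\ell_j}$ inside $T^{\bs\ell}_{I_n\setminus A}$. This is the real obstruction in (i): the twisted relations only let $T_j^*$ move past $T_q$ with $q\neq j$. The paper's ``similar set of arguments'' does not address this point either.

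\textbf{How to repair (i).} The route you mention first and then abandon does work, once completed as follows.
For $q\in I_n\setminus A$, let $\cl K_q=\bigcap_{r\ge 0}T_q^r\cl W_A$. This is the invertible part of the near-isometry $T_q|_{\cl W_A}$ (Lemma~\ref{1 le properties of dtni on wa}).
It reduces $T_q$ by Theorem~\ref{1 wold decomposition for a near-isometry}.
It also reduces $T_p$ for every $p\in I_n\setminus A$ with $p\neq q$. This follows from $T_pT_q^r=U_{pq}^rT_q^rT_p$ and $T_p^*T_q^r=U_{pq}^{*r}T_q^rT_p^*$, together with the fact that $\cl W_A$ reduces $T_p$ and $U_{pq}$. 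Here the adjoint only has to pass powers of a \emph{different} operator, which the twisted relations allow.
Therefore $\cl K=\bigcap_{q\notin A}\cl K_q$ reduces every $T_q$ with $q\notin A$. Moreover, $T_q|_{\cl K}$ is invertible by Lemma~\ref{1 shift on reducing subspace}(iii) applied to $T_q|_{\cl K_q}$.
It follows that $\cl K=T^{\bs\ell}_{I_n\setminus A}\cl K\subseteq T^{\bs\ell}_{I_n\setminus A}\cl W_A$ for every $\bs\ell$, so $\cl K\subseteq\cl D_A$. The reverse inclusion is trivial. Hence $\cl D_A=\cl K$ reduces each $T_j$ with $j\notin A$.

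\textbf{Part (iii).} This part is correct and takes a genuinely different route from the paper. The paper uses that $\cl H_A=\bigoplus_{\bs k}T_A^{\bs k}\cl D_A$ reduces every $T_j$ (Theorem~\ref{1 th wold decom for dtni by induction}). It then shows $(T_A^{\bs k})^*T_A^{\bs k}\cl D_A\perp T_A^{\bs\ell}\cl D_A$ for $\bs\ell\neq\bs 0$. You instead factor $(T_A^{\bs k})^*T_A^{\bs k}$ into the commuting factors $T_{a_i}^{*k_i}T_{a_i}^{k_i}$. You then check directly that each factor
\begin{itemize}
\item preserves $\ker T_{a_i}^*$, using $T_{a_i}^{k}(\ker T_{a_i}^*)\perp T_{a_i}^{k+1}\cl H$;
\item preserves $\ker T_p^*$ for $p\neq a_i$;
\item commutes with $T^{\bs\ell}_{I_n\setminus A}$.
\end{itemize}
This is more elementary, since it uses no Wold decomposition, and it even shows that $\cl W_A$ itself is invariant.

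The ``onto'' step you flagged as the main obstacle is not one. A self-adjoint invertible operator that leaves the closed subspace $\cl D_A$ invariant is reduced by it, so its restriction to $\cl D_A$ is invertible. This is exactly how the paper finishes. The real obstacle was in (i).
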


\begin{proof} Let $A=\{a_1,\dots, a_m\}$ and $I_n\setminus A=\{q_1,\dots, q_{n-m}\}$. The proof works on similar lines if either $A$ or $I_n\setminus A$ is an empty set. Consider,
\begin{eqnarray*} 
T_{q_j}\cl D_A &=&T_{q_j} \left(\bigcap_{{\bs \ell} \in \bb{N}_{0}^{n-|A|} }T_{I_{n}\setminus A}^{\bs {\ell}}\cl {W}_{A}\right)\\
&=& \bigcap_{{\bs \ell}\in \bb{N}_{0}^{n-|A|}} T_{q_j}T_{I_n\setminus A}^{\bs {\ell}}\cl W_{A}\\
&=&\bigcap_{{\bs \ell}\in \bb N_0^{n-|A|}}T_{I_n\setminus A}^{\bs \ell}T_{q_j}U_{q_jq_1}^{\ell_1}\cdots U_{q_jq_{j-1}}^{\ell _{j-1}}U_{q_jq_{j+1}}^{\ell_{j+1}}\cdots U_{q_jq_{n-m}}^{\ell_{n-m}}\cl W_A\\
&\subseteq & \bigcap_{{\bs \ell} \in \bb{N}_{0}^{n-|A|} }T_{I_{n}\setminus A}^{\bs {\ell}}\cl {W}_{A} = \cl D_A, 
\end{eqnarray*}
since $\cl W_A$ is invariant under $T_{q_j}$ as well as each $U_{rs}$. In fact, $\cl W_A$ reduces both $T_j$ as well as each $U_{rs}$. Therefore, using similar set of arguments as above, we can also show that   
$$
T_{q_j}^*\cl D_A\subseteq \cl D_A.
$$
This proves $(i)$. Clearly,  $(ii)$ follows from the facts that $U_{rs}$ commutes with each $T_j$ and $U_{rs}\cl W_A=\cl W_A$.  

Lastly, we prove $(iii).$ First note that 
$\cl H_A$ reduces each $T_j$, which implies that $T_A^{*{\bs k}}T_A^{\bs k}\cl H_A\subseteq \cl H_A$ for every ${\bs k}\in \bb N_0^{|A|}.$ 
Then, in view of (\ref{wand}), $T_A^{*{\bs k}}T_A^{\bs k}\cl D_A\subseteq \cl D_A$ follows easily if we prove that $T_A^{*{\bs k}}T_A^{\bs k}\cl D_A$ is orthogonal $T_A^{\bs \ell}\cl D_A$ for all ${\bs \ell}\ne {\bs 0}\in \bb N_0^{|A|}.$ To this end,  let   
${\bs k}, {\bs \ell}\in \bb N_0^{|A|}, {\bs \ell}\ne {\bs 0}, \ \eta, \mu\in \cl D_A$, and consider  
\begin{eqnarray*}
\langle T_A^{*{\bs k}}T_A^{\bs k}\eta,T_A^{\bs \ell}\mu\rangle &=&\langle T_{A}^{\bs k}\eta, T_{A}^{\bs k}T_A^{\bs \ell}\mu\rangle\\
&=&   \langle T_A^{\bs k}\eta, T_{a_1}^{k_1+\ell_1}T_{a_2}^{k_2}\cdots T_{a_m}^{k_m}T_{a_2}^{\ell_2}\cdots T_{a_m}^{\ell_m}U_{a_2a_1}^{k_2\ell_1}\cdots U_{a_ma_1}^{k_m\ell_1}\mu\rangle\\
&=& \langle T_A^{\bs k}\eta , T_A^{\bs k+\bs \ell}R\mu\rangle, 
\end{eqnarray*}
where $R=(U_{a_2a_1}^{k_2\ell_1}\cdots U_{a_ma_1}^{k_m\ell_1})(U_{a_3a_2}^{k_3\ell_2}\cdots U_{a_ma_2}^{k_m\ell_2})\cdots (U_{a_m a_{m-1}}^{k_m\ell_{m-1}})$. Since $T_A^{\bs k}\cl D_A\perp T_A^{{\bs k}+{\bs \ell}}\cl D_A$ and $U_{ij}\cl D_A=\cl D_A$ for all $i\ne j$, therefore 
$$
\langle T_A^{*{\bs k}}T_A^{\bs k}\eta,T_A^{\bs \ell}\mu\rangle=0.
$$  

Thus, $T_A^{*{\bs k}}T_A^{\bs k}\cl D_A\subseteq \cl D_A$. Furthermore, $T_{A}^{*{\bs k}}T_{A}^{\bs k}$ is invertible and self adjoint; hence, 
$T_{A}^{*{\bs k}}T_{A}^{\bs k}\mathcal{D}_{A}=\mathcal{D}_{A}$.
\end{proof}

In the following result, we characterize unitarily equivalent doubly twisted near-isometries. The statement of Theorem \ref{1 th uni equiv of dtni} should be read in conjunction with 
Lemma \ref{reduce-wand}. 

\begin{theorem}\label{1 th uni equiv of dtni}
Let $T=(T_{1},\dots, T_{n})$ on $\cl H$ and $\widetilde{T}=(\widetilde{T}_1,\dots, \widetilde{T}_{n})$ on $\widetilde{\cl H}$ be doubly twisted near-isometries with associated unitaries $\{U_{ij}\}_{1\leq i\neq j\leq n}$ on $\cl H$ and $\{\widetilde{U}_{ij}\}_{1\leq i\neq j\leq n}$ on $\mathcal{\widetilde{H}}$, respectively. Then,  $T\cong \widetilde{T}$ if and only if for each $A\subseteq I_{n}$, there exists a unitary $V_{A}:\mathcal{D}_{A}(T)\to \mathcal{D}_{A}(\widetilde{T})$ such that on $\cl D_A(T)$ the following hold
\begin{enumerate}
\renewcommand{\labelenumi}{(\roman{enumi})}
 \item $V_{A}\bigl((T_{A}^{\bs k})^{*}T_{A}^{\bs k}\bigr)=\bigl((\widetilde{T}_{A}^{\bs k})^{*}\widetilde{T}_{A}^{\bs k}\bigr)V_{A}$;
 \item $V_{A}T_{j}=\widetilde{T}_{j}V_{A}$ for $j\in I_{n}\setminus A$;
 \item $V_{A}U_{ij}=\widetilde{U}_{ij}V_{A}$ for $1\le i\ne j\le n$.
\end{enumerate}
 \end{theorem}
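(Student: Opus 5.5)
For the forward direction, I will take the unitary $V:\cl H\to\widetilde{\cl H}$ with $VT_iV^*=\widetilde T_i$ and simply restrict it. The intertwining $VT_i=\widetilde T_iV$ gives $VT_i^*=\widetilde T_i^*V$. Hence $V(\ker T_i^*)=\ker\widetilde T_i^*$, and so $V\cl W_A(T)=\cl W_A(\widetilde T)$. Because $V$ is a bijection, it commutes with intersections and images, so $V\,T_{I_n\setminus A}^{\bs\ell}\cl W_A(T)=\widetilde T_{I_n\setminus A}^{\bs\ell}\cl W_A(\widetilde T)$, and therefore $V\cl D_A(T)=\cl D_A(\widetilde T)$. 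The case $A=\emptyset$ is the same with $\cl H$ in place of $\cl W_A$. Setting $V_A=V|_{\cl D_A(T)}$, condition (i) follows from $V T_A^{*\bs k}T_A^{\bs k}=\widetilde T_A^{*\bs k}\widetilde T_A^{\bs k}V$. Condition (ii) is immediate, and condition (iii) follows from the preceding lemma, which gives $VU_{ij}=\widetilde U_{ij}V$. All three restrictions make sense on $\cl D_A(T)$ by Lemma \ref{reduce-wand}.

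For the converse, I will build $V=\bigoplus_A V^{(A)}$ using the decomposition $\cl H_A=\bigoplus_{\bs k}T_A^{\bs k}\cl D_A$ from (\ref{wand}). On each block I define
$$V^{(A)}(T_A^{\bs k}\eta)=\widetilde T_A^{\bs k}V_A\eta,\qquad \eta\in\cl D_A(T).$$
\emph{Isometry.} For $\eta,\mu\in \cl D_A$ and a fixed $\bs k$, condition (i) gives
$$\langle \widetilde T_A^{\bs k}V_A\eta,\widetilde T_A^{\bs k}V_A\mu\rangle=\langle V_AT_A^{*\bs k}T_A^{\bs k}\eta,V_A\mu\rangle=\langle T_A^{\bs k}\eta,T_A^{\bs k}\mu\rangle.$$
Different $\bs k$ give orthogonal pieces on both sides, and the map on each piece is onto $\widetilde T_A^{\bs k}\cl D_A(\widetilde T)$. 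So $V^{(A)}$ extends to a unitary $\cl H_A(T)\to\cl H_A(\widetilde T)$, and $V$ is a unitary $\cl H\to\widetilde{\cl H}$. For $A=\emptyset$, I take $V^{(\emptyset)}=V_\emptyset$.

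\emph{Intertwining.} For $j\in A$, I compute $T_jT_A^{\bs k}\eta$ by moving $T_j$ into its ordered slot. This gives $T_A^{\bs k+e_j}R\eta$, where $R$ is a product of powers of the $U_{ij}$ depending only on $\bs k$ and $j$. The same word $\widetilde R$ appears for $\widetilde T$. Then
$$VT_jT_A^{\bs k}\eta=\widetilde T_A^{\bs k+e_j}V_AR\eta=\widetilde T_A^{\bs k+e_j}\widetilde RV_A\eta=\widetilde T_j\widetilde T_A^{\bs k}V_A\eta,$$
using (iii) together with $R\cl D_A=\cl D_A$ (Lemma \ref{reduce-wand}(ii)). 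For $j\notin A$, I commute $T_j$ past $T_A^{\bs k}$, which produces unitaries $U_{ja}$. Since $T_j\cl D_A\subseteq\cl D_A$ by Lemma \ref{reduce-wand}(i), conditions (ii) and (iii) give the identity. Because $V$ is unitary and maps each $\cl H_A$ onto $\widetilde{\cl H}_A$, intertwining on a dense spanning set gives $VT_j=\widetilde T_jV$ everywhere.

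The main obstacle is the isometry step. In the near-isometric setting $T_A^{\bs k}$ is not isometric, which is exactly why condition (i) is needed. One subtlety is that (i) is an identity on $\cl D_A$, and using it requires $T_A^{*\bs k}T_A^{\bs k}\cl D_A\subseteq\cl D_A$, which is Lemma \ref{reduce-wand}(iii). A second point needing care is that the twisting words $R$ and $\widetilde R$ match exactly, which holds because both families satisfy identical commutation relations with the same exponents.
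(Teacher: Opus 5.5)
Your proof is correct and follows the paper's plan. The forward direction restricts $V$ to $\cl D_A(T)$, and the converse assembles a blockwise unitary on $\cl H_A=\bigoplus_{\bs k}T_A^{\bs k}\cl D_A(T)$, checking the intertwining through the twist relations, Lemma \ref{reduce-wand}, and conditions (ii)--(iii) exactly as the paper does. The one local difference is that you define the block map directly by $T_A^{\bs k}\eta\mapsto \widetilde T_A^{\bs k}V_A\eta$ and obtain isometry from (i) with a single inner-product computation, whereas the paper builds the same map as $\widetilde\Lambda_{A,\bs k}V_A\Lambda_{A,\bs k}^*$ from polar decompositions and has to pass (i) to the square roots in (\ref{square-root}) to recover that formula, so your version is slightly more elementary.
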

 
\begin{proof}
Suppose $V:\mathcal{H}\to \mathcal{\widetilde{H}}$ be a unitary such that $VT_{i}=\widetilde{T}_{i}V$ for each $i$. It readily follows that $V\mathcal{D}_{A}(T)=\mathcal{D}_{A}(\widetilde{T})$. Denote $V_{A}=V|_{\mathcal{D}_{A}(T)}$. Then, $V_A:\cl D_A(T) \to \cl D_A(\widetilde{T})$ is a unitary. 

Next, Lemma \ref{reduce-wand} proves that, for $j\notin A,$ the subspaces $\cl D_A(T)$ and $\cl D_A(\widetilde{T})$ reduce $T_j$ and $\widetilde{T}_j$, respectively. Further, the lemma also asserts that $U_{ij}\cl D_A(T)=\cl D_A(T)$ and $\widetilde{U}_{ij}\cl D_A(\widetilde{T})=\cl D_A(\widetilde{T})$ for every $1\le i\ne j\le n$. Thus,  
$$
V_AT_j=\widetilde{T}_jV_A \ \  \ {\rm and} \ \ \ V_AU_{rs}=\widetilde{U}_{rs}V_A \ \ \ {\rm on} \ \cl D_A(T),
$$ 
for every $j\notin A$ and $1\le r\ne s\le n.$ This establishes $(ii)$ and $(iii)$. Now, to prove $(i)$, note that Lemma \ref{reduce-wand} also asserts that $\bigl((T_{A}^{\bs k})^{*}T_{A}^{\bs k}\bigr)\cl D_A(T)\subseteq \cl D_A(T)$. Consequently,  
$$
V_{A}\bigl((T_{A}^{\bs k})^{*}T_{A}^{\bs k}\bigr)(\eta)=V\bigl((T_{A}^{\bs k})^{*}T_{A}^{\bs k}\bigr)(\eta)=\bigl((\widetilde{T}_{A}^{\bs k})^{*}\widetilde{T}_{A}^{\bs k}\bigr)V(\eta)
= \bigl((\widetilde{T}_{A}^{\bs k})^{*}\widetilde{T}_{A}^{\bs k}\bigr)V_A(\eta)
$$
for all $\eta\in \cl D_A(T)$, which establishes $(i).$ 

For the converse, suppose, for each $A\subseteq I_n$, we have a unitary $V_A:\cl D_A(T)\to \cl D_A(\widetilde{T})$ that satisfies the conditions $(i), \ (ii)$ and $(iii)$.  
We must show that $T=(T_1, \dots, T_n)$ and $\widetilde{T} = (\widetilde{T}_1, \dots, \widetilde{T}_n)$ are unitarily equivalent.  

Firstly, since $(T_1, \dots, T_n)$ and $(\widetilde{T}_1, \dots, \widetilde{T}_n)$ are both doubly twisted near-isometries, they admit Wold-type decompositions, which means we have the decompositions  
$$
\cl H=\bigoplus\limits_{A\subseteq I_n}\cl H_A \ \ {\rm and} \ \ \widetilde{\cl H} = \bigoplus\limits_{A\subseteq I_n} \widetilde{\cl H}_A,
$$ 
where $\cl H_A=\bigoplus\limits_{{\bs k}\in \bb N_0^{|A|}} T_A^{\bs k}\cl D_A(T)$ and $\widetilde{\cl H}_A=\bigoplus\limits_{{\bs k}\in \bb N_0^{|A|}}\widetilde{T}_A^{\bs k}\cl D_A(\widetilde{T}).$

We shall construct the desired unitary from $\cl H$ onto $\widetilde{\cl H}$ by first defining unitaries between $\cl H_A$ and $\widetilde{\cl H}_A$, and then taking their orthogonal direct sum. For $A=\emptyset$, $\cl D_A(T)=\cl H_{\emptyset}$ and $\cl D_A(\widetilde{T})=\widetilde{\cl H}_{\emptyset}$. 
 Define $U_\emptyset = V_\emptyset$. Then, $U_\emptyset : \cl H_\emptyset \to \widetilde{\cl H}_\emptyset$ is a unitary, and by $(ii)$, 
\begin{equation}\label{intertwine4}
U_\emptyset T_j=\widetilde{T}_jU_\emptyset  \ \ \ {\rm on} \ \cl H_A
\end{equation}
for all $1\le j\le n.$

Now, suppose $A\ne \emptyset$, and let $\Lambda_{A, {\bs k}}$ and $\widetilde{\Lambda}_{A, {\bs k}}$ be the unitary operators appearing in the polar decompositions of the invertible 
operator $T_A^{\bs k}: \cl D_A(T)\to T_A^{\bs k}\cl D_A(T)$ and $\widetilde{T}_A^{\bs k}: \cl D_A(\widetilde{T}) \to \widetilde{T}_{A}^{k}\cl D_A(\widetilde{T})$, respectively, that is, 
\begin{equation}\label{polar}
T_A^{\bs k}= \Lambda_{A, {\bs k}}\left| T_A^{\bs k}|_{\cl D_A(T)} \right| \ {\rm and} \ \widetilde{T}_A^{\bs k}= 
\widetilde{\Lambda}_{A, {\bs k}}\left| \widetilde{T}_A^{\bs k}|_{\cl D_A(\widetilde{T})} \right|.
\end{equation}

Then, for each fixed $A\subseteq I_n$ and ${\bs k}\in \bb N_0^{|A|}$, 
$$
U_{A, {\bs k}} := \widetilde{\Lambda}_{A, {\bs k}} V_A \Lambda_{A, {\bs k}}^{*}
$$
defines a unitary from $T_A^{\bs k}\cl D_A(T)\to \widetilde{T}_A^{\bs k}\cl D_A(\widetilde{T})$; therefore, $U_A=\bigoplus\limits_{\bs k\in \bb N_0^{|A|}} U_{A, {\bs k}}$ gives a unitary from $\cl H_A\to \widetilde{\cl H}_A$. Note that $U_A=V_A$ on $\cl D_A(T)$. Finally, 
$$
U=\bigoplus\limits_{A\subseteq I_n} U_A
$$
is a unitary from $\cl H$ onto $\widetilde{\cl H},$ where $U_\emptyset = V_\emptyset.$ We claim that $U$ is the desired unitary. We now turn to proving that $UT_i=\widetilde{T}_iU$ for all $1\le i\le n.$ Fix $\emptyset \ne A\subseteq I_n$. Lemma \ref{reduce-wand} implies that $\cl D_A(T)$ is invariant under $(T_A^{\bs k})^{*}T_A^{\bs k}$, which implies that, for each $\eta\in \cl D_A(T)$, we have  
$$
\left| T_A^{\bs k}|_{\cl D_A(T)}\right|^2(\eta) = \left((T_A^{\bs k})^{*}T_A^{\bs k}\right)(\eta),
$$
which further, along with $(i)$, implies that 
\begin{equation}\label{square-root}
V_A \left| T_A^{\bs k}|_{\cl D_A(T)}\right| =  \left| \widetilde{T}_A^{\bs k}|_{\cl D_A(\widetilde{T})}\right| V_A. 
\end{equation}

Then, for any ${\bs k}\in \bb N_0^{|A|}$, we get   
\begin{eqnarray}
U_AT_A^{\bs k}(\eta) &=& U_{A, {\bs k}}T_A^{\bs k}(\eta)\nonumber\\
&=& \widetilde{\Lambda}_{A, {\bs k}} V_A \Lambda_{A, {\bs k}}^{*}T_A^{\bs k}(\eta)\nonumber\\
&=&  \widetilde{\Lambda}_{A, {\bs k}} V_A \left| T_A^{{\bs k}}|_{_{\cl D_A(T)}}\right|(\eta)\nonumber\\
& = &  \widetilde{\Lambda}_{A, {\bs k}} \left| \widetilde{T}_A^{{\bs k}}|_{_{\cl D_A(\widetilde{T})}}\right|V_A(\eta) \ \ ({\rm using} \ (\ref{square-root})) \nonumber\\
&=& \widetilde{T}_A^{\bs k} V_A(\eta) \label{intertwine}\\
&=& \widetilde{T}_A^{\bs k} U_A(\eta)\nonumber,
\end{eqnarray}
since $U_A=V_A$ on $\cl D_A(T).$ Now, let $A=\{a_1,\dots, a_m\}$. Then, for ${\bs k}\in \bb N_0^{|A|}$ and $\eta\in \cl D_A(T)$, we have 
\begin{eqnarray}
U_AT_{a_i}(T_A^k\eta ) &=& U_AT_A^{{\bs k}+e_i}U_{a_ia_1}^{k_1}\cdots U_{a_ia_{i-1}}^{k_{i-1}}(\eta)\nonumber\\
&=& \widetilde{T}_A^{{\bs k}+e_i}V_A U_{a_ia_1}^{k_1}\cdots U_{a_ia_{i-1}}^{k_{i-1}}(\eta)  \ \ \  \ ({\rm using} \ (\ref{intertwine}))\nonumber\\
&=& \widetilde{T}_{a_i}\widetilde{T}_A^{\bs k} \widetilde{U}_{a_1a_i}^{k_1}\cdots \widetilde{U}_{a_{i-1}a_i}^{k_{i-1}}V_AU_{a_ia_1}^{k_1}\cdots U_{a_ia_{i-1}}^{k_{i-1}}(\eta)\nonumber\\
& = & \widetilde{T}_{a_i}\widetilde{T}_A^{\bs k}V_A(\eta)  \ \ \ \ ({\rm using} \ (iii) \ {\rm and} \ U_{lr}=U_{rl}^*)\nonumber\\
&=& \widetilde{T}_{a_i} U_AT_A^{\bs k}(\eta) \ \ \  \ ({\rm using} \ (\ref{intertwine})) \label{intertwine1}.
\end{eqnarray}

Let $I_n\setminus A=\{q_1, \dots, q_{n-m}\}$. Then, for any ${\bs k}\in \bb N_0^{|A|}$ and $\eta\in \cl D_A(T)$, we have 
\begin{eqnarray}
U_AT_{q_j}T_A^{\bs k}(\eta) &=& U_AT_A^{\bs k}T_{q_j}U_{q_ja_1}^{k_1}\cdots U_{q_ja_m}^{k_m}(\eta)\nonumber\\
&=& \widetilde{T}_A^{\bs k}V_AT_{q_j}U_{q_ja_1}^{k_1}\cdots U_{q_ja_m}^{k_m}(\eta) \ \ \ \ ({\rm using} \ (\ref{intertwine}))\nonumber\\
&=& \widetilde{T}_A^{\bs k}\widetilde{T}_{q_j}V_AU_{q_ja_1}^{k_1}\cdots U_{q_ja_m}^{k_m}(\eta) \ \ \ \ ({\rm using} \ (ii))\nonumber\\
&=& \widetilde{T}_{q_j} \widetilde{T}_A^{\bs k}\widetilde{U}_{a_1q_j}^{k_1}\cdots \widetilde{U}_{a_mq_j}^{k_m}V_AU_{q_ja_1}^{k_1}\cdots U_{q_ja_m}^{k_m}(\eta)\nonumber\\
&=& \widetilde{T}_{q_j}\widetilde{T}_A^{\bs k}V_A(\eta)   \ \ \ \ ({\rm using} \ (iii) \ {\rm and} \ U_{lr}=U_{rl}^*)\nonumber\\
&=& \widetilde{T}_{q_j}U_AT_A^{\bs k}(\eta)  \ \ \  \ ({\rm using} \ (\ref{intertwine}))\label{intertwine2}. 
\end{eqnarray}
  
Hence, (\ref{intertwine1}) and (\ref{intertwine2}) together imply that, for every $\emptyset \ne A\subseteq I_n$, we have  
$$
U_AT_{q_j}=\widetilde{T}_{q_j}U_A  \ \ {\rm and} \ \  U_AT_{a_i}=\widetilde{T}_{a_i}U_A \ \ \ {\rm on} \ \cl H_A
$$
for all $a_i\in A$ and $q_j\in I_n\setminus A$, that is, 
\begin{equation}\label{intertwine3} 
U_AT_j=\widetilde{T}_j U_A
\end{equation}
for all $1\le j\le n$, whenever $A\ne \emptyset.$

Finally, combining (\ref{intertwine3}) and (\ref{intertwine4}), together with the fact each $\cl H_A$ reduces every $T_j$, we conclude that 
$$
UT_j=\widetilde{T}_j U  \ \ \ {\rm on} \ \cl H
$$ 
for every $1\le j\le n$. This completes the proof. 
\end{proof}
 
A classification of unitary equivalence for  doubly twisted isometries is given in \cite{jaydebmansirakshit2022}. The same paper defines the notion of $A$-wandering data  associated with an $A$-wandering subspace $\cl D_A(T)$, which forms the basis of this classification. 

Recall that for a doubly twisted isometry $T=(T_{1},T_{2},\dots ,T_{n})$ on $\cl H$ (and hence automatically a doubly twisted near-isometry) and a fixed 
subset $A\subseteq I_{n}$, the $A$-wandering subspace of $T$ is defined as  
\begin{equation*}
\mathcal{D}_{A}(T)=\bigcap_{\bs \ell \in \mathbb{N}_{0}^{n-|A|}}T_{I_{n}\setminus A}^{\bs \ell}\mathcal{W}_{A},  
\end{equation*}
where for $A=\phi$, we follow the convention that  
$$
\mathcal{D}_{A}(T)=\bigcap_{\bs \ell\in \mathbb{N}_{0}^{I_{n}}}T_{I_{n}}^{\bs \ell}\mathcal{H}.    
$$
Suppose we write $I_n\setminus A=\{q_1, \dots, q_{n-m}\}$ with $m=|A|$. The $A-$wandering data of $T$ is defined as 
$$
wd_{T}(A)=(I|_{\mathcal{D}_{A}(T)},T_{q_{1}}|_{\mathcal{D}_{A}(T)},\dots ,T_{q_{n-m}}|_{\mathcal{D}_{A}(T)}),
$$
a $n-m+1$-tuple of operators on $\cl D_A(T)$, where $I$ is the identity operator.  

We continue to use this same notion of a $A$-wandering data for a $A$-wandering subspace $\cl D_A(T)$ associated with a doubly twisted near-isometry. The following definition of unitary equivalence of wandering data is inspired from \cite{jaydebmansirakshit2022}, where it is introduced for doubly twisted 
isometries; here, we extend it to the setting of doubly twisted near-isometries. Of course, every doubly twisted isometry is a doubly twisted near-isometry; therefore, our definition reduces to theirs in the case of a doubly twisted isometry.

\begin{definition}
Let $T=(T_1,T_2,\dots T_n)$ and $\widetilde{T}=(\widetilde{T}_{1},\widetilde{T}_{2},\dots \widetilde{T}_{n})$ be doubly twisted near-isometries on $\mathcal{H}$ and $\widetilde{\mathcal{H}}$, respectively.
We call $wd_{T}(A)\cong wd_{\widetilde{T}}(A)$ if there exists a unitary operator $V_{A}:\mathcal{D}_{A}(T)\to \mathcal{D}_{A}(\widetilde{T})$ such that the following holds:
\begin{enumerate}
\item[(a)] $V_{A}T_{j}=\widetilde{T}_{j}V_{A}$ for $j\in I_{n}\setminus A$.
\item[(b)] $V_{A}U_{ij}=\widetilde{U}_{ij}V_{A}$ for $1\leq i\neq j\leq n$.
\end{enumerate}
\end{definition}    

We now recall the characterization of unitary equivalence for doubly twisted isometries from \cite{jaydebmansirakshit2022}. 

\begin{theorem}\label{wand-data}
Let $T=(T_1,T_2,\dots T_n)$ and $\widetilde{T}=(\widetilde{T}_{1},\widetilde{T}_{2},\dots \widetilde{T}_{n})$ be doubly twisted isometries on $\mathcal{H}$ and $\widetilde{\mathcal{H}}$, respectively. Then, $T\cong \widetilde{T}$ if and only if $wd_{T}(A)\cong wd_{\widetilde{T}}(A)$ for all $A\subseteq I_{n}$.
\end{theorem}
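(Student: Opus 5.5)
The plan is to deduce Theorem \ref{wand-data} from Theorem \ref{1 th uni equiv of dtni}. A doubly twisted isometry is a doubly twisted near-isometry: condition (iii) of Definition \ref{1 def of doubly twisted near-isometry} follows from (i) and (ii) by \cite[Lemma 3.1]{jaydebmansirakshit2022}. So the only thing to show is that, for isometries, conditions (i)--(iii) of Theorem \ref{1 th uni equiv of dtni} are equivalent to conditions (a)--(b) defining $wd_T(A)\cong wd_{\widetilde T}(A)$.

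For the forward implication, suppose $T\cong\widetilde T$. Theorem \ref{1 th uni equiv of dtni} produces, for each $A$, a unitary $V_A:\cl D_A(T)\to\cl D_A(\widetilde T)$ satisfying (ii) and (iii). These are exactly (a) and (b), so $wd_T(A)\cong wd_{\widetilde T}(A)$ for every $A$.

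For the converse, suppose that for each $A\subseteq I_n$ there is a unitary $V_A$ satisfying (a) and (b). Conditions (ii) and (iii) of Theorem \ref{1 th uni equiv of dtni} hold immediately. For condition (i), observe that each $T_{a_i}$ is an isometry, so $T_A^{\bs k}$ is a product of isometries and hence an isometry. Therefore $(T_A^{\bs k})^*T_A^{\bs k}=I_{\cl H}$, and in particular it restricts to the identity on $\cl D_A(T)$. The same holds for $\widetilde T$ on $\cl D_A(\widetilde T)$. Condition (i) then reduces to $V_A=V_A$, which is trivial. Theorem \ref{1 th uni equiv of dtni} now yields $T\cong\widetilde T$.

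No step is a genuine obstacle. The only points needing care are:
\begin{itemize}
\item[$\bullet$] justifying that the doubly twisted isometries of \cite{jaydebmansirakshit2022}, which are defined only by conditions (i) and (ii), fit the near-isometric definition, via the cited Lemma 3.1;
\item[$\bullet$] checking that the $A$-wandering subspaces $\cl D_A$ used in \cite{jaydebmansirakshit2022} agree with those in Theorem \ref{1 th uni equiv of dtni}, including the convention for $A=\emptyset$. This is true by the definitions recalled above.
\end{itemize}
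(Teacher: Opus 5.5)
Your proposal is correct and follows essentially the paper's own route. The paper also deduces Theorem \ref{wand-data} from Theorem \ref{1 th uni equiv of dtni}: for doubly twisted isometries $(T_A^{\bs k})^*T_A^{\bs k}=I$, so condition (i) is automatic and conditions (ii)--(iii) are exactly the equivalence of wandering data, with doubly twisted isometries fitting the near-isometric definition by the cited Lemma 3.1, as you note.
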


As a consequence of Theorem \ref{1 th uni equiv of dtni}, the above result follows by specializing to the isometric case. Indeed, if $T=(T_1,\dots,T_n)$ is a doubly twisted isometry, then $T_A^{\boldsymbol{k}}$ is an isometry for every nonempty $A\subseteq I_n$ and every $\boldsymbol{k}\in \mathbb{N}_0^{|A|}$. Consequently, $(T_A^{\boldsymbol{k}})^*T_A^{\boldsymbol{k}}=I$, and hence condition $(i)$ of Theorem \ref{1 th uni equiv of dtni} is automatically satisfied.

Moreover, conditions $(ii)$ and $(iii)$ of Theorem \ref{1 th uni equiv of dtni} reduce precisely to the requirement that $wd_{T}(A)\cong wd_{\widetilde{T}}(A)$ for all $A\subseteq I_n$. Therefore, in the special case of a doubly twisted isometry, our classification theorem (Theorem \ref{1 th uni equiv of dtni}) coincides with Theorem \ref{wand-data} of \cite{jaydebmansirakshit2022}.

However, for the general case of a doubly twisted near-isometry, Example \ref{ex-wand} illustrates that condition $(i)$ of Theorem \ref{1 th uni equiv of dtni} is essential, that is, $wd_{T}(A)\cong wd_{\widetilde{T}}(A)$ does not, in general, imply $T\cong \widetilde{T}.$ 

\begin{example}\label{ex-wand}
Let $\mathcal{H}=H^{2}(\bb D)\otimes H^2(\bb D)$, and let $\widetilde{M}_{z}$ be the weighted shift on $H^2(\bb D)$ with weights $w_n =\left(\frac{1}{3}+\frac{1}{3^{n+1}}\right)$.
Define 
$$
T_{1}=M_{z}\otimes I, \ \ T_{2}=I\otimes M_{z}, \ \ \widetilde{T}_{1}=\widetilde{M}_{z}\otimes I, \ \ {\rm and} \ \ \widetilde{T}_{2}=I\otimes \widetilde{M}_{z}. 
$$

Since $M_z$ is an isometry and $\widetilde{M}_{z}$ is a bounded contractions, it is straightforward to verify that $T_{1},T_{2}$ are isometries, and $ \widetilde{T}_{1}$, and $\widetilde{T}_{2}$ are bounded below contractions.  

Further, observe that $ker \widetilde{T}_{1}^{*}=\bb C\otimes H^2(\bb D)$. For each fixed $n\ge 0$, the subspace $\widetilde{T}_1^n(ker \widetilde{T}_1^*)$ is 
contained in the closed linear span of $\{z^n\otimes z^m: m\ge 0\}$, whereas $\widetilde{T}_1^{n+1}\cl H$ is in contained in the closed linear span of $\{z^{n+1}\otimes z^m: m\ge 0\}$. Consequently, 
$$\widetilde{T}_1^n(ker T_1^*) \perp \widetilde{T}_1^{n+1}\cl H.
$$ 
Hence, $\widetilde{T}_1$ is a near-isometry on $\cl H$. By similar argument $\widetilde{T}_{2},T_{1}, T_{2}$ are also near-isometries on $\cl H$. In addition, since 
$$
T_1T_2=T_2T_1, \  \ T_1^*T_2=T_2T_1^* \ \ \widetilde{T}_1\widetilde{T}_2=\widetilde{T}_2\widetilde{T}_1, \ {\rm and} \ \widetilde{T}_1^*\widetilde{T}_2=\widetilde{T}_2 \widetilde{T}_1^* 
$$
both $T=(T_1, T_2)$ and $\widetilde{T}=(\widetilde{T}_1,\widetilde{T}_{2})$ are doubly twisted near-isometries with respect to the identity operator on 
$\cl H$. 

Note that $||T_1||=1$ and $||\widetilde{T}_1||=2/3$; hence, $T\ncong \widetilde{T}.$ Nevertheless, we show that $wd_{T}(A)\cong wd_{\widetilde{T}}(A)$ for every 
$A\subseteq I_2.$ For this, we calculate the $A$-wandering subspace of $T$ and of $\widetilde{T}$ for every $A\subseteq \{1, 2\}$.  A direct computation shows that 

\begin{enumerate}
\renewcommand{\labelenumi}{(\roman{enumi})}
\item $\cl W_{\{1\}} = \widetilde{\mathcal{W}}_{\{1\}}=\bb C\otimes H^2(\bb D)$;
\item $\mathcal{W}_{\{2\}}=\widetilde{\mathcal{W}}_{\{2\}}=H^2(\bb D) \otimes \mathbb{C}$;
\item $\cl W_{\{1,2\}}=\mathcal{W}_{1}\cap \mathcal{W}_{2}=\mathbb{C}\otimes \mathbb{C}=\widetilde{\cl W}_{1}\cap \widetilde{\cl W}_{2}=\widetilde{\cl W}_{\{1,2\}}$.
\end{enumerate}

Then, 
\begin{eqnarray*}
\cl D_{\emptyset}(T) &=& \bigcap_{n, m\geq 0}(M_{z}^{n}\otimes M_z^m)(H^{2}(\bb D)\otimes H^2(\bb D)) =\{0\},\\
\cl D_{\{1\}}(T) &=& \bigcap_{n\geq 0}(I\otimes M_{z}^{n})(\bb C \otimes H^2(\bb D)) =\{0\},\\
\cl D_{\{2\}}(T) &=& \bigcap_{n\geq 0}(M_{z}^{n}\otimes I)(\bb C \otimes H^2(\bb D)) =\{0\},\\
\cl D_{\{1,2\}}(T) &=& \bb C \otimes \bb C.
\end{eqnarray*}
Smilarly, 
$$
\cl D_{\emptyset}(\widetilde{T})  =\{0\}, \ \ \cl D_{\{1\}}(\widetilde{T}) = \{0\}, \ \ \cl D_{\{2\}}(\widetilde{T}) =\{0\}, \ {\rm and} \ \cl D_{\{1,2\}}(\widetilde{T})= \bb C \otimes \bb C.
$$

Hence, it follows that $wd_{T}(\phi)\cong wd_{\widetilde{T}}(\phi)$ trivially for every proper subset $A$ of $\{1,2\}.$ Finally, 
$wd_{T}(\{1,2\})\cong wd_{\widetilde{T}}(\{1,2\})$ follows from the facts that $\cl D_{\{1,2\}}(T)=\bb C\otimes \bb C = \cl D_{\{1,2\}}(\widetilde{T})$, and 
$I_2\setminus A=\emptyset$.
\end{example}

\section{An analytic model for doubly twisted near-isometries }\label{doubly-ana}
In this section, we construct an analytic model for a doubly twisted near-isometry. 
In Theorem~\ref{1 th analytic model of shift ni}, an analytic model for the shift part of a near-isometry is obtained in terms of an operator-valued weighted shift operator acting on a vector-valued Hardy space over $\bb D$. Motivated by this construction, we now develop an analytic model for a doubly twisted near-isometry. Since we are dealing with a tuple of operators, this necessitates a multivariable analogue of the operator-valued weighted shift operator acting on a vector-valued Hardy space over a polydisc, namely, an operator-valued multishift. 
  
\begin{definition} Let $m$ be a positive integer and let $\{\Gamma_{\bs k, i}: {\bs k}\in \bb N_0^m, \ 1\le i\le m\}$ be a family of uniformly bounded operators on a Hilbert space $\cl H$. Then an operator-valued multishift on $H^2_{\cl H}(\bb D^m)$ is the tuple $(M_1, \dots, M_m)$ of operators defined by 
$$
M_i (z^{\bs k}\otimes \eta)=M_{z_i}(z^{\bs k})\otimes \Gamma_{{\bs k}, i}(\eta).
$$
\end{definition}

In what follows, we first construct a model for a doubly twisted near-isometry on $\cl H_A$ (as appearing in (\ref{wand})) using operator-valued multishifts and operator-valued diagonal operators (Theorem \ref{1 th unitary eq of dtni on ha}). We then glue these models together to obtain an analytic model for the doubly twisted near-isometry on the entire space (Theorem \ref{1 th unitary eq of dtni on h}).  

\begin{theorem}\label{1 th unitary eq of dtni on ha} 
Let $T=(T_{1},T_{2},\dots,T_{n})$ be a doubly twisted near-isometry on $\cl H$, and let $\mathcal{H}=\bigoplus\limits_{A\subseteq I_{n}}\mathcal{H}_{A}$ be the Wold-type decomposition of $\cl H$. Fix $\emptyset \ne A\subseteq I_{n}$. Then there exist an $n$-tuple $M_A=(M_{A,1}, \dots, M_{A,n})$ of operator on $H^2_{\cl D_A}(\bb D^{|A|})$ along with a unitary $U_{A}:\mathcal{H}_{A}\to H_{\mathcal{D}_{A}}^{2}(\mathbb{D}^{|A|})$ 
such that 
$$U_{A}T_{s}|_{\mathcal{H}_{A}}U_{A}^{*}=M_{A,s}
$$ 
for each $1\le s\le n$, where 
$$
M_{A,s}(z^{\bs k}\otimes \eta) =
\begin{cases}
M_{z_i}(z^{\bs k})\otimes \Lambda_{A, {\bs k}+e_i}^*T_{a_{i}}\Lambda_{A, {\bs k}}(\eta) & \text{if } \ s=a_{i}, 1\leq i\leq m \\
z^{\bs k}\otimes \Lambda_{A, {\bs k}}^*T_{q_{j}}\Lambda_{A, \bs k}\eta& \text{if } s=q_{j}, 1\leq j\leq n-m
\end{cases}
$$
and $\Lambda_{A, {\bs k}}: \cl D_A\to T_A^{\bs k}(\cl D_A)$ is a unitary for each ${\bs k}\in \bb N_0^{|A|}\}$.
\end{theorem}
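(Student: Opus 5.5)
We will imitate the one-variable construction of Theorem \ref{1 th analytic model of shift ni}, now using the multi-index decomposition (\ref{wand}), $\cl H_A=\bigoplus_{\bs k\in\bb N_0^{|A|}}T_A^{\bs k}\cl D_A$. Write $A=\{a_1<\dots<a_m\}$ and $I_n\setminus A=\{q_1,\dots,q_{n-m}\}$.

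\emph{Construction of the unitary.} For each $\bs k$, the operator $T_A^{\bs k}$ is bounded below, so $T_A^{\bs k}\colon\cl D_A\to T_A^{\bs k}\cl D_A$ is invertible. Its polar decomposition (\ref{polar}), $T_A^{\bs k}=\Lambda_{A,\bs k}\,|T_A^{\bs k}|_{\cl D_A}|$, gives a unitary $\Lambda_{A,\bs k}\colon\cl D_A\to T_A^{\bs k}\cl D_A$. Here $|T_A^{\bs k}|_{\cl D_A}|$ is a positive operator on $\cl D_A$, because Lemma \ref{reduce-wand}(iii) ensures $(T_A^{\bs k})^*T_A^{\bs k}\cl D_A=\cl D_A$.

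Define $U_A$ on each summand by $U_A(\xi)=z^{\bs k}\otimes\Lambda_{A,\bs k}^*\xi$ for $\xi\in T_A^{\bs k}\cl D_A$. The summands are mutually orthogonal, each $\Lambda_{A,\bs k}^*$ is unitary, and $\{z^{\bs k}\otimes\eta\}$ spans $H^2_{\cl D_A}(\bb D^{|A|})$. Hence $U_A$ is an isometry onto $H^2_{\cl D_A}(\bb D^{|A|})$, and $U_A^*(z^{\bs k}\otimes\eta)=\Lambda_{A,\bs k}\eta$. It remains to compute $U_AT_sU_A^*(z^{\bs k}\otimes\eta)=U_AT_s\Lambda_{A,\bs k}\eta$.

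\emph{Case $s=a_i$.} We need $T_{a_i}\Lambda_{A,\bs k}\eta\in T_A^{\bs k+e_i}\cl D_A$. Write $\Lambda_{A,\bs k}\eta=T_A^{\bs k}\zeta$ with $\zeta\in\cl D_A$. By the twisted commutation relations, $T_{a_i}T_A^{\bs k}=T_A^{\bs k+e_i}W$, where $W$ is a product of powers of the $U_{a_ia_r}$ with $r<i$. Lemma \ref{reduce-wand}(ii) gives $W\zeta\in\cl D_A$, so $T_{a_i}\Lambda_{A,\bs k}\eta\in T_A^{\bs k+e_i}\cl D_A$. Applying $U_A$ then yields $z^{\bs k+e_i}\otimes\Lambda_{A,\bs k+e_i}^*T_{a_i}\Lambda_{A,\bs k}\eta$, which is the stated formula.

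\emph{Case $s=q_j$.} Similarly $T_{q_j}T_A^{\bs k}=T_A^{\bs k}T_{q_j}W'$, where $W'$ is a product of powers of the $U_{q_ja_r}$. By Lemma \ref{reduce-wand}(i),(ii), $T_{q_j}W'\zeta\in\cl D_A$, so $T_{q_j}\Lambda_{A,\bs k}\eta\in T_A^{\bs k}\cl D_A$. Applying $U_A$ gives $z^{\bs k}\otimes\Lambda_{A,\bs k}^*T_{q_j}\Lambda_{A,\bs k}\eta$.

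Both computations hold on a spanning set and all operators involved are bounded, so the intertwining relations extend to all of $H^2_{\cl D_A}(\bb D^{|A|})$. Well-definedness of the $M_{A,s}$ as bounded operators is then automatic, since they are unitary conjugates of $T_s|_{\cl H_A}$; the restriction makes sense because $\cl H_A$ reduces every $T_s$.

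The main obstacle is the membership bookkeeping: checking that $T_s$ maps $T_A^{\bs k}\cl D_A$ into the correct summand. This is exactly where conditions (ii)–(iii) of Definition \ref{1 def of doubly twisted near-isometry}, via Lemma \ref{1 lemma to show wold for dtni by induction}(i), and the $U_{ij}$-invariance of $\cl D_A$ are needed. Everything else is formal.
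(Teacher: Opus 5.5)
Your proposal is correct and follows the paper's proof essentially step for step. You use the polar decomposition of $T_A^{\bs k}|_{\cl D_A}$ to get $\Lambda_{A,\bs k}$ and define the unitary $U_A(\xi)=z^{\bs k}\otimes\Lambda_{A,\bs k}^*\xi$ on $T_A^{\bs k}\cl D_A$. You then use the twisted relations together with the $T_{q_j}$- and $U_{rs}$-invariance of $\cl D_A$ to place $T_s\Lambda_{A,\bs k}\eta$ in the correct summand.

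One minor point: the modulus $\left|T_A^{\bs k}|_{\cl D_A}\right|$ is positive on $\cl D_A$ automatically. Lemma \ref{reduce-wand}(iii) is only needed to identify its square with $(T_A^{\bs k})^*T_A^{\bs k}$ restricted to $\cl D_A$, which this theorem does not use.
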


\begin{proof} Let $A\ne \emptyset$. For each fixed ${\bs k}\in \bb N_0^{|A|}$, $T_A^{\bs k}: \cl D_A \to T_A^{\bs k}\cl (\cl D_A)$ is invertible. Thus, its polar decomposition gives a unitary map 
$\Lambda_{A, \bs k}:  \cl D_A \to T_A^{\bs k}\cl (\cl D_A)$. This allows us to define a unitary operator $U_{A}:\mathcal{H}_{A}\to H_{\mathcal{D}_{A}}^{2}(\mathbb{D}^{|A|})$ by  
$$
U_{A}(T_{A}^{\bs k}\eta)=z^{\bs k}\otimes \Lambda_{A, \bs k}^*(T_{A}^{\bs k}\eta).
$$
for every $\eta\in \cl D_A$ and ${\bs k}\in \bb N_0^{|A|}.$ We shall show that $U_A$ is the required unitary. Let $A=\{a_1, \dots, a_m\}$ with $a_1<\cdots<a_m$.  Then, for $\eta\in \cl D_A$ and ${\bs k}\in \bb N_0^{|A|}$,  
\begin{eqnarray*}
 U_{A}T_{a_{i}}U_{A}^{*}(z^{\bs k}\otimes \eta) &=& U_{A}T_{a_{i}}\Lambda_{A, \bs k}(\eta)\\
    &=& U_AT_{a_i}T_A^{\bs k}(\mu) \ \ \ ({\rm taking} \ \Lambda_{A, \bs k}(\eta)=T_{A}^{\bs k}\mu \ {\rm for \ some} \ \mu\in \cl D_A)\\
    &=& U_{A}T_{a_{i}}(T_{a_{1}}^{k_{1}}\dots T_{a_{i-1}}^{k_{i-1}}T_{a_{i}}^{k_{i}}\dots T_{a_{m}}^{k_{m}}\mu)\\
  &=& U_{A}(T_{a_{1}}^{k_{1}}\dots T_{a_{i-1}}^{k_{i-1}}T_{a_{i}}^{k_{i}+1}\dots T_{a_{m}}^{k_{m}})(U_{a_{i}a_{1}}^{k_{1}}U_{a_{i}a_{2}}^{k_{2}}\dots U_{a_{i}a_{i-1}}^{k_{i-1}}\mu)\\
&=&z^{{\bs k}+e_i}\otimes \Lambda_{A, \bs k+e_{i}}^*T_A^{{\bs k}+e_i}(U_{a_{i}a_{1}}^{k_{1}}\dots U_{a_{i}a_{i-1}}^{k_{i}-1}\mu) \ {\rm as} \ U_{rs}\cl D_A\subseteq \cl D_A\\
& =& z^{{\bs k}+e_i}\otimes \Lambda_{A, \bs k+e_i}^* T_{a_{i}}T_{A}^{\bs k}\mu\\
& =& z^{{\bs k}+e_{i}}\otimes \Lambda_{A, \bs k+e_i}^*T_{a_{i}}\Lambda_{A, \bs k}\eta.
\end{eqnarray*}

Now, we shall compute $U_AT_jU^*_A$ for $j\in I_n\setminus A$. Let $I_n\setminus A=\{q_1, \dots, q_{n-m}\},$ and for ${\bs k}\in \bb N_0^{|A|}$ and $\eta\in \cl D_A$, consider 
\begin{eqnarray*}
U_AT_{q_j}U_A(z^{\bs k}\otimes \eta) &=& U_{A}T_{q_j}\Lambda_{A, \bs k}(\eta)\\
&=& U_AT_{q_j}T_A^{\bs k}(\mu) \ \ \ ({\rm taking} \ \Lambda_{A, \bs k}(\eta)=T_{A}^{\bs k}\mu \ {\rm for \ some} \ \mu\in \cl D_A)\\
&=& U_{A}T_A^{\bs k}T_{q_j}(U_{q_ja_{1}}^{k_{1}}U_{q_ja_{2}}^{k_{2}}\dots U_{q_ja_{m}}^{k_m}\mu) \\
&=&z^{{\bs k}}\otimes \Lambda_{A, \bs k}^*T_{q_j}T_A^{\bs k}\mu)\\
& =& z^{\bs k}\otimes \Lambda_{A, \bs k}^*T_{q_j}\Lambda_{A, \bs k}\eta,
\end{eqnarray*}
since $\cl D_A$ is invariant under $T_{q_j}$ and each $U_{rs}.$   

Hence,
$$
U_{A}T_{a_{i}}U_{A}^{*}(z^{\bs k}\otimes \eta) = M_{z_i}z^{\bs k}\otimes \Lambda_{A, \bs k+e_i}^*T_{a_{i}}\Lambda_{A, \bs k}\eta
$$
and 
$$
U_AT_{q_j}U_A(z^{\bs k}\otimes \eta)=z^{\bs k}\otimes \Lambda_{A, \bs k}^*T_{q_j}\Lambda_{A, \bs k}\eta
$$
for all $\eta \in \cl D_A$ and ${\bs k}\in \bb N_0^{|A|}.$ This completes the proof. 
\end{proof}

\begin{remark}
For $A=\emptyset,$ recall that by convention $\cl D_A=\cl H_\emptyset$, and in this case, for the sake of uniformity of notation, we shall continue to use 
the symbol $H^2_{\cl D_A}(\bb D^{|A|})$ throughout the rest of the paper to denote $\cl H_{\emptyset}$. We then set $M_{\emptyset, s}=T_s$ for all $1\le s\le n$. With this convention, the statement of Theorem \ref{1 th unitary eq of dtni on ha} remains valid for the case $A=\emptyset$, although in a trivial manner. 
\end{remark}

Before proceeding to construct a model for a doubly twisted near-isometry on the full space, we highlight some salient features of the weight operators appearing in the definition of the operators $M_{A, s}$ in Theorem \ref{1 th unitary eq of dtni on ha}. Let $\emptyset \ne A\subseteq I_n$ and write $A=\{a_1, \dots, a_m\}$ of $I_n$ with $a_1<\cdots <a_m$. For ${\bs k}\in \bb N_0^m$ and $1\le s\le n$, define  
$$
\Gamma^A_{{\bs k},s} =
\begin{cases}
\Lambda_{A, {\bs k}+e_i}^*T_{a_{i}}\Lambda_{A, {\bs k}} & \text{if } \ s=a_{i}, 1\leq i\leq m \\
\Lambda_{A, \bs k}^* T_{s}\Lambda_{A, \bs k} & \text{if } s\notin A
\end{cases},
$$
where the unitaries $\Lambda_{A, {\bs k}}$ are as defined in Theorem \ref{1 th unitary eq of dtni on ha}. It is straightforward to verify that   
each operator $\Gamma^A_{{\bs k}, s}$ is invertible on $\cl D_A$, and that there exists a constant $c>0$ such that 
$$ 
c ||\eta || \le ||\Gamma^A_{{\bs k}, s} \eta|| \le ||\eta||
$$
for all $\eta\in \cl D_A, \ {\bs k}\in \bb N_0^m$ and $1\le s\le n$. 

Consequently, the $m$-tuple $(M_{A, a_1}, \dots, M_{A, a_m})$ defines an operator-valued multishift on $H^2_{\cl D_A}(\bb D^m)$, while each operator 
$M_{A, s}$ with $s\notin A$ is an invertible block-diagonal operator. 

\begin{theorem}\label{1 th unitary eq of dtni on h}
Suppose $T=(T_{1}, \dots ,T_{n})$ is a doubly twisted near-isometry on $\mathcal{H}$. Then $T=(T_{1},\dots ,T_{n})$ on $\mathcal{H}$ is unitarily equivalent to $(M_{T,1},\dots,M_{T,n})$ on $\bigoplus_{A\subseteq I_{n}} H_{\mathcal{D}_{A}}^{2}(\mathbb{D}^{|A|}),$ where 
$$
M_{T,s}=\bigoplus\limits_{A\subseteq I_{n}}M_{A,s}, \ \ \ 1\le s\le n, 
$$ 
with $M_{A,s}$ defined as
$$
M_{A,s}(z^{\bs k}\otimes \eta) =
\begin{cases}
z^{k+e_{i}}\otimes \Lambda_{A, {\bs k}+e_{i}}^*T_{a_{i}}\Lambda_{A, \bs k}(\eta) & \text{if } s=a_{i},1\leq i\leq m \\
z^{k}\otimes \Lambda_{A, {\bs k}}^*T_{q_{j}}\Lambda_{A, \bs k}\eta& \text{if } s=q_{j}, 1\leq j\leq n-m
\end{cases}
$$
and $\Lambda_{A, {\bs k}}: \cl D_A\to T_A^{\bs k}(\cl D_A)$ a unitary for each ${\bs k}\in \bb N_0^{|A|}\}$.
\end{theorem}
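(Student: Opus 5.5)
The plan is to glue together the summand-wise models already constructed. By Theorem \ref{1 th wold decom for dtni by induction} (or Theorem \ref{1th wold decompostion dtni by SOT}), $T$ admits the Wold-type decomposition
$$\cl H=\bigoplus_{A\subseteq I_n}\cl H_A, \qquad \cl H_A=\bigoplus_{{\bs k}\in\bb N_0^{|A|}}T_A^{\bs k}\cl D_A .$$
Each $\cl H_A$ reduces every $T_s$. Hence, with respect to this decomposition, each $T_s$ is block diagonal:
$$T_s=\bigoplus_{A\subseteq I_n}T_s|_{\cl H_A}.$$
This block-diagonal structure is the only global input needed.

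For each nonempty $A$, I will invoke Theorem \ref{1 th unitary eq of dtni on ha}. It provides the unitary $U_A:\cl H_A\to H^2_{\cl D_A}(\bb D^{|A|})$, defined by
$$U_A(T_A^{\bs k}\eta)=z^{\bs k}\otimes\Lambda_{A,\bs k}^*T_A^{\bs k}\eta,$$
and it gives $U_AT_s|_{\cl H_A}U_A^*=M_{A,s}$, with $M_{A,s}$ exactly as displayed in the present statement. (Here $z^{k+e_i}$ is just $M_{z_i}z^{\bs k}$.) For $A=\emptyset$ I use the convention of the subsequent remark: $H^2_{\cl D_\emptyset}(\bb D^0)$ denotes $\cl H_\emptyset$, and I take $U_\emptyset=I_{\cl H_\emptyset}$ and $M_{\emptyset,s}=T_s|_{\cl H_\emptyset}$. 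The intertwining relation then holds trivially for $A=\emptyset$.

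Next I set $U=\bigoplus_{A\subseteq I_n}U_A$. This is a unitary from $\cl H=\bigoplus_A\cl H_A$ onto $\bigoplus_A H^2_{\cl D_A}(\bb D^{|A|})$, because it is an orthogonal direct sum of unitaries between corresponding orthogonal summands. For $x=\sum_A x_A$ with $x_A\in\cl H_A$, the reducing property gives $T_sx_A\in\cl H_A$. Therefore
$$UT_sx=\sum_A U_AT_sx_A=\sum_A M_{A,s}U_Ax_A=M_{T,s}Ux,$$
that is, $UT_sU^*=M_{T,s}$ for every $1\le s\le n$.

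The main point to check is that $M_{T,s}$ is a bounded operator, which requires the $M_{A,s}$ to be uniformly bounded over $A$. This is immediate: there are only $2^n$ summands, and each $M_{A,s}$ is unitarily equivalent to a restriction of the contraction $T_s$. I would also remark that the choice of the $\Lambda_{A,\bs k}$ (the polar-decomposition unitaries) is the one fixed in Theorem \ref{1 th unitary eq of dtni on ha}. Beyond this, no new obstacle arises; all the substantive work lies in the $\cl H_A$-level theorem and in the reducing property of the Wold summands.
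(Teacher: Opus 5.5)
Your proposal is correct and follows essentially the same route as the paper. Like the paper, you invoke the $\cl H_A$-level model (Theorem~\ref{1 th unitary eq of dtni on ha}, with the trivial convention for $A=\emptyset$), set $U=\bigoplus_{A\subseteq I_n}U_A$, and use that each $\cl H_A$ reduces every $T_s$ to get $UT_sU^*=\bigoplus_{A}M_{A,s}$; your remark that $M_{T,s}$ is bounded (only $2^n$ summands, each unitarily equivalent to a restriction of a contraction) is a harmless extra detail the paper leaves implicit.
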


\begin{proof}
 Fix $A\subseteq I_{n}$. From Theorem \ref{1 th unitary eq of dtni on ha}, there exists a unitary $U_{A}:\mathcal{H}_{A}\to H_{\mathcal{D}_{A}}^{2}(\mathbb{D}^{|A|})$ such that $U_{A}T_{s}|_{\mathcal{H}_{A}}U_{A}^{*}=M_{A,s}$ for all $s\in I_{n},$ where $M_{A, s}$ is of desired form. Then,
 $$
  U=\bigoplus_{A\subseteq I_{n}}U_{A}
 $$
 defines a unitary from $\bigoplus\limits_{A\subseteq I_{n}}\mathcal{H}_{A}$ onto $\bigoplus\limits_{A\subseteq I_{n}}H^{2}_{\mathcal{D}_{A}}(\mathbb{D}^{|A|}).$ Now, since $T_{s}=\bigoplus\limits_{A\subseteq I_{n}}T_{s}|_{\mathcal{H}_{A}}$, we obtain
\begin{equation*}
 U_AT_sU_A^{*}=\left(\bigoplus_{A\subseteq I_{n}}U_{A}\right) \left(\bigoplus_{A\subseteq I_{n}}T_{s}|_{\mathcal{H}_{A}}\right) \left(\bigoplus_{A\subseteq I_{n}}U_{A}\right)^{*}=\bigoplus_{A\subseteq I_{n}}U_{A}T_{s}|_{\mathcal{H}_{A}}U_{A}^{*}.
\end{equation*}
Hence, 
$$ 
U_AT_sU_A^{*}=\bigoplus_{A\subseteq I_{n}}M_{A,s},
$$
which completes the proof.
\end{proof}

Since every isometry is a near-isometry, Theorem \ref{1 th unitary eq of dtni on h}, in particular, gives an analytic model for a doubly twisted isometry. Interestingly,  our model for doubly twisted isometry coincides with the analytic model given by \cite[Theorem 4.4]{jaydebmansirakshit2022} as we explain below.

\begin{cor}
Suppose $T=(T_{1},T_{2},\dots ,T_{n})$ is a doubly twisted isometry on $\mathcal{H}$,
then $(T_{1},T_{2},\dots ,T_{n})$ on $\mathcal{H}$ is unitarily equivalent to $(M_{T,1},\dots ,M_{T,n})$ on $\bigoplus\limits_{A\subseteq I_{n}} H_{\mathcal{D}_{A}}^{2}(\mathbb{D}^{|A|}),$ where $M_{T,s}=\bigoplus\limits_{A\subseteq I_{n}}M_{A,s}$, where 
\begin{equation}\label{model-sar}
M_{A,s}(z^{k}\eta) =
\begin{cases}
z^{k+e_{i}}\otimes  U_{a_{i},a_{1}}^{k_{1}}\dots U_{a_{i}a_{i-1}}^{k_{i-1}}\eta.&\text{if }  s=a_{i}\in A\\
z^{k}\otimes U_{q_{j}a_{1}}^{k_{1}}\dots U_{q_{j}a_{m}}^{k_{m}}T_{q_{j}}(\eta)& \text{if } s=q_{j}, 1\leq j\leq n-m.
\end{cases}
\end{equation}
\end{cor}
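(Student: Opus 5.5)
Since a doubly twisted isometry is in particular a doubly twisted near-isometry, I would start from Theorem \ref{1 th unitary eq of dtni on h}. It already gives $T\cong (M_{T,1},\dots,M_{T,n})$ with
\[
M_{A,s}(z^{\bs k}\otimes\eta)=z^{\bs k+e_i}\otimes \Lambda_{A,\bs k+e_i}^*T_{a_i}\Lambda_{A,\bs k}\eta
\]
for $s=a_i\in A$, and the analogous diagonal formula for $s=q_j\notin A$. So all that remains is to compute the weights $\Gamma^A_{\bs k,s}$ explicitly in the isometric case.

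The key observation is that when every $T_i$ is an isometry, $T_A^{\bs k}$ is an isometry. Its restriction $T_A^{\bs k}|_{\cl D_A}$ then has modulus $|T_A^{\bs k}|_{\cl D_A}|=I_{\cl D_A}$, using Lemma \ref{reduce-wand}(iii) to identify the modulus with $(T_A^{\bs k})^*T_A^{\bs k}|_{\cl D_A}$. Hence the polar unitary satisfies $\Lambda_{A,\bs k}=T_A^{\bs k}|_{\cl D_A}$, and $\Lambda_{A,\bs k}^*$ acts on $T_A^{\bs k}\cl D_A$ as the inverse of $T_A^{\bs k}|_{\cl D_A}$.

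For $s=a_i$, I would use the twisted relations to write
\[
T_{a_i}T_A^{\bs k}=T_A^{\bs k+e_i}\,U_{a_ia_1}^{k_1}\cdots U_{a_ia_{i-1}}^{k_{i-1}},
\]
exactly as in the computation in the proof of Theorem \ref{1 th unitary eq of dtni on ha}. Since $U_{rs}\cl D_A=\cl D_A$ by Lemma \ref{reduce-wand}(ii), applying $\Lambda_{A,\bs k+e_i}^*=(T_A^{\bs k+e_i}|_{\cl D_A})^{-1}$ gives
\[
\Gamma^A_{\bs k,a_i}\eta=U_{a_ia_1}^{k_1}\cdots U_{a_ia_{i-1}}^{k_{i-1}}\eta .
\]
For $s=q_j\notin A$, I would commute $T_{q_j}$ past $T_A^{\bs k}$: $T_{q_j}T_A^{\bs k}=T_A^{\bs k}T_{q_j}U_{q_ja_1}^{k_1}\cdots U_{q_ja_m}^{k_m}$. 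Because $\cl D_A$ is invariant under $T_{q_j}$ and under the $U$'s (Lemma \ref{reduce-wand}(i),(ii)), this yields
\[
\Gamma^A_{\bs k,q_j}=T_{q_j}U_{q_ja_1}^{k_1}\cdots U_{q_ja_m}^{k_m}.
\]
Finally I would use that $T_{q_j}$ commutes with each $U_{rs}$ (Definition \ref{1 def of doubly twisted near-isometry}(ii)) to reorder this to $U_{q_ja_1}^{k_1}\cdots U_{q_ja_m}^{k_m}T_{q_j}$, which matches (\ref{model-sar}). The case $A=\emptyset$ is trivial by the convention in the remark following Theorem \ref{1 th unitary eq of dtni on ha}.

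The only delicate point is the identification $\Lambda_{A,\bs k}=T_A^{\bs k}|_{\cl D_A}$. This requires that the modulus be computed within $\cl D_A$, and that is precisely what Lemma \ref{reduce-wand}(iii) supplies. I would also check the bookkeeping of the ordering and the exponents of the $U$'s, being careful with the convention $U_{ji}=U_{ij}^*$. With the identification in hand, the corollary reduces to inserting these weights into Theorem \ref{1 th unitary eq of dtni on h}, and the resulting model visibly coincides with \cite[Theorem 4.4]{jaydebmansirakshit2022}.
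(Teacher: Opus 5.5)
Your proposal is correct and follows the paper's proof essentially step for step. You specialize Theorem \ref{1 th unitary eq of dtni on h}, identify $\Lambda_{A,\bs k}=T_A^{\bs k}|_{\cl D_A}$ by uniqueness of the polar decomposition (since $T_A^{\bs k}$ is an isometry), and then compute the weights from the twisted commutation relations together with the invariance of $\cl D_A$ under $T_{q_j}$ and the $U_{rs}$. Two small remarks: your final reordering via $T_{q_j}U_{rs}=U_{rs}T_{q_j}$ is a step the paper leaves implicit, and the appeal to Lemma \ref{reduce-wand}(iii) is harmless but unnecessary, because the restriction of an isometry to $\cl D_A$ already has modulus $I_{\cl D_A}$.
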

\begin{proof} Since $T$ is also a near-isometry, Theorem \ref{1 th unitary eq of dtni on h} yields that $T$ is unitarily equivalent to $(M_{T,1},\dots,M_{T,n})$ on $\bigoplus\limits_{A\subseteq I_{n}}H_{\mathcal{D}_{A}}^{2}(\mathbb{D}^{|A|}),$ where $M_{T,s}=\bigoplus\limits_{A\subseteq I_{n}}M_{A,s}$ and $M_{A,s}$ is 
defined as
\begin{equation}\label{model}
M_{A,s}(z^{\bs k}\otimes \eta) =
\begin{cases}
z^{{\bs k}+e_{i}}\otimes \Lambda_{A, {\bs k}+e_{i}}^*T_{a_{i}}\Lambda_{A, {\bs k}}(\eta) & \text{if } s=a_{i},1\leq i\leq m \\
z^{\bs k}\otimes \Lambda_{A, {\bs k}}^*T_{q_{j}}\Lambda_{A, {\bs k}}\eta& \text{if } s=q_{j}, 1\leq j\leq n-m
\end{cases}.
\end{equation}

It is evident that the two operator-valued multishifts given by (\ref{model-sar}) and (\ref{model}) differ only in the operator weights. We shall show that that our operator weights, for the case of doubly twisted isometry, coincide with the ones mentioned in (\ref{model-sar}).  To be precise, we shall show that 
$$
\Lambda_{A, {{\bs k}+e_i}}^*T_{a_i}\Lambda_{A, {\bs k}}= U_{a_ia_1}^{k_1}\cdots U_{a_{i}a_{i-1}}^{k_{i-1}} \ \ {\rm and} \ \ 
\Lambda_{A, {\bs k}+e_i}^*T_{q_j}\Lambda_{A, {\bs k}}=U_{q_{j}a_{1}}^{k_{1}}\cdots U_{q_{j}a_{m}}^{k_{m}}T_{q_{j}}
$$
for $a_i\in A$ and $q_j\in I_n\setminus A.$ To this end, recall that $\Lambda_{A, {\bs k}}:\cl D_A\to T_{A}^{\bs k}\cl D_A$ is the unitary operator obtained from 
the polar decomposition of the invertible operator $T_{A}^{\bs k}:\cl D_A\to T_A^{\bs k}\cl D_A$. However, each $T_i$ is an isometry. Thus, $T_A^{\bs k}$ is an isometry. Hence, 
by uniqueness of the polar decomposition, $\Lambda_{A, {\bs k}}=T_{A}^{\bs k}$ on $\cl D_A.$ Then, for $a_i\in A$, 
\begin{eqnarray*}
\Lambda_{A, {\bs k}+e_i}^*T_{a_i}\Lambda_{A, {\bs k}} &=&\Lambda_{A, {\bs k}+e_i}^*T_{a_i}T_A^{\bs k}\\
&=&\Lambda_{A, {\bs k}+e_i}^*T_A^{{\bs k}+e_i}U_{a_ia_1}^{k_1}\cdots U_{a_{i}a_{i-1}}^{k_{i-1}}\\
&=&U_{a_ia_1}^{k_1}\cdots U_{a_{i}a_{i-1}}^{k_{i-1}}.
\end{eqnarray*}
and, for $q_j\in I_n\setminus A$,  
\begin{eqnarray*}
\Lambda_{A, {\bs k}}^*T_{q_j}\Lambda_{A, {\bs k}} &=&\Lambda_{A, {\bs k}}^*T_{q_j}T_A^{\bs k}\\
&=&\Lambda_{A, {\bs k}}^*T_A^{{\bs k}}T_{q_j}U_{q_ja_1}^{k_1}\cdots U_{q_ja_m}^{k_m}\\
&=&T_{q_j}U_{q_ja_1}^{k_1}\cdots U_{q_ja_m}^{k_m}.
\end{eqnarray*}
This completes the proof.
\end{proof}

\subsection*{Acknowledgements} We thank the Mathematical Sciences Foundation, Delhi, for providing partial support and facilities necessary to carry out this work. The first-named author acknowledges partial financial support from the Shiv Nadar Institution of Eminence. The second-named author gratefully acknowledges the support of the Council of Scientific and Industrial Research (CSIR), India. The authors thank Professor Jaydeb Sarkar for suggesting the study of these problems in the present setting.

\end{document}